\documentclass[11pt]{amsart}
\usepackage{amsmath,amsthm,amsfonts,amssymb,latexsym,enumerate,url,hyperref,color} 
\usepackage[shortlabels]{enumitem}

\usepackage[top=30mm,right=30mm,bottom=30mm,left=30mm]{geometry}

\usepackage{tikz-cd}
\usetikzlibrary{cd}

\theoremstyle{plain}

\newtheorem{thm}{Theorem}[section]
\newtheorem{lem}[thm]{Lemma}

\newtheorem{prop}[thm]{Proposition}

\newtheorem{rem}[thm]{Remark}

\newtheorem*{thmA}{THEOREM A}
\newtheorem*{thmB}{THEOREM B}
\newtheorem*{thmC}{THEOREM C}

\theoremstyle{definition}

\numberwithin{equation}{section}

\newcommand{\Syl}{\operatorname{Syl}}

\newcommand{\Stab}{\operatorname{Stab}}

\newcommand{\Aut}{\operatorname{Aut}}

\newcommand{\Irr}{\operatorname{Irr}}
\newcommand{\SL}{\operatorname{SL}}
\newcommand{\FF}{\mathbb{F}}

\newcommand{\OC}{\mathcal{O}}

\newcommand{\GL}{\operatorname{GL}}
\newcommand{\GU}{\operatorname{GU}}
\newcommand{\GO}{\operatorname{GO}}
\newcommand{\SO}{\operatorname{SO}}
\newcommand{\CO}{\operatorname{CO}}
\newcommand{\Sp}{\operatorname{Sp}}
\newcommand{\PGL}{\operatorname{PGL}}
\newcommand{\PSL}{\operatorname{PSL}}
\newcommand{\SU}{\operatorname{SU}}
\newcommand{\PSU}{\operatorname{PSU}}
\newcommand{\PSp}{\operatorname{PSp}}
\newcommand{\PO}{\mathrm{P}\Omega}

\def\Q{{\mathbb Q}}
\def\irr#1{{\rm Irr}(#1)}

\def\cent#1#2{{\bf C}_{#1}(#2)}
\def\syl#1#2{{\rm Syl}_#1(#2)}
\def\nor{\unlhd}
\def\oh#1#2{{\bf O}_{#1}(#2)}

\def\det#1{{\rm det}(#1)}
\def\ker#1{{\rm ker}(#1)}
\def\norm#1#2{{\bf N}_{#1}(#2)}

\newcommand{\diag}{{\mathrm {diag}}}

\newcommand{\St}{{\sf {St}}}

\newcommand{\AAA}{{\sf A}}
\newcommand{\SSS}{{\sf S}}
\newcommand{\CC}{{\mathbb C}}
\newcommand{\CB}{{\mathbf C}}
\newcommand{\RR}{{\mathbb R}}
\newcommand{\QQ}{{\mathbb Q}}
\newcommand{\ZZ}{{\mathbb Z}}

\newcommand{\ZB}{{\mathbf Z}}

\newcommand{\GC}{{\mathcal G}}

\newcommand{\eps}{\epsilon}
\newcommand{\varep}{\varepsilon}
\newcommand{\al}{\alpha}

\newcommand{\tw}[1]{{}^#1\!}

\newcommand{\ppd}{\mathsf{ppd}}

\newcommand{\pb}{\boldsymbol{\beta}}
\newcommand{\pc}{\boldsymbol{\gamma}}
\newcommand{\pd}{\boldsymbol{\delta}}
\newcommand{\pphi}{\boldsymbol{\varphi}}
\newcommand{\ppsi}{\boldsymbol{\psi}}

\def\irr#1{{\rm Irr}(#1)}
\def\cent#1#2{{\bf C}_{#1}(#2)}

\def\syl#1#2{{\rm Syl}_#1(#2)}

\def\nor{\trianglelefteq\,}

\def\irr#1{{\rm Irr}(#1)}

\def\cent#1#2{{\bf C}_{#1}(#2)}

\def\syl#1#2{{\rm Syl}_#1(#2)}
\def\nor{\trianglelefteq\,}
\def\norm#1#2{{\bf N}_{#1}(#2)}
\def\oh#1#2{{\bf O}_{#1}(#2)}

\begin{document}

\renewcommand{\thefootnote}{\fnsymbol{footnote}}
\footnotesep6.5pt

\title[$p$-rational characters and Sylow $p$-subgroups]
{Degrees of $p$-rational characters and normality of Sylow $p$-subgroups}

\author[S. Dolfi]{Silvio Dolfi}
\address{Dipartimento di Matematica, Universit\`a di Firenze, 50134 Firenze, Italy}
\email{silvio.dolfi@unifi.it}

\author[P. H. Tiep]{Pham Huu Tiep}
\address{Department of Mathematics, Rutgers University, Piscataway, NJ 08854, USA}
\email{tiep@math.rutgers.edu}

\author[Yu Zeng]{Yu Zeng}
\address{Department of Mathematics, Suzhou University of Technology, No.99 South Third Ring
				   Road,
				   Changshu, Jiangsu, 215500, China}
\email{yuzeng2004@163.com}

\thanks{The first author gratefully acknowledges the support of INDAM-GNSAGA.
  The second author gratefully acknowledges the support of the NSF (grant
 DMS-2200850) and the Joshua Barlaz Chair in Mathematics.}
\thanks{It is a pleasure to thank Gabriel Navarro for helpful discussions.}

\keywords{Characters, Degrees, Sylow Subgroups}

\subjclass[2010]{Primary 20D20; Secondary 20C15}

\begin{abstract}
Several refinements of (the normality part of) the celebrated It\^o--Michler theorem were obtained during the last two decades, in which the condition of having 
$p'$-degree, for a fixed prime $p$, is imposed only on some subsets of complex irreducible characters of a finite group $G$. We prove further extensions of these
results, where this condition is now imposed on the irreducible characters which lie above the principal character of a Sylow $p$-subgroup and are either $p$-rational,
or strongly real when $p=2$.
\end{abstract}

\maketitle

\tableofcontents
 
\section{Introduction}
During the last two decades, several refinements of (the normality part of) the celebrated It\^o--Michler theorem were obtained, which characterize the normality of 
Sylow $p$-subgroups of a finite group $G$ for a fixed prime $p$, by imposing the condition of having 
$p'$-degree only on some subsets of complex irreducible characters of $G$.
In~\cite{MN} G. Malle and G. Navarro prove that the normality of Sylow $p$-subgroups $P$ of $G$ can be characterized by imposing this condition on the irreducible characters of $G$
lying over the principal character of $P$. 
In~\cite{NT}, a similar criterion is found using the degrees of the $p$-rational characters of $G$. When $p=2$, a criterion using the degrees of
real-valued characters is provided in \cite{DNT}. Also see \cite[Theorem C]{MN2} for another criterion of different kind. 

The following main theorems of this paper extend both the main results in~\cite{MN} and~\cite{NT}, respectively in \cite{MN} and \cite{DNT}.

\begin{thmA}
  	Let $G$ be a finite group, $p$ a prime number and let $P$ be a Sylow $p$-subgroup of $G$.
	Then the following conditions are equivalent.
	\begin{description}
		\item[(1)] Every $p$-rational irreducible constituent $\chi$ of $(1_P)^G$ has $p'$-degree.
		\item[(2)] $\chi(x)\neq 0$ for every $x\in P$ and every $p$-rational irreducible constituent $\chi$ of $(1_P)^G$.
		\item[(3)] $P$ is normal in $G$.
	\end{description}
      \end{thmA}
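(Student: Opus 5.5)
The easy implications come first. For (3)$\Rightarrow$(1),(2): if $P \nor G$, every constituent $\chi$ of $(1_P)^G$ has $P$ in its kernel. Indeed, $\chi_P$ contains $1_P$, and since $P$ is normal, Clifford's theorem makes $\chi_P$ a multiple of $1_P$. So $\chi$ is a character of the $p'$-group $G/P$, has $p'$-degree, and satisfies $\chi(x)=\chi(1)\ne 0$ for $x\in P$. For (2)$\Rightarrow$(1), the plan is the standard argument that a character of degree divisible by $p$ vanishes on some $p$-element. For $p$-rational $\chi$ this should follow from a Galois/central-character argument. Concretely, if $\chi(x)\ne0$ for all $x\in P$, then $\chi_P$ is a sum of characters whose values on $P$ are $p$-rational, hence rational, so the values $\chi(x)$ for $x \in P$ are rational integers. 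Then $\sum_{x\in P}|\chi(x)|^2/|P| = [\chi_P,\chi_P]$, combined with the congruence $\chi(x)\equiv \chi(1)\pmod p$ for $p$-elements, gives $\chi(x)\equiv \chi(1)\equiv 0 \pmod p$. Each $\chi(x)$ is then a nonzero multiple of $p$, so $|\chi(x)|^2\ge p^2$, and a counting/orthogonality contradiction follows, in the spirit of the known result that $p\mid\chi(1)$ forces a zero on $p$-elements. I expect that step is quoted from the literature (Malle--Navarro--Tiep type results), so I would cite it rather than reprove it.

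The main implication is (1)$\Rightarrow$(3). I would argue by minimal counterexample. First, the hypothesis is inherited by quotients: constituents of $(1_{PN/N})^{G/N}$ inflate to $p$-rational constituents of $(1_P)^G$. Hence if $N$ is a minimal normal subgroup, $PN/N \nor G/N$. If $N$ is a $p$-group, then $N\le P$ and $P\nor G$, a contradiction. If $N$ is a $p'$-group, then $PN\nor G$, and the Frattini argument gives $G=N\norm GP$. I would then take a nontrivial $\theta\in\irr N$ not invariant under $P$; such a $\theta$ exists, since otherwise $[N,P]=1$ by Glauberman correspondence and $P\nor G$. Choose $\theta$ rational-valued or $p$-rational, which is possible since $N$ is a $p'$-group. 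Inducing from the stabilizer (Glauberman/Isaacs correspondence for coprime actions) should produce a $p$-rational constituent of $(1_P)^G$ of degree divisible by $p$. Extra care is needed to ensure that the character lies over $1_P$; one can take the $P$-orbit sum of an orbit of length divisible by $p$ and choose a $P$-invariant, suitably extended character in $\irr{I_G(\theta)}$ over $1$.

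The remaining case, with $N$ a nonabelian minimal normal subgroup of order divisible by $p$ and unique, is where the classification enters. I would reduce to $N=S^k$ with $S$ simple of order divisible by $p$. There we need, for each such $S$ and each almost simple overgroup, a $p$-rational irreducible character of $S$ of degree divisible by $p$ that lies over $1_{P\cap S}$ and extends to (or is invariant in) its stabilizer in a $p$-rational way. One would then build the character of $G$ by tensor induction. This step is the main obstacle. I would first try Steinberg-type or unipotent characters for groups of Lie type in defining characteristic, using the fact that $\St$ is rational and restricts to $P$ with a trivial constituent. For cross characteristic, alternating and sporadic groups, I would use the lists from \cite{MN} and \cite{NT}, verifying rationality and the trivial-$P$-constituent property case by case.
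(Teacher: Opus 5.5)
The gap is in the step you treat as easy, (2)$\Rightarrow$(1). It rests on a character-by-character claim: a $p$-rational $\chi$ lying over $1_P$ with $p\mid\chi(1)$ must vanish somewhere on $P$. What you actually derive is correct: $\chi(x)\in p\ZZ$ for all $x\in P$. But if none of these values is $0$, you only get $[\chi_P,\chi_P]\ge p^2$, which contradicts nothing. There is no result to quote, because the claim is false.

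Take $G=\PSL_2(11)$, $p=2$, $P\cong \mathsf{C}_2\times\mathsf{C}_2$; all involutions are conjugate. The two rational characters of degree $10$ take the values $-2$ and $2$ on involutions, so they never vanish on $P$. Also $[\chi_P,1_P]=(10\mp 6)/4\in\{1,4\}$, so both are constituents of $(1_P)^G$. Theorem A survives only because the $2$-rational characters of degree $12$ lie over $1_P$ and vanish on involutions. This is why the equivalence of (1) and (2) was a conjecture of Navarro: it is only obtained by passing through (3).

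So your scheme yields (1)$\Leftrightarrow$(3) and (3)$\Rightarrow$(2), but nothing that starts from (2). The congruence $\chi(x)\equiv\chi(1)\pmod{pR}$ you wrote down should be used the other way, giving (1)$\Rightarrow$(2) as in the paper. The real content is then (2)$\Rightarrow$(3), and there your minimal-counterexample constructions are too weak. In every case you must produce a $p$-rational constituent of $(1_P)^G$ that \emph{vanishes} on a nontrivial element of $P$, not merely one of degree divisible by $p$. The paper does this case by case:

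\begin{enumerate}[(a)]
\item For $N$ abelian (a $p'$-group since $\oh pG=1$), Brodkey's theorem gives $\lambda\in\irr N$ with $I_M(\lambda)=N$ for a chief factor $M/N\le NP/N$. Hence $\lambda^M$ vanishes on $M\smallsetminus N$. The extension of $\lambda$ of $p'$-determinantal order has kernel $LP_0$ normal in $I_G(\lambda)$, which makes the induced $\chi$ both $p$-rational and over $1_P$.
\item For nonabelian $p'$-groups $N$, a set-stabilizer argument plus two regular $A/S$-orbits on $\irr S$ gives $\theta$ with $I_{NP}(\theta)=N$. Then $\theta^{NP}$ has $p$-defect zero, and Lemma~\ref{Lprat} supplies $\chi$.
\item For $p\mid |N|$ one needs Theorem~\ref{simple1}. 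This gives a character $\alpha$ of $S$ that either has $p$-defect zero with a $p$-rational extension over $1_P$ to its inertia group, or vanishes at a fixed $p$-element $x$ together with \emph{all} its $\Aut(S)$-conjugates. The second alternative is essential, because $\chi_N$ is a sum of conjugates $\alpha_1^{\varsigma g_i}\times 1$ and one evaluates at $\prod_i x^{g_i}$.
\end{enumerate}

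Your proposed simple-group input (degree divisible by $p$ and lying over $1_{P\cap S}$) does not deliver this.
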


\begin{thmB}
  	Let $G$ be a finite group and let $P$ be a Sylow $2$-subgroup of $G$.
	Then the following conditions are equivalent.
	\begin{description}
		\item[(1)] Every strongly real  irreducible constituent $\chi$ of $(1_P)^G$ has odd degree.
		\item[(2)] $\chi(x)\neq 0$ for every $x\in P$ and every strongly real irreducible constituent $\chi$ of $(1_P)^G$.
		\item[(3)] $P$ is normal in $G$.
	\end{description}
      \end{thmB}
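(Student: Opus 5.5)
The implications (3)$\Rightarrow$(1) and (3)$\Rightarrow$(2) are immediate. If $P\nor G$, then every irreducible constituent $\chi$ of $(1_P)^G$ has $P\subseteq\ker\chi$ by Clifford's theorem, since $\chi_P$ contains $1_P$ and $P$ is normal. So $\chi$ is inflated from the group $G/P$ of odd order, which gives $\chi(1)$ odd and $\chi(x)=\chi(1)\neq 0$ for all $x\in P$. I would not try to prove (2)$\Rightarrow$(1) directly. Instead I would prove (1)$\Rightarrow$(3) and (2)$\Rightarrow$(3) simultaneously, by a minimal counterexample argument in which the hypothesis (H) on $G$ is either (1) or (2).

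\emph{Step 1: passing to quotients.} Let $N\nor G$. The irreducible constituents of $(1_{PN/N})^{G/N}$, inflated to $G$, are exactly the constituents of $(1_P)^G$ that contain $N$ in their kernel. Inflation preserves strong reality, because a representation realized over $\RR$ stays real. It also preserves degrees and values on $P$. Hence (H) is inherited by $G/N$.

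\emph{Step 2: reduction to a unique minimal normal subgroup.} Let $G$ be a minimal counterexample. By Step 1 and minimality, $PN\nor G$ for every nontrivial normal subgroup $N$. If $G$ had two distinct minimal normal subgroups $N_1,N_2$, then $G$ would embed in $G/N_1\times G/N_2$. The Sylow $2$-subgroups of both factors are normal, so $P$ would be normal, a contradiction. Hence $G$ has a unique minimal normal subgroup $N$, with $PN\nor G$ and $P$ not normal in $G$. If $N$ were a $2$-group, then $N\le P$ and $P=PN\nor G$, so $N$ is not a $2$-group.

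\emph{Step 3: $N$ elementary abelian of odd order.} Here the Frattini argument gives $G=N\,\norm GP$. Since $\oh 2G=1$, $P$ acts nontrivially on $N$. The aim is to produce a strongly real constituent of $(1_P)^G$ lying over a nontrivial $\lambda\in\irr N$ that has even degree, or that vanishes on some element of $P$. Nontrivial characters of $N$ are never real, so such a $\chi$ must lie over an orbit containing $\lambda$ together with $\bar\lambda$. I would choose an involution $t\in P$ inverting some element of $N$, which exists by a Glauberman-type argument as in~\cite{DNT}. I would then consider $\lambda$ with $\lambda^t=\bar\lambda$, and work in the inertia group via Gallagher and Clifford correspondence. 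For the desired $\chi$, the orbit length, which is a multiple of $|P:P_\lambda|>1$, either forces even degree or, through the induction formula, forces vanishing on elements of $P$ outside the conjugates of $P_\lambda$. Strong reality would be verified via the Frobenius--Schur indicator, by producing the constituent inside a permutation-like induced character from $\lambda$ and $\bar\lambda$.

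\emph{Step 4: $N$ nonabelian, the main obstacle.} Write $N=S_1\times\cdots\times S_k$ with $S_i\cong S$ nonabelian simple. I would reduce, via a tensor-induction/extension argument as in~\cite{MN} and~\cite{NT}, to the following statement about $S$. \emph{For each nonabelian simple group $S$, there is $\theta\in\irr S$ of even degree, and vanishing on some $2$-element, such that $\theta$ lies over $1_{P_S}$ for $P_S\in\syl 2S$, $\theta$ is $\Aut(S)$-invariant (or at least $\Aut(S)_{P_S}$-invariant), $\theta$ extends to a strongly real character of its inertia group in $\Aut(S)$, and this extension still lies over the trivial character of a Sylow $2$-subgroup.} Then $\theta\times\cdots\times\theta$ extends to $G$ as a strongly real constituent of $(1_P)^G$, which violates (H). Verifying this needs the classification. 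For groups of Lie type in characteristic $2$, I would use the Steinberg character: it is rational, has indicator $+1$, extends to $\Aut(S)$, is a constituent of $(1_B)^S$, and vanishes off semisimple elements. For alternating groups I would use suitable rational hook characters. For odd characteristic and the sporadic groups I would use unipotent characters in the principal series, which are rational with indicator $1$ (Lusztig, Ohmori), and check the remaining cases with \textsf{GAP}. This last verification, in particular controlling strong reality of the extensions to $\Aut(S)$, is the step I expect to be hardest.
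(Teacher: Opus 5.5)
Your reduction to a unique minimal normal subgroup $N$ with $NP\nor G$ matches the paper. However, both substantive cases have genuine gaps.

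\textbf{The abelian case (Step 3).} Since (1)$\Rightarrow$(2) is immediate ($\chi(x)\equiv\chi(1)$ modulo $2R$ for $x\in P$), the only implication that needs work is (2)$\Rightarrow$(3). So you must produce a strongly real constituent of $(1_P)^G$ that \emph{vanishes} on some element of $P$; even degree contradicts nothing.

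Choosing $\lambda$ with $\lambda^t=\bar\lambda$ does give even degree. Indeed, because $NP\nor G$ and $N$ acts trivially on $\irr N$, one has $|G:I_G(\lambda)|_2=|P:P_\lambda|>1$. But it does not give vanishing. The induction formula only kills elements of $P$ that fix no $G$-conjugate of $\lambda$, and there may be none.

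For example, take $G=\FF_3^3\rtimes\AAA_4$, with $\AAA_4$ acting by signed permutation matrices of determinant $1$. Let $P$ be the diagonal Klein four-group, $t=\diag(1,-1,-1)$, and $\lambda$ the character corresponding to $(0,1,0)$. Then:
\begin{itemize}
\item $\lambda^t=\bar\lambda$, and $I_G(\lambda)=N\langle s\rangle$ with $s=\diag(-1,1,-1)$;
\item every involution of $P$ is $G$-conjugate to $s$;
\item both irreducible characters of $G$ over $\lambda$ take the values $\pm2$ on the involutions of $P$.
\end{itemize}

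The missing idea is the paper's construction. Pass to a chief factor $K/N$ with $K\le NP$, and pick $\nu\in\irr N$ with $I_K(\nu)=N$ and $\nu^y=\bar\nu$ for some $y\in K$. This is a regular orbit that is also real, and obtaining it needs \cite[Corollary 3.4]{I2} together with \cite[Theorem 2.5]{MT1}. The argument then runs as follows:
\begin{itemize}
\item $\tau=\nu^K$ is a real character of $2$-defect zero. By Lemmas~\ref{triv}(i) and \ref{Ldef0}, it is strongly real and $[\tau_Q,1_Q]$ is odd.
\item Lemma~\ref{Lgab}(iii), induction to $NP$, and the odd-index result \cite[Lemma 2.1(ii)]{MT1} give a strongly real $\chi$ over $1_P$.
\item Every $G$-conjugate of $\tau$ has $2$-defect zero in $K$, so $\chi$ vanishes on $P\cap K\neq1$.
\end{itemize}
Note also that you never actually construct a strongly real constituent of $(1_P)^G$. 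Appealing to a permutation-like induced character is not an argument. It is precisely the $2$-defect-zero and odd-multiplicity mechanism that makes both strong reality and lying over $1_P$ work.

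\textbf{The non-abelian case (Step 4).} Using $\theta\times\cdots\times\theta$ forces you to require an $\Aut(S)$-invariant $\theta$. Its extension to $\Aut(S)$, including odd-order outer automorphisms, must be strongly real and lie over the trivial character of a Sylow $2$-subgroup. This is strictly stronger than what the paper proves. Beyond a plausible sketch in characteristic $2$, you do not establish it for any family.

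The paper avoids both invariance and extension to $G$. It takes $\theta=\alpha_1\times1\times\cdots\times1$ and observes that $I_G(\theta)=I_D(\alpha)$. It then induces a strongly real character of $I_D(\alpha)$ that merely \emph{lies over} $\alpha$ and $1_Q$ (Proposition~\ref{cor, simple2}). Vanishing comes from requiring $\alpha^\varsigma(x)=0$ for all $\varsigma\in\Aut(S)$ at a single $2$-element $x$.

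This flexibility is what allows alternative (ii) of Theorem~\ref{simple2}: a real $2$-defect-zero character of $S$, which need not be $\Aut(S)$-invariant. That alternative covers characteristic $2$ and most groups of Lie type in odd characteristic.

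The remaining cases are $\PSL_n(q)$ and $\PSU_n(q)$ with $n\ge3$, and $\PO^\pm_{2n}(q)$ with $n\ge5$ odd, all with $q$ odd. They require three things your sketch does not supply:
\begin{itemize}
\item specific unipotent characters, chosen according to $n$ modulo $4$;
\item explicit vanishing computations at chosen $2$-elements, via permutation characters and dual pairs;
\item Brauer-character multiplicity-one arguments (Lemma~\ref{ext}(iii)) to get over $1_P$.
\end{itemize}
The phrase ``principal-series unipotent characters with indicator $1$'' provides none of this. As written, Step 4 assumes the core of the theorem rather than proving it.
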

      
Part of Theorem A (namely the equivalence of the conditions {\bf A(1)} and {\bf A(2)}), and 
part of Theorem B (namely the equivalence of the conditions {\bf B(1)} and {\bf B(2)}, with real characters instead of strongly real characters) were independently 
conjectured by Navarro, see \cite[Conjecture 5.8]{N2}. 
As one may expect, our proofs follow the same strategy of \cite{DNT}, \cite{MN}, \cite{NT}, reducing the statements to 
a condition on almost simple groups, see Theorems \ref{simple1} and \ref{simple2}. The verification of the latter condition is however more involved, and 
relies heavily on the Deligne--Lusztig theory.

One may ask whether one can have some strengthenings of Theorem A, respectively Theorem B, where in {\bf A(1)} or {\bf A(2)}, respectively
in {\bf B(1)} or {\bf B(2)}, the condition is imposed only on the constituents $\chi$ of $(1_P)^G$ {\it that occur with multiplicity coprime to $p$} -- such a strengthening 
of the main result of \cite{MN} was obtained in \cite{GLLV}. As we show in 
Remark \ref{p23}, both of these strengthenings of Theorem A (at least for $p=3$)  and Theorem B are false. However, we can prove the following result.

\begin{thmC}
  	Let $G$ be a finite group and let $P$ be a Sylow $2$-subgroup of $G$.
	Then the following conditions are equivalent.
	\begin{description}
		\item[(1)] Every $2$-rational character $\chi \in \Irr(G)$ with $2 \nmid [\chi_P,1_P]$ has odd degree.
		\item[(2)] $\chi(x)\neq 0$ for every $x\in P$ and every $2$-rational  $\chi \in \Irr(G)$ with $2 \nmid [\chi_P,1_P]$.
		\item[(3)] $P$ is normal in $G$.
	\end{description}
      \end{thmC}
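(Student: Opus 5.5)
Theorem C is the final statement shown, and the plan below is to prove it.

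The trivial implications come first. If $P \nor G$, then every $\chi\in\Irr(G)$ with $[\chi_P,1_P]\neq 0$ has $P$ in its kernel. So $\chi(1)$ divides $|G:P|$, which is odd, and $\chi(x)=\chi(1)\neq 0$ for $x\in P$. For characters with $[\chi_P,1_P]=0$ the multiplicity is $0$, which is even, so conditions (1) and (2) say nothing about them. Thus (3) implies both (1) and (2). Also (2) implies (1): if $\chi(1)$ is even and $\chi$ is $2$-rational, I expect to produce some $x\in P$ with $\chi(x)=0$. The standard route is to work with $\chi_P$. Since $[\chi_P,1_P]$ is odd while $\chi(1)$ is even, $\chi_P$ has a nontrivial linear constituent or a constituent of even degree. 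Rather than analyse this directly, I would run the same reduction for both (1) and (2) simultaneously, as in \cite{MN}, \cite{NT}. So the real content is to show that (1) or (2) implies (3).

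The reduction goes by minimal counterexample $G$. Let $N$ be a minimal normal subgroup. Inflation preserves $2$-rationality. It also preserves the multiplicity of $1_{PN/N}$: for $\chi\in\Irr(G/N)$ we have $[\chi_P,1_P]=[\chi_{PN},1_{PN}]$, because $N\le\ker\chi$. Hence $G/N$ satisfies the hypothesis and $PN\nor G$. If $N$ is a $2$-group we are done, and if $N$ is a $2'$-group, then a Frattini/Glauberman-type argument with $N_G(P)$ applies. Here the odd-multiplicity condition must be checked for characters induced from $N_G(P)N/N$-type subgroups. I would use that for $2'$-groups $N$ acted on by $P$, the $P$-invariant characters in $\Irr(N)$ correspond via Glauberman to $\Irr(C_N(P))$, and that $\theta^{G}$ for $P$-invariant $\theta$ has a $2$-rational constituent $\chi$ over $\theta$ of odd multiplicity $[\chi_P,1_P]$ and even degree unless $P$ centralises $N$. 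One counts via $[\theta^{PN},1_P]$ and the fact that $\theta$ extends canonically to $PN$ with a $2$-rational extension. This leaves the case where $N$ is a nonabelian minimal normal subgroup, a direct product of copies of a simple group $S$ of even order, with $G=NP$ essentially.

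The simple-group step uses tensor-induction. I would use a statement of the kind in Theorems \ref{simple1} and \ref{simple2}, adapted to odd multiplicity. For every nonabelian simple $S$ with $S\le A\le\Aut(S)$ and $2\mid|S|$, I want a $2$-rational $\theta\in\Irr(S)$ with the following properties: $\theta$ is $A$-invariant (or extends to its stabiliser in $A$), $\theta(1)$ is even, the multiplicity $[\theta_{Q},1_Q]$ is odd for $Q\in\Syl_2(S)$, and $\theta$ extends to a $2$-rational character $\hat\theta$ of $A$ with $[\hat\theta_{R},1_R]$ odd for $R\in\Syl_2(A)$. Then $\theta\otimes\cdots\otimes\theta$ extends to $G$ by tensor induction, yielding $\chi$ of even degree. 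Its multiplicity $[\chi_P,1_P]$ is computed as an orbit count, and it is odd because $P$ permutes the copies of $S$ and fixed points dominate modulo $2$. For (2) one also needs $\chi(x)=0$ for some $x\in P$. This would come from choosing $\theta$ of $2$-defect zero when possible, for example the Steinberg character in characteristic $2$, whose restriction to a Sylow $2$-subgroup is the regular character, so the multiplicity is $1$.

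The main obstacle is exhibiting $\theta$ for all simple groups in odd characteristic and for alternating and sporadic groups. For groups of Lie type in characteristic $2$ the Steinberg character works, since its multiplicity is $1$ and it extends rationally to $\Aut(S)$. For alternating groups I would take hook or $2$-core characters and compute the multiplicity through the Littlewood--Richardson rule on a Sylow $2$-subgroup. For odd $q$, I would take unipotent characters, or Deligne--Lusztig characters of $2$-defect zero relative to a torus, as in the almost simple verification of the paper. Sporadic groups would be checked with GAP. Controlling the parity of $[\theta_Q,1_Q]$ there is the delicate point.
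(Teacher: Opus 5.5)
Your plan has two genuine gaps, one in each branch of the reduction. The simple-group input, which is the real content, is left as ``the main obstacle''.

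\textbf{The abelian case.} Suppose $N$ is elementary abelian of odd order. Here $P$-invariant characters and the Glauberman correspondence are the wrong mechanism. Your claim that, for $P$-invariant $\theta\in\Irr(N)$, some $2$-rational $\chi$ over $\theta$ has odd $[\chi_P,1_P]$ and even degree unless $P$ centralises $N$ is false. For $\theta=1_N$, every such $\chi$ has $PN/N\nor G/N$ in its kernel, so it has odd degree. More generally, a $P$-invariant linear $\theta$ extends to $\hat\theta$ on $NP$ with $\hat\theta_P=1_P$, so $\hat\theta$ is the only character of $NP$ over $\theta$ and $1_P$, and it is linear. Already for $G=\SSS_3$ the only $P$-invariant character of $N=\AAA_3$ is $1_N$. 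The witness (degree $2$, $[\chi_P,1_P]=1$, vanishing on involutions) lies over a $\lambda$ that $P$ moves.

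What is needed is the opposite. Take a chief factor $M/N\le NP/N$; it acts faithfully on $\Irr(N)$ since $\oh 2G=1$. Brodkey's theorem then gives $\lambda\in\Irr(N)$ with $I_M(\lambda)=N$, so $\lambda^M$ is irreducible and vanishes on $M\smallsetminus N$. You must still build a $2$-rational $\chi\in\Irr(G|\lambda)$ with odd multiplicity. The paper extends $\lambda$ to $T=I_G(\lambda)\cap NP=NP_0$ with kernel $LP_0$, where $L$ is the kernel of $\lambda$; this kernel is normal in $I_G(\lambda)$. Hence any $\xi$ over this extension is a character of an odd-order quotient, with $[\xi_{P_0},1_{P_0}]=\xi(1)$ odd. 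Mackey's formula and Lemma~\ref{lem5}, applied to the odd-index normal subgroup $NP$, then carry this parity to $\chi=\xi^G$.

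\textbf{The nonabelian case.} Tensor induction does not control parity; your ``fixed points dominate modulo $2$'' confuses fixed points with orbits. Suppose $t\in P$ swaps $S_1,S_2$ and $P=(Q\times Q)\langle t\rangle$. Let $U$ be the $Q$-fixed space of $\theta$, with $m=\dim U=[\theta_Q,1_Q]$ odd. For the natural extension of $\theta\otimes\theta$, the $P$-fixed space is $\mathrm{Sym}^2U$. Its dimension $m(m+1)/2$ is the number of $t$-orbits on $\{u_i\otimes u_j\}$, not the number $m$ of fixed basis vectors, and it equals $6$ when $m=3$. Tensor induction also needs $\theta^{\otimes t}$ to be $G$-invariant, i.e.\ $\theta$ to be $D/C$-invariant, and a $2$-defect-zero character need not be.

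The paper avoids both problems by using $\theta=\alpha_1\times 1_{S_2}\times\cdots\times 1_{S_t}$. Its inertia group $I$ lies in $D=\norm G{S_1}$ and equals $I_D(\alpha)$. So it suffices to have a $2$-rational $\beta\in\Irr(I/C)$ over $\alpha$ with $[\beta_Q,1_Q]$ odd, and a $2$-element $x$ with $\alpha^\varsigma(x)=0$ for all $\varsigma\in\Aut(S)$. Then $\chi=\beta^G$, and Mackey gives $[\beta^{IP},(1_P)^{IP}]=[\beta_Q,1_Q]$ exactly. Lemma~\ref{lem5} handles $NP\nor G$, so your unjustified reduction to $G=NP$ is not needed.

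The existence of such $\alpha,\beta$ is Theorem~\ref{simple2a}. It comes from a $2$-defect-zero character of $S$ (the Steinberg character, or a suitable semisimple $\chi_s$) via Lemma~\ref{Lgab}(iii), plus direct checks for the small, sporadic and alternating groups.

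Finally, the easy implication is (1)$\Rightarrow$(2), not (2)$\Rightarrow$(1): $\chi(x)\equiv\chi(1)$ modulo $2R$ for $x\in P$, where $R$ is the ring of algebraic integers. So only (2)$\Rightarrow$(3) needs the reduction, and it requires vanishing, not just even degree.
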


\section{Preliminaries}

By Frobenius reciprocity, a character $\chi$ is an irreducible constituent  of the induced character
$(1_P)^G$, where $P$ is a subgroup of the group $G$, if and only if $1_P$ is an irreducible constituent of
the restriction $\chi_P$; in this case, we will say that $\chi$ \emph{lies over} $1_P$. 

\begin{lem}\label{Ldef0}
	Let $p$ be a prime number and let $P$ be a Sylow $p$-subgroup of a finite group $G$.
	If $\chi \in \irr G$ has $p$-defect zero, then $\chi$ is a $p$-rational character that contains
        $1_P$ with multiplicity coprime to $p$  and such that $\chi(x) = 0$ for every non-trivial element $x \in P$.
      \end{lem}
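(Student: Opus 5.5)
The proof rests on the standard vanishing property of defect zero characters, from which the other two claims follow. Let $|G|_p = p^a$ and suppose $\chi(1)_p = p^a$.

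\textbf{Step 1: vanishing on $P$.} By the classical theorem of Brauer and Nesbitt, a character of $p$-defect zero vanishes on every $p$-singular element of $G$. One way to see this is that $\chi$ is a projective character: it is the unique irreducible character in its block and equals its own projective indecomposable character. Alternatively, one can use the central character argument: for a nontrivial $p$-element $x$, the quantity $|x^G|\chi(x)/\chi(1)$ is an algebraic integer, and $p \nmid |x^G|$ fails only mildly. I will simply cite the standard fact, e.g.\ Isaacs, Theorem 8.17, or Navarro's book. Hence $\chi(x)=0$ for every nontrivial $x\in P$.

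\textbf{Step 2: multiplicity of $1_P$.} By Step 1,
\[ [\chi_P,1_P] = \frac{1}{|P|}\sum_{x\in P}\chi(x) = \frac{\chi(1)}{|P|} = \frac{\chi(1)}{p^a}. \]
Since $\chi(1)_p = p^a$, this is a positive integer coprime to $p$. In particular $\chi$ lies over $1_P$.

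\textbf{Step 3: $p$-rationality.} We must show that $\chi$ has values in a cyclotomic field $\QQ_m$ with $p\nmid m$. By Step 1, $\chi(g)=0$ unless $g$ is $p$-regular. For a $p$-regular element $g$ of order $m$, the value $\chi(g)$ is a sum of $m$-th roots of unity with $p\nmid m$. So every value of $\chi$ lies in $\QQ_{|G|_{p'}}$, and $\chi$ is $p$-rational.

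The only substantive input is Step 1, and it is a textbook result. If a self-contained argument is wanted, I would derive it via projectivity: $\chi$ extends to the character of a projective $\mathcal{O}G$-lattice, and the character of a projective module vanishes off $p$-regular elements.
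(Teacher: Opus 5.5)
Your proof is correct and follows the paper's argument: cite Isaacs, Theorem 8.17, for vanishing on $p$-singular elements, then read off $p$-rationality and $[\chi_P,1_P]=\chi(1)/|P|$; your Step 3 simply spells out the $p$-rationality that the paper states in one line. The aside about a central character argument (``$p \nmid |x^G|$ fails only mildly'') is not a valid argument as written, but it does no harm because you rely on the citation instead.
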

      
\begin{proof}
  By \cite[Theorem 8.17]{Is}, $\chi(x)=0$ for every element $x$ in $G$ of order divisible by $p$.
  Hence, $\chi$ is $p$-rational and $[\chi_P,1_P]= \chi(1)/|P|$, a positive integer coprime to $p$.
\end{proof}

\begin{lem}\label{Lprat}
  Let $G$ be a finite group, $N$ a normal subgroup of $G$ and $p$ a prime number.
  If $\theta \in \irr N$ has $p$-defect zero and $|G/N|$ is coprime to $p$, then
  there exists a $p$-rational character $\chi \in \irr G$ such that $\chi$ lies over $\theta$. 
\end{lem}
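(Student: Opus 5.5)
The plan is to show that \emph{every} irreducible character of $G$ lying over $\theta$ already has $p$-defect zero as a character of $G$. Once that is established, Lemma~\ref{Ldef0} applied to $G$ immediately gives that such a character is $p$-rational, and we are done.

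First, I would pick any $\chi \in \irr G$ lying over $\theta$. Such a $\chi$ exists: take any irreducible constituent of $\theta^G$, which lies over $\theta$ by Frobenius reciprocity. By Clifford's theorem,
\[
\chi_N = e\sum_{i=1}^{t}\theta_i,
\]
where the $\theta_i$ are the $G$-conjugates of $\theta$. Hence $\chi(1)=et\,\theta(1)$, and in particular $\theta(1)$ divides $\chi(1)$. Therefore $\chi(1)_p \ge \theta(1)_p$.

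Next, I would compare the $p$-parts of the degrees with $|G|_p$.
\begin{itemize}
\item Since $\theta$ has $p$-defect zero in $N$, we have $\theta(1)_p=|N|_p$.
\item Since $|G/N|$ is coprime to $p$, we have $|N|_p=|G|_p$.
\item Since $\chi(1)$ divides $|G|$, we have $\chi(1)_p\le |G|_p$.
\end{itemize}
Combining these with the previous step gives
\[
|G|_p=|N|_p=\theta(1)_p\le \chi(1)_p\le |G|_p,
\]
so $\chi(1)_p=|G|_p$. Thus $\chi$ has $p$-defect zero in $G$.

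Finally, Lemma~\ref{Ldef0}, applied to $G$ and a Sylow $p$-subgroup of $G$, shows that $\chi$ is $p$-rational. This completes the argument.

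There is no real obstacle here. The only point requiring care is the bookkeeping of $p$-parts, where we use that the index $|G:N|$ contributes nothing at $p$. No Clifford correspondence or extendibility argument is needed, since the conclusion holds for every character over $\theta$.
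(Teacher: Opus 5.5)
Your proof is correct, but it follows a different route from the paper. The paper gives no argument of its own. It obtains the lemma as a special case of \cite[Corollary 2.4]{NT}, a more general result from Navarro--Tiep about finding $p$-rational characters above a character of a normal subgroup of $p'$-index.

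You instead notice that in the defect-zero case nothing has to be constructed. By Clifford's theorem $\theta(1)\mid\chi(1)$, so $\chi(1)_p\ge\theta(1)_p=|N|_p=|G|_p\ge\chi(1)_p$. Hence every $\chi\in\Irr(G\mid\theta)$ has $p$-defect zero, and Lemma~\ref{Ldef0} (ultimately the vanishing of defect-zero characters on $p$-singular elements) gives $p$-rationality.

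Your route is self-contained and proves more. Every constituent of $\theta^G$ is $p$-rational and of $p$-defect zero. It also vanishes on the nontrivial elements of a Sylow $p$-subgroup and contains $1_P$ with $p'$-multiplicity. That already gives directly what the paper later extracts from $\tau=\theta^W$ via Clifford's theorem in the proof of Theorem~A.

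The citation is shorter on the page, but it imports a result whose extra generality plays no role here. That generality presumably covers characters $\theta$ that are only $p$-rational, where a genuine Galois-action argument is needed.
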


\begin{proof}
  This is a particular case of~\cite[Corollary 2.4]{NT}.
\end{proof}

\begin{lem}\label{Lgab}
 Suppose that $G$ is a finite group with a normal subgroup $N$ such that $G/N$ is a $p$-group, $P \in \syl pG$, $Q=P\cap N$.
 Let $\theta \in \irr N$ be $G$-invariant such that $m:=[\theta_Q, 1_Q]$ is not divisible by $p$.
 \begin{enumerate}[\rm(i)]
 \item Then $\theta$ has a unique extension $\hat\theta \in \irr G$ such that if $\hat\theta_P=\Psi + \Delta$,
 where $\Psi$ is a character of $P/Q$, $\Delta$ is a character of $P$ or zero with $[\Delta_Q, 1_Q]=0$, then ${\rm det}(\Psi)=1_P$. 
 In particular, $\QQ(\hat\theta)=\QQ(\theta)$.
 \item Suppose that $m=1$.
 Then $[\hat\theta_P, 1_P]$ is not divisible by $p$. 
 \item Suppose that $p=2$, and either $\theta$ is $p$-rational or real-valued.
 Then $\theta$ has an extension $\tilde\theta$ to $G$ such that $p \nmid [\tilde\theta_P, 1_P]$ and $\QQ(\tilde\theta)=\QQ(\theta)$. 
 Moreover, if $\theta$ is real-valued, then $\tilde{\theta}$ is strongly real.
 \end{enumerate}
 \end{lem}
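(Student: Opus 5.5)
The plan is to build the extension by hand from the $Q$-fixed vectors, obtain uniqueness by a determinant normalization, and then deduce (ii) and (iii) from the structure of $\Psi$.

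\emph{Part (i).} Since $\theta$ is $G$-invariant and $G/N$ is a $p$-group, I would use projective representations. Let $\mathcal X$ be a representation of $N$ affording $\theta$ on a space $V$. Extend it to a projective representation $\mathcal Y$ of $G$ with cocycle $\alpha$ on $G/N$; we may choose $\alpha$ with values of $p$-power order, so $\alpha$ has $p$-power order in $H^2(G/N,\CC^\times)$. Since $Q\nor P$ and $\mathcal Y(x)\mathcal X(n)\mathcal Y(x)^{-1}=\mathcal X(n^x)$, the space $W=V^Q$ of $Q$-fixed vectors has dimension $m$ and is $\mathcal Y(P)$-stable. Restricting $\mathcal Y$ to $P$ and to $W$ gives a projective representation of $P/Q\cong G/N$ of degree $m$ with cocycle $\alpha$. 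Taking determinants shows that $\alpha^m$ is a coboundary, and since $p\nmid m$ this forces $\alpha$ to be trivial. Hence $\theta$ extends to $G$.

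By Gallagher, the extensions of $\theta$ are exactly the characters $\hat\theta\lambda$ with $\lambda\in\irr{G/N}$ linear. Write $\hat\theta_P=\Psi+\Delta$, where $\Psi$ is the character of $P$ on $W$. Replacing $\hat\theta$ by $\hat\theta\lambda$ replaces $\Psi$ by $\Psi\lambda_P$, hence $\det(\Psi)$ by $\det(\Psi)\lambda_P^m$. The group of linear characters of $G/N\cong P/Q$ is a $p$-group and $p\nmid m$, so $\lambda\mapsto\lambda^m$ is a bijection on it. Therefore exactly one extension satisfies $\det(\Psi)=1_P$. For the field: any $\sigma\in{\rm Gal}$ fixing $\QQ(\theta)$ sends $\hat\theta$ to an extension of $\theta$, and its $Q$-fixed part is $\Psi^\sigma$, which still has determinant $1_P$. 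By uniqueness $\hat\theta^\sigma=\hat\theta$, so $\QQ(\hat\theta)=\QQ(\theta)$.

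\emph{Part (ii).} If $m=1$, then $\Psi$ is linear with determinant $1_P$, so $\Psi=1_P$. Since $[\Delta_Q,1_Q]=0$, $\Delta$ contains no copy of $1_P$. Hence $[\hat\theta_P,1_P]=1$.

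\emph{Part (iii).} Take $p=2$, so $m$ is odd. Apply $\sigma\in{\rm Gal}$ fixing $\QQ(\theta)$; it preserves the decomposition $\Psi+\Delta$, because that decomposition is determined by $Q$-fixed points. So $\Psi$ has values in $\QQ(\theta)$, and also in a $2$-power cyclotomic field since $P/Q$ is a $2$-group.
\begin{itemize}
\item If $\theta$ is $2$-rational, $\Psi$ is rational-valued.
\item If $\theta$ is real, then by uniqueness $\overline{\hat\theta}=\hat\theta$, so $\Psi$ is real-valued.
\end{itemize}
In a $2$-group, nonlinear irreducible characters have even degree. Non-real linear characters come in complex-conjugate pairs, and non-rational Galois orbits have even size. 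Since $\Psi(1)=m$ is odd, the total multiplicity in $\Psi$ of linear characters $\mu$ of $P/Q$ with $\mu^2=1$ is odd, so some such $\mu$ occurs with odd multiplicity. Let $\lambda$ be the corresponding character of $G/N$; it is rational and satisfies $\lambda^2=1$. Put $\tilde\theta=\hat\theta\lambda$. Then $\QQ(\tilde\theta)=\QQ(\theta)$, and $[\tilde\theta_P,1_P]=[\Psi\mu,1_P]$ is the multiplicity of $\mu$ in $\Psi$, which is odd.

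\emph{Main obstacle.} The hardest step is the final claim that $\tilde\theta$ is strongly real when $\theta$ is real. My first attempt would be a Frobenius--Schur-type argument. For $\theta$ real, $\mathcal X$ preserves a nondegenerate $N$-invariant bilinear form; its restriction to the odd-dimensional $Q$-fixed space $W$ is nondegenerate. Extending the form to $\tilde\theta$, with the scalar twists normalized so that $P$ acts on $W$ with $1_P$ occurring an odd number of times, should force the form to be symmetric, i.e.\ an orthogonal rather than symplectic structure. If strongly real is meant in the sense of a real element argument, I would instead try to show that $\tilde\theta$ takes real values on the relevant cosets of $N$, using $\lambda^2=1$.
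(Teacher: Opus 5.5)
Everything up to the last sentence of (iii) is sound. In (i) you prove directly what the paper only quotes from \cite[Corollary 6.10]{N}. Restricting the projective extension to $V^Q$ gives $[\alpha]^m=1$, so $[\alpha]=1$ since $p\nmid m$. Uniqueness then follows because $\lambda\mapsto\lambda^m$ is bijective on the $p$-group of linear characters of $G/N$. Your (ii) and the construction $\tilde\theta=\hat\theta\lambda$ in (iii) are the paper's argument; your pairing of non-real linear characters is its count of $\Gamma$-orbits.

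The gap is the final claim that $\tilde\theta$ is strongly real when $\theta$ is real. You only say the form ``should'' be symmetric, and you are unsure what the term means. In this paper strongly real means afforded by a real representation, i.e.\ Frobenius--Schur indicator $+1$ (see the proofs of Lemma~\ref{ext}(ii) and Theorem~\ref{simple2b}). So your fallback of showing that $\tilde\theta$ takes real values adds nothing. You already have $\QQ(\tilde\theta)=\QQ(\theta)\subseteq\RR$, and real values do not exclude indicator $-1$.

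The missing idea is to use the odd multiplicity you have just produced. The paper notes that $[\tilde\theta,(1_P)^G]=[\tilde\theta_P,1_P]$ is odd. Since $(1_P)^G$ is afforded by a permutation module carrying a nondegenerate $G$-invariant symmetric form, \cite[Lemma 2.4(iii)]{T} applies. Equivalently, the real Schur index $m_\RR(\tilde\theta)\le 2$ divides this odd multiplicity by \cite[Corollary (10.2)(c)]{Is}, so $m_\RR(\tilde\theta)=1$.

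Your bilinear-form idea also works once it is carried out:
\begin{itemize}
\item Since $\tilde\theta$ is real-valued and irreducible, it preserves a nondegenerate $G$-invariant form $B$ on $V$, which is either symmetric or alternating.
\item Take $v\in V^P$ and $w$ in a nontrivial $P$-isotypic component. Then $B(v,w)=B(v,e_Pw)=0$, where $e_P=|P|^{-1}\sum_{x\in P}x$.
\item Hence $B$ is nondegenerate on $V^P$, whose dimension $[\tilde\theta_P,1_P]$ is odd, so $B$ cannot be alternating.
\end{itemize}
No normalization of twists is needed if you run the same argument on $V^Q$, which has odd dimension $m$. It shows that $\theta$ itself has indicator $+1$. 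Then any real-valued extension also has indicator $+1$: its invariant form is $N$-invariant, so by Schur's lemma it is a scalar multiple of the symmetric $N$-invariant form. As written, however, this step is asserted rather than proved.
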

 
\begin{proof}
(i) is \cite[Corollary 6.10]{N}.

\smallskip
(ii) Here we have $\Psi_Q= 1_Q$, which implies $\Psi=\det\Psi=1_P$. As $[\Delta_Q,1_Q]=0$, 
it follows that $[\hat\theta_P,1_P]=1$. 

\smallskip
(iii) If $\theta$ is $2$-rational, let $\Gamma:=\mathrm{Gal}\bigl(\QQ(\exp\frac{2\pi i}{|G|})/\QQ(\exp\frac{2\pi i}{|G|_{2'}})\bigr)$. If $\theta$ is real-valued,
let $\Gamma:=\langle \tau \rangle$, where $\tau$ is the complex conjugation automorphism of $\QQ(\exp\frac{2\pi i}{|G|})$. By assumption,  
$\theta$ is $\Gamma$-fixed, and so is $\hat\theta$. It follows that the $P/Q$-character $\Psi$, which has odd degree $m$, is also $\Gamma$-fixed.
Now write 
$$\Psi = \sum_\OC a_\OC \bigl(\sum_{\lambda \in \OC}\lambda\bigr),$$
where $\OC$ runs over the set of $\Gamma$-orbits on the set of irreducible constituents of $\Psi$ and $a_\OC \in \ZZ_{\geq 1}$. 
As $\Gamma$ is a finite $2$-group, the length of any $\Gamma$-orbit $\OC$ is a $2$-power.
Since $m=\Psi(1)$ is odd, there must exist a $\Gamma$-orbit $\OC_1 = \langle \gamma \rangle$ of length $1$, such that 
$\gamma(1)$ and $a_{\OC_1}$ are odd. But $P/Q$ is a $2$-group, so $\gamma(1)=1$. As $\gamma$ is $\Gamma$-fixed, it follows that 
$\gamma$ has order $1$ or $2$; in particular, $\gamma$ is rational. Also note that $[\hat\theta_P,\gamma]=a_{\OC_1}$ is odd.
Now we can view $\gamma$ as a linear character of $G/N$ by identifying $G/N$ with $P/Q$, and take 
$\tilde\theta:=\gamma\hat\theta$. 
Finally, if $\theta$ is real-valued, then $\tilde{\theta}$ is real-valued and $[\tilde{\theta},(1_P)^G]$ is odd.
Hence, as the permutation module affording the character $(1_P)^G$ admits a nondegenerate $G$-invariant symmetric bilinear form,  
by \cite[Lemma 2.4(iii)]{T} the character $\tilde{\theta}$ is strongly real.
\end{proof}

Note that Lemma \ref{Lgab}(iii) may fail when $p>2$. In particular, it fails when $p=2^n+1 \geq 3$ is a Fermat prime. For such a prime $p$, we can take 
$G = 2^{1+2n}_- \rtimes \mathsf{C}_p$ and note that the unique faithful irreducible character $\theta$ of $N$ (of degree $2^n$) 
has a unique $p$-rational extension to $G$ which however does not lie over $1_P$.

\begin{lem}\label{triv}
Let $G$ be a finite group, $\chi \in \Irr(G)$, and let $P$ be a $p$-subgroup of $G$.
\begin{enumerate}[\rm(i)]
\item If $\chi$ is real-valued and of $2$-defect zero, then $\chi$ is strongly real.
\item Suppose there is a prime $\ell \neq p$ such that the restriction $\chi^\circ$ of $\chi$ to
$\ell'$-elements of $G$, considered as an $\ell$-Brauer character, contains $1_G$ as an irreducible constituent.
Then $\chi_P$ contains $1_P$. 
\end{enumerate}
\end{lem}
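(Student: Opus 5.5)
For (i), I would use the Frobenius--Schur indicator together with the structure of defect-zero modules. Since $\chi$ has $2$-defect zero, it is the unique character in its $2$-block. It reduces modulo $2$ to an irreducible projective Brauer character $\chi^\circ$, and that Brauer character lifts uniquely to $\chi$. Because $\chi$ is real-valued, I would first show that $\chi$ is afforded by a lattice carrying a $G$-invariant symmetric form, which means the Frobenius--Schur indicator is $+1$.

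The plan for that step is to use the fact that the projective indecomposable module $P(\varphi)$ with $\varphi=\chi^\circ$ is self-dual. For $\varphi$ not the trivial module in characteristic $2$, a result of Fong (Thompson's version) says that a real projective indecomposable module carries a nondegenerate symmetric invariant form. More concretely, I would invoke \cite[Lemma 2.4]{T} as in the proof of Lemma \ref{Lgab}(iii): it suffices to find a module with an invariant nondegenerate symmetric form in which $\chi$ occurs with odd multiplicity. Since $\chi$ is of defect zero and real, it occurs in the regular character $(1_1)^G$ with multiplicity $\chi(1)$. The regular permutation module has an invariant symmetric form, but $\chi(1)$ may be even, so this module does not work directly.

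Instead I would take a Sylow $2$-subgroup $P$. By Lemma \ref{Ldef0}, $[\chi_P,1_P]=\chi(1)/|P|$, which is odd. So $\chi$ occurs in the permutation character $(1_P)^G$ with odd multiplicity. The permutation module $\CC[G/P]$ has the standard invariant symmetric bilinear form, so \cite[Lemma 2.4(iii)]{T} yields that $\chi$ is strongly real. This is the key step: Lemma \ref{Ldef0} supplies the odd multiplicity, and the permutation form supplies the symmetric structure.

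For (ii), I would use the decomposition of $\chi^\circ$ restricted to $P$. Since $P$ is a $p$-group with $p\neq \ell$, every element of $P$ is $\ell'$. Hence $\chi_P=(\chi^\circ)_P$, viewed as an ordinary character of the $\ell'$-group $P$. Write $\chi^\circ=\sum_\varphi d_\varphi \varphi$ with $d_{1_G}\ge 1$, where the $\varphi$ are irreducible Brauer characters of $G$. Restricting to $P$, each $\varphi_P$ is a Brauer character of $P$. Because $\ell\nmid|P|$, Brauer characters of $P$ coincide with ordinary characters of $P$, so each $\varphi_P$ is a genuine character of $P$.

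Therefore $\chi_P = d_{1_G}1_P + \sum_{\varphi\neq 1_G} d_\varphi\varphi_P$ is a sum of characters of $P$ with nonnegative coefficients. This gives $[\chi_P,1_P]\ge d_{1_G}\ge1$. The only point to check is that the restriction of a Brauer character to an $\ell'$-subgroup is an ordinary character. This holds because restricting the $\ell$-modular representation to $P$ gives an $\FF$-representation of the $\ell'$-group $P$, whose Brauer character lifts to an ordinary character.
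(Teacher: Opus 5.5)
Your proposal is correct. For (ii) you follow the paper's argument: the paper just writes $\chi_P=(\chi^\circ)_P$ contains $1_P$. You spell out the step it leaves implicit, namely that each irreducible Brauer constituent of $\chi^\circ$ restricts to a genuine character of the $\ell'$-group $P$, so $[\chi_P,1_P]\ge d_{1_G}\ge 1$.

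For (i) your route is genuinely different. The paper gives no argument and simply cites Remark~2 on \cite[p.~254]{R}. You instead derive the statement from tools already in the paper:
\begin{itemize}
\item for $P\in\Syl_2(G)$, Lemma~\ref{Ldef0} gives $[\chi,(1_P)^G]=\chi(1)/|P|$, which is odd;
\item the permutation character $(1_P)^G$ is afforded by a real (indeed rational) representation;
\item hence the real Schur index of the real-valued $\chi$ divides an odd number, so it is $1$, and the Frobenius--Schur indicator is $+1$.
\end{itemize}
This is the same mechanism the paper uses via \cite[Lemma 2.4(iii)]{T} in Lemma~\ref{Lgab}(iii) and via \cite[Corollary (10.2)(c)]{Is} in Lemma~\ref{ext}(ii).

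Your version is self-contained and slightly more general. It shows that any real-valued $\chi\in\Irr(G)$ with $[\chi_H,1_H]$ odd for some subgroup $H$ is strongly real; defect zero serves only to produce the odd multiplicity. The paper's citation is shorter but outsources the argument.

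Drop your opening detour through Fong's lemma and projective indecomposable modules. It plays no role in the final proof, and the statement you attribute there would need a precise reference.
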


\begin{proof}
(i) This is Remark 2 on \cite[p. 254]{R}. 

\smallskip
(ii) By assumption, $\chi_P = (\chi^\circ)_P$ contains $1_P$. In fact the statement is true for $P$ any $\ell'$-subgroup.
\end{proof}

\begin{lem}\label{ext}
Let $G$ be a finite group with a normal subgroup $N$, $\alpha \in \Irr(N)$ be $G$-invariant, and let $\beta$ be a real-valued 
$G$-invariant irreducible $\ell$-Brauer character of $N$ for some prime $\ell$. Also assume $P \in \Syl_p(G)$ for some prime 
$p \neq \ell$.
\begin{enumerate}[\rm(i)]
\item Let $\rho$ be a character of $G$
not necessarily irreducible with $[\al,\rho_{N}] = 1$ and  $\QQ(\rho) \subseteq \QQ(\alpha)$.
Then $\rho$ contains an irreducible constituent $\chi \in \Irr(G)$ such that $\QQ(\chi) = \QQ(\alpha)$ and $\chi_{N} = \al$.
\item Suppose there is a subgroup $H$ such that $[\al,\rho_N]=1$ for $\rho:=(1_H)^G$ and moreover $\al$ is real-valued.
Then $\rho$ contains an irreducible constituent $\chi \in \Irr(G)$ such that $\chi$ is strongly real and $\chi_{N} = \al$.
\item  Suppose $\chi$ is a real-valued character of $G$ not necessarily irreducible  such that
$(\chi^\circ)_N$ contains $\beta$ with multiplicity one, for the restriction $\chi^\circ$ of $\chi$ to 
$\ell'$-elements of $G$. Then $\chi^\circ$ contains a real-valued irreducible constituent $\gamma$ with $\gamma_N=\beta$.
Furthermore, if $\beta=1_N$, or $p=2=|G/N|$ and $\chi_Q$ contains $1_Q$ for $Q = P \cap N \in \Syl_2(N)$, 
then there exists a linear character $\lambda$ of $G/N$ of order $\leq \gcd(2,p)$ such that $(\chi\lambda)_P$ contains $1_P$.
\end{enumerate}
\end{lem}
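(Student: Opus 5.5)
For (i), the idea is that multiplicity one forces a unique constituent over $\alpha$, and then Galois conjugation cannot move it. Since $\alpha$ is $G$-invariant, Clifford's theorem gives $\chi_N = e_\chi\alpha$ for every irreducible constituent $\chi$ of $\rho$ lying over $\alpha$. Writing $\rho=\sum_\chi a_\chi \chi$, we get
$$1=[\rho_N,\alpha]=\sum_{\chi \text{ over } \alpha} a_\chi e_\chi .$$
So there is exactly one such $\chi$, and it has $a_\chi=e_\chi=1$; that is, $\chi_N=\alpha$ and $[\chi,\rho]=1$. Now let $\sigma$ be a Galois automorphism fixing $\QQ(\alpha)$. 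Then $\sigma$ fixes $\rho$, because $\QQ(\rho)\subseteq\QQ(\alpha)$, and it fixes $\alpha$. Hence $\chi^\sigma$ is again a constituent of $\rho$ lying over $\alpha$, and uniqueness gives $\chi^\sigma=\chi$. Therefore $\QQ(\chi)\subseteq\QQ(\alpha)$. The reverse inclusion is automatic from $\chi_N=\alpha$.

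For (ii), apply (i) to $\rho=(1_H)^G$, which is rational. This yields a unique $\chi$ with $\chi_N=\alpha$, and $\chi$ is real-valued since $\QQ(\chi)=\QQ(\alpha)\subseteq\RR$; the uniqueness argument with complex conjugation gives the same conclusion directly. The proof of (i) also shows $[\chi,(1_H)^G]=1$, which is odd. The permutation module affording $(1_H)^G$ carries a nondegenerate $G$-invariant symmetric bilinear form. So \cite[Lemma 2.4(iii)]{T} applies exactly as in the proof of Lemma \ref{Lgab}(iii), and $\chi$ is strongly real.

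For (iii), I would repeat the argument of (i) for $\ell$-Brauer characters. Decompose $\chi^\circ$ into irreducible Brauer characters. Clifford theory for Brauer characters, with $\beta$ being $G$-invariant, gives $\gamma_N=e\beta$ for each constituent $\gamma$ over $\beta$. Multiplicity one then forces a unique such $\gamma$, with $e=1$. Since $\chi^\circ$ and $\beta$ are real, $\bar\gamma$ has the same property, so $\bar\gamma=\gamma$.

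For the additional statement in (iii), consider first the case $p=2=|G/N|$. Here $Q=P\cap N$ has index $2$ in $P$, and $(1_Q)^P=1_P+\mu$, where $\mu$ is the sign character of $P/Q$. If $\chi_Q\ni 1_Q$, then by Frobenius reciprocity $\chi_P$ contains $1_P$ or $\mu$. Identifying $P/Q$ with $G/N$, extend $\mu$ to the linear character $\lambda$ of order $2$ of $G/N$. Then either $\lambda=1$ or $\chi\lambda$ works, since $\lambda^2=1$.

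The case $\beta=1_N$ is where I expect the main difficulty. Here $\gamma$ is a real irreducible Brauer character of $G/N$, and $P$ is an $\ell'$-group, so $\gamma_P$ is an ordinary character of $PN/N$ contained in $\chi_P$. I would try to show that $\gamma_P$ contains a rational linear character of $P$ inflated from $G/N$. My first attempt is to pass to $PN/N$, use that $\gamma$ is real, and pair off non-real constituents under complex conjugation so that a constituent of order $\le 2$ is forced; this is the orbit-counting trick of Lemma \ref{Lgab}(iii). For $p$ odd only the trivial character survives this pairing, so $\lambda=1$. For $p=2$ one gets a character of order $\le 2$, and it must then be extended to $G/N$. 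I am not sure the pairing argument gives a self-conjugate constituent in general, since it needs $\gamma(1)$ odd, and I would check this step first. If it fails, I would fall back on showing $\gamma$ is linear (for instance when $G/N$ is abelian or cyclic, as in the intended applications).
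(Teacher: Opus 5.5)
The gap is the case $\beta=1_N$ in (iii), which you leave open. Your other parts are fine: (i), (ii), the first assertion of (iii), and the case $p=2=|G/N|$. In (i) you reprove what the paper cites as \cite[Lemma 5.1]{NT1}. In (ii) you apply \cite[Lemma 2.4(iii)]{T} directly to $\chi$, using $[\chi,(1_H)^G]=1$. The paper instead uses the real Schur index to show that $\alpha$ is strongly real, and then passes this to the real-valued extension $\chi$; both routes work. Your argument via $(1_Q)^P=1_P+\mu$ is the Frobenius-reciprocity version of the paper's fixed-point argument.

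The routes you sketch for $\beta=1_N$ would not work in general. Pairing non-real constituents of $\gamma_P$ needs $\gamma(1)$ odd. Even a constituent of order $\le 2$ of $\gamma_P$ is only a character of $PN/N$, and it need not extend to $G/N$. Falling back to abelian $G/N$ does not prove the lemma as stated.

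The missing observation is already in your hands. You showed $\gamma_N=\beta$, so $\gamma(1)=\beta(1)=1$. Thus $\gamma$ is a real-valued linear $\ell$-Brauer character of $G/N$, hence of order $\le 2$. It is therefore the restriction to $\ell'$-elements of an ordinary linear character $\lambda$ of $G/N$ of the same order.
\begin{itemize}
\item If $p$ is odd, then $\gamma_{PN}=1_{PN}$, since $PN/N$ is a $p$-group. So $\chi_P=(\chi^\circ)_P$ contains $\gamma_P=1_P$, because $P$ is an $\ell'$-group, and you can take $\lambda=1$.
\item If $p=2$, then $(\chi\lambda)^\circ=\chi^\circ\gamma$ contains $\gamma^2=1_G$. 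Hence $(\chi\lambda)_P$ contains $1_P$ by Lemma \ref{triv}(ii).
\end{itemize}
No extension problem arises, because $\lambda$ comes from $\gamma$, which is already defined on all of $G/N$. This is exactly the paper's argument.
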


\begin{proof}
(i) This is \cite[Lemma 5.1]{NT1}.

\smallskip
(ii) Clearly, $\rho_N$ is afforded by a $\RR$-representation, hence the Schur index of $\al$ over $\RR$ divides $[\al,\rho_N]=1$ by
\cite[Corollary (10.2)(c)]{Is}. Since $\al$ is real-valued, it follows from \cite[Corollary (10.2)(e)]{Is} that $\al$ is strongly real. By 
(i), $\rho$ contains a real-valued constituent $\chi \in \Irr(G)$ such that $\chi_{N} = \al$. As $\al$ is strongly real, so is $\chi$.

\smallskip
(iii) Let $\gamma$ denote an irreducible constituent of $\chi^\circ$ lying above $\beta$. Since $(\chi^\circ)_N$ contains $\beta$ with multiplicity one,
$\gamma_N=\beta$. Since $\chi$ is real-valued, $\bar\gamma$ is an irreducible constituent of $\chi^\circ$ lying above $\bar\beta=\beta$, and so we 
deduce that $\gamma=\bar\gamma$. 

Now assume that $\beta=1_N$. Then $\gamma$ is a real-valued $\ell$-Brauer character of degree $1$ of $G/N$,
whence either $\gamma_{PN}=1_{PN}$, or $p = 2$ and $\gamma_{PN}$ is a linear character of order $2$ of the $p$-group $PN/N \cong P/Q$. 
In the former case $\chi_P$ contains $1_P$ by Lemma \ref{triv}(ii). In the latter case  $(\chi\gamma)_P$ contains
$1_P$ again by Lemma \ref{triv}(ii).

Next assume that $p=2=|G/N|$ and $\chi_Q$ contains $1_Q$ for $Q = P \cap N \in \Syl_2(N)$. Consider a $\CC G$-module $V$
affording $\chi$. Then $U \neq 0$ for the subspace $U$ of $Q$-fixed points on $V$. Now the group $P/Q \cong G/N$ of order $2$ acts on $U$, so $U$ admits a $P$-subspace that affords a linear character $\lambda$ of $P/Q$ of order $\leq 2$. Viewing $\lambda$ as a character of 
$G/N$, we conclude that $(\chi\lambda)_P$ contains $1_P$.
\end{proof}

\begin{lem}\label{index2}
Let $N$ be a normal subgroup of index $2$ of a finite group $G$, and let $\theta \in \Irr(N)$ be $G$-invariant.
\begin{enumerate}[\rm(i)]
\item Suppose there is a complex character $\chi$ of $G$ not necessarily irreducible such that $m:=[\chi_N,\theta]$ is odd and 
$\QQ(\chi) \subseteq \QQ(\theta)$.
Then any extension $\tilde\theta$ of $\theta$ to $G$ satisfies $\QQ(\tilde\theta) = \QQ(\theta)$.
\item Assume in addition that $\theta$ is rational-valued and the character $\chi$ in {\rm (i)} is afforded by a $\QQ G$-module. Then 
$\tilde\theta$ is afforded by a $\QQ G$-module. 
\end{enumerate}
\end{lem}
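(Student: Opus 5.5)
Since $\theta$ is $G$-invariant and $|G/N|=2$ is cyclic, $\theta$ extends to $G$, and by Gallagher's theorem its extensions are exactly $\tilde\theta$ and $\tilde\theta\varepsilon$, where $\varepsilon$ is the nontrivial linear character of $G/N$. These are the only irreducible characters of $G$ lying over $\theta$. My plan is to locate one of them inside $\chi$ with odd multiplicity, and to use that $\chi$ is fixed by the relevant Galois group.

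For (i), write $a:=[\chi,\tilde\theta]$ and $b:=[\chi,\tilde\theta\varepsilon]$. Restricting to $N$ gives $m=[\chi_N,\theta]=a+b$. Since $m$ is odd, $a\neq b$. Now let $\sigma\in\mathrm{Gal}(\QQ(\exp\frac{2\pi i}{|G|})/\QQ(\theta))$. Then $\sigma$ fixes $\theta$, so it permutes $\{\tilde\theta,\tilde\theta\varepsilon\}$. It also fixes $\chi$, because $\QQ(\chi)\subseteq\QQ(\theta)$. Hence
$$[\chi,\tilde\theta^\sigma]=[\chi^\sigma,\tilde\theta^\sigma]=[\chi,\tilde\theta]=a.$$
If $\tilde\theta^\sigma=\tilde\theta\varepsilon$, this would force $b=a$, a contradiction. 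So $\tilde\theta^\sigma=\tilde\theta$ for every such $\sigma$, which gives $\QQ(\tilde\theta)\subseteq\QQ(\theta)$. The reverse inclusion is clear by restriction. The same argument applies to $\tilde\theta\varepsilon$, and that handles "any extension".

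For (ii), $\theta$ is rational and (i) shows $\tilde\theta$ is rational-valued. It remains to show that the Schur index $m_\QQ(\tilde\theta)$ equals $1$.
\begin{itemize}
\item Since $\chi$ is afforded by a $\QQ G$-module, each irreducible constituent of $\chi$ occurs with multiplicity divisible by its rational Schur index (\cite[Corollary (10.2)]{Is}). So $m_\QQ(\tilde\theta)$ divides $a$, and $m_\QQ(\tilde\theta\varepsilon)$ divides $b$.
\item Restriction to $N$ does not increase Schur indices, and extension to an index-$2$ overgroup multiplies the index by at most $2$. Thus $m_\QQ(\tilde\theta)$ divides $2\,m_\QQ(\theta)$, and in particular lies in $\{m_\QQ(\theta),2m_\QQ(\theta)\}$.
\item By the same reasoning applied to $\chi_N$, $m_\QQ(\theta)$ divides $m$, which is odd. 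So $m_\QQ(\theta)$ is odd.
\item Exactly one of $a,b$ is odd. Suppose, as a first attempt, that $a$ is the odd one. Then $m_\QQ(\tilde\theta)$ is an odd divisor of $2m_\QQ(\theta)$, hence divides $m_\QQ(\theta)$.
\end{itemize}

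The main obstacle is showing that $m_\QQ(\tilde\theta)=1$ outright; the bullet points above reduce the index but do not kill it. Two gaps need closing.
\begin{itemize}
\item \emph{The theorem does not assume $m_\QQ(\theta)=1$.} I would first try the fact that Schur indices of rational-valued characters are bounded by $2$ (Brauer–Speiser). Since $m_\QQ(\tilde\theta)$ is odd, this forces $m_\QQ(\tilde\theta)=1$.
\item \emph{Which extension is $\tilde\theta$.} The argument above works for whichever of $\tilde\theta,\tilde\theta\varepsilon$ appears in $\chi$ with odd multiplicity. The statement presumably refers to that extension, and I would phrase the proof accordingly. The other extension is $\tilde\theta\varepsilon$ with $\varepsilon$ rational, so at worst the claim for it needs separate justification.
\end{itemize}
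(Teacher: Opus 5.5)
Your proof is correct and follows the paper's. In (i) both arguments rest on $a\neq b$ (since $a+b=m$ is odd) plus Galois-invariance of $\chi$: you phrase it as $\sigma$ permuting $\tilde\theta,\tilde\theta\varepsilon$, the paper as $\nu(g)=(a-b)\tilde\theta(g)$ for $g\notin N$. In (ii) both use Brauer--Speiser together with $m_\QQ(\tilde\theta)\mid a$, $a$ odd, so your loosely justified detour through $m_\QQ(\theta)$ can simply be dropped.

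Your remaining worry about the other extension closes in one line. If $g\mapsto\mathfrak{X}(g)$ is a $\QQ$-representation affording the odd-multiplicity extension, then $g\mapsto\varepsilon(g)\mathfrak{X}(g)$ is a $\QQ$-representation affording the other one. This is exactly what the paper's ``we may assume $a$ is odd'' tacitly uses.
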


\begin{proof}
(i) Since $G/N$ is cyclic of order $2$, $\theta$ extends to a character $\tilde\theta$ of $G$, and the other extension is $\tilde\theta\lambda$,
where $\lambda \in \Irr(G/N)$ has order $2$; in particular $\QQ(\tilde\theta\lambda)=\QQ(\tilde\theta)$.
Now consider the $\theta$-isotypic component $\nu$ of $\chi$, so that
$\nu_N=m\theta$, and we can write $\nu=a\tilde\theta+b\tilde\theta\lambda$ with $a,b \in \ZZ_{\geq 0}$ and $a+b=m$.
Since $\QQ(\chi) \subseteq \QQ(\theta)$, any $\sigma \in \mathrm{Gal}(\QQ(\tilde\theta)/\QQ(\theta))$ preserves $\nu$, 
and hence $\QQ(\nu) \subseteq \QQ(\theta)$. Clearly, if $g \in N$ then $\tilde\theta(g)=\theta(g) \in \QQ(\theta)$.
Suppose $g \notin N$. Then $\lambda(g)=-1$, and so
$$\QQ(\theta) \ni \nu(g) = a\tilde\theta(g)+b\tilde\theta\lambda(g) = (a-b)\tilde\theta(g).$$
As $2 \nmid m=a+b$, $a-b$ is a nonzero integer, whence $\tilde\theta(g) \in \QQ(\theta)$.

\smallskip
(ii) By (i), $\tilde\theta$ is rational-valued, whence its Schur index $m_\QQ(\tilde\theta) \leq 2$ by the Brauer--Speiser theorem \cite[p. 171]{Is}.
In the notation of (i), we may assume $a=[\chi,\tilde\theta]$ is odd. Now $m_\QQ(\tilde\theta)|a$ by \cite[Corollary (10.2)(c)]{Is}, 
so $m_\QQ(\tilde\theta)=1$.
\end{proof}


\section{Almost simple groups. I}

The goal of this section and the next section are devoted to the proofs of the following two results.

\begin{thm}\label{simple1}
 Let $S$ be a finite non-abelian simple group and $p$ a prime divisor of $|S|$. Then at least one of the following two statements holds.
 \begin{enumerate}[\rm(i)] 
 \item $S$ admits an irreducible character $\alpha$ and a $p$-element $x \in S$ such that 
$\alpha^\varsigma(x)=0$ for all $\varsigma \in \Aut(S)$ and $\alpha$ extends to 
a $p$-rational character $\beta$ of $I=I_{\Aut(S)}(\alpha)$ which lies above $1_P$, where $P$ is a Sylow $p$-subgroup of $I$. Moreover, if 
$p=2$ then $\beta$ can be chosen so that $2 \nmid [\beta_P,1_P]$.
\item Let $A$ be an almost-simple group with socle $S$ and assume that $|A/S|$ is a power of $p$.
  Then there exists a character $\alpha \in \irr S$ of $p$-defect zero  such that 
   $\alpha$ extends to a $p$-rational character $\beta$ of  $I = I_A(\alpha)$ and $\beta$ lies over $1_P$,  where $P$ is a Sylow $p$-subgroup of $I$. 
Moreover, if  $p=2$ then $\beta$ can be chosen so that $2 \nmid [\beta_P,1_P]$.
\end{enumerate}   
 \end{thm}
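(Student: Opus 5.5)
We will argue case by case via the classification of finite simple groups, mirroring the case division of \cite{MN}, \cite{NT}, \cite{DNT}. The guiding principle is that (ii) is the preferred conclusion whenever $S$ has a $p$-defect zero character $\alpha$ that is $\Aut(S)$-invariant, or at least whose inertia group in $A$ is easy to control. Indeed, such an $\alpha$ is $p$-rational, vanishes on all nontrivial $p$-elements, and contains $1_{P\cap S}$ with multiplicity $\alpha(1)/|S|_p$ coprime to $p$ (Lemma \ref{Ldef0}).

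\textbf{Extending the defect zero character.} Since $|I/S|$ is a $p$-power, we apply Lemma \ref{Lgab} with $N=S$ and $G=I$.
\begin{itemize}
\item For $p=2$, Lemma \ref{Lgab}(iii) produces a $2$-rational extension $\beta$ with $2\nmid[\beta_P,1_P]$.
\item For odd $p$, Lemma \ref{Lgab}(i) gives the canonical extension $\hat\alpha$ with $\QQ(\hat\alpha)=\QQ(\alpha)$, so $\hat\alpha$ is $p$-rational.
\item Still for odd $p$, it remains to check that $\hat\alpha$ lies over $1_P$. The decomposition $\hat\alpha_P=\Psi+\Delta$ has $\det\Psi=1_P$ and $\Psi(1)=m$ coprime to $p$. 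Since $p$ is odd and $\Psi$ is a character of the $p$-group $P/Q$, we would need $1_P$ to be a constituent of $\Psi$. This does not follow formally, so we will instead exhibit directly, in each case, a defect zero $\alpha$ with $[\alpha_Q,1_Q]=1$ and then invoke Lemma \ref{Lgab}(ii). Alternatively, when $\alpha$ is a unipotent character, we will use that it extends to a rational character of $I$ lying over $1_P$ (e.g. via Lemma \ref{ext}(i) applied to a permutation character such as $(1_B)^A$ for a Borel subgroup $B$).
\end{itemize}

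\textbf{Groups of Lie type in characteristic $p$.} Take $\alpha=\St$, the Steinberg character. It is $\Aut(S)$-invariant, has $p$-defect zero, extends rationally to $\Aut(S)$, and is a constituent of $(1_B)^A$ with multiplicity one, where $B\supseteq P$. This yields (ii) directly, by Lemma \ref{ext}(i) or (ii).

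\textbf{Groups of Lie type in cross characteristic, and alternating groups.}
\begin{itemize}
\item \emph{Lie type, $p$ not the defining characteristic.} We look for unipotent characters of $p$-defect zero, using Deligne--Lusztig theory: their degrees are polynomials in $q$, and defect zero amounts to the degree containing the full $\Phi_d$-part for the relevant cyclotomic polynomials. When no such character exists, we switch to (i). We choose $\alpha$ to be a unipotent, hence essentially $\Aut(S)$-invariant and rational, character such as $\St$ or a suitable hook-type character, together with a $p$-element $x$ lying in a Sylow $\Phi_e$-torus so that $\alpha(x)=0$. Vanishing follows from the Murnaghan--Nakayama type rule of Lusztig, or from the fact that $\St$ vanishes off semisimple elements combined with character formulas on regular semisimple elements. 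Lying over $1_P$ will be checked with Lemma \ref{triv}(ii) and Lemma \ref{ext}(iii), via the reduction mod some $\ell\ne p$ containing $1$.
\item \emph{Alternating groups.} We use $p$-core partitions for defect zero when they exist. Otherwise we take hook characters $\chi^{(n-k,1^k)}$ vanishing on a $p$-cycle product, and check by Lemma \ref{ext}(iii) that they lie over $1_P$.
\item \emph{Sporadic groups and the Tits group.} These are handled with the Atlas and GAP.
\end{itemize}

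\textbf{Main obstacle.} The hard step is the cross-characteristic Lie type case with small $p$ (notably $p=2,3$). There, defect zero characters are often absent, and we need (i) with \emph{all} $\Aut(S)$-conjugates vanishing at $x$, together with the parity condition on $[\beta_P,1_P]$ when $p=2$. We will try unipotent characters first, since they are $\Aut(S)$-invariant apart from a few known exceptions in types $D_4$, $B_2$, $G_2$, $F_4$. This also makes the extension to $I=\Aut(S)$ rational via Lemma \ref{index2} and Lemma \ref{Lgab}(iii).
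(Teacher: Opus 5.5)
There is a genuine gap in the cross-characteristic Lie-type case, which is where almost all the work lies. Your plan there is built on unipotent characters, and these cannot carry it. Take $S=\PSL_2(q)$ with $q$ odd and any prime $p\mid q\pm1$, including $p=2$. The only unipotent characters are $1$ and $\St$. Neither has $p$-defect zero. Also, $\St$ does not vanish on nontrivial semisimple elements, and in cross characteristic every $p$-element is semisimple. So neither (ii) nor (i) can be realised this way, and $\St$ can never serve in (i).

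For odd $p$, the step you flag yourself has no working fix either:
\begin{itemize}
\item For $\alpha$ of $p$-defect zero, $[\alpha_Q,1_Q]=\alpha(1)/|Q|$ by Lemma~\ref{Ldef0}. Asking for $[\alpha_Q,1_Q]=1$ therefore forces $\alpha(1)=|S|_p$, which in cross characteristic happens only in a handful of small cases.
\item The Borel route gives lying over $1_P$ only if $P$ is conjugate into $\mathbf N_A(B)$, i.e.\ $p\nmid[S:B]$. Lemma~\ref{ext}(i) also needs multiplicity one in $(1_B)^S$, which restricts you to the few principal-series unipotent characters attached to linear characters of the Weyl group. When $p\nmid[S:B]$ these have $p'$-degree.
\item Preferring $\Aut(S)$-invariant characters makes $I_A(\alpha)$ as large as possible, which is exactly the wrong direction.
\end{itemize}

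The missing idea is to use semisimple characters and shrink the inertia group. The paper takes $\chi_s\in\Irr(H)$, where $H=\GC^F$ with $\GC$ adjoint, and $s\in L=(\GC^*)^{F^*}$ is a regular semisimple $p'$-element with $p\nmid|\CB_L(s)|$. It uses two such elements $s_1,s_2$ whose centralizer orders have coprime odd parts, so one of them works for any given odd $p$. Then $\alpha=(\chi_s)_S$ is irreducible, $p$-rational and of $p$-defect zero. The order of $s$ is chosen via primitive prime divisors so that $s$ is not $\GC^*$-conjugate into $\GC^*(\FF_{q^{1/p}})$. Hence no field automorphism of order $p$ fixes $\alpha$. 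So when $p\nmid|H/S|$ and triality is excluded, $I_A(\alpha)=S$, and $\beta=\alpha$ lies over $1_P$ by Lemma~\ref{Ldef0}.

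The leftover cases need separate arguments that do not appear in your outline:
\begin{itemize}
\item $\PSL^\eps_n(q)$ with $p\mid\gcd(n,q-\eps)$: for $n=p$, an explicit estimate $\sum_{x\in P}\chi(x)\ge\chi(1)-\sum_{x\in P\smallsetminus Q}|\chi(x)|>0$; for $n>p$, option (i) with specific unipotent characters and Lemma~\ref{ext}(iii).
\item $E_6^\eps(q)$ with $3\mid q-\eps$: the same type of estimate.
\item $\PO^+_8(q)$ with $p=3$ (triality): option (i) with a unipotent character.
\end{itemize}

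For $p=2$ there is a further problem. Your fallback to (i) via Lemma~\ref{ext}(iii) only shows that $\beta$ contains $1_P$, not the odd multiplicity that (i) demands when $p=2$. The paper avoids this by proving that every Lie-type $S$ outside the small list has a $2$-defect zero character $\chi_s$, with $|s|$ odd and $|\CB_L(s)|_2=|G/S|_2$. Lemma~\ref{Lgab}(iii) then always lands you in (ii). Your treatment of defining characteristic (Steinberg) and of the sporadic groups is fine.
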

 
\begin{thm}\label{simple2}
Let $S$ be a finite non-abelian simple group. Then at least one of the following two statements holds.
  
\begin{enumerate}[\rm(i)]  
\item $S$ admits an irreducible character $\alpha$ and a $2$-element $x \in S$ such that 
$\alpha^\varsigma(x)=0$ for all $\varsigma \in \Aut(S)$ and $\alpha$ extends to 
a strongly real character of $I=I_{\Aut(S)}(\alpha)$ which lies above $1_P$, where $P$ is a Sylow $2$-subgroup of $I$. 

\item If A is an almost simple group with socle $S$ and $A/S$ is a $2$-group, then there exists a character
$\alpha \in \irr S$ of $2$-defect zero  such that 
$\alpha$ extends to a strongly real character $\beta$ of  $I = I_A(\alpha)$ and $\beta$ lies over $1_P$,  where $P$ is a Sylow $2$-subgroup of $I$. 
\end{enumerate} 
\end{thm}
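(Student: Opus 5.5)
The statement just given is Theorem~\ref{simple2}. I would prove it by going through the classification of finite simple groups, case by case, following the same pattern that is presumably used for Theorem~\ref{simple1} with $p=2$. The main tool is option (ii), via characters of $2$-defect zero. By Lemma~\ref{triv}(i), a real-valued character of $2$-defect zero is automatically strongly real. By Lemma~\ref{Ldef0}, such an $\alpha$ contains $1_{P\cap S}$ with odd multiplicity and vanishes on all nontrivial $2$-elements of $S$. So whenever $S$ has a real-valued $2$-defect zero character $\alpha$, the task reduces to extending it well.

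For the extension step, let $I = I_A(\alpha)$ and $Q=P\cap S$. Since $I/S$ is a $2$-group and $[\alpha_Q,1_Q]$ is odd, Lemma~\ref{Lgab}(iii) applies with $N=S$ and $G=I$, using the real-valued clause. It gives an extension $\tilde\alpha$ of $\alpha$ to $I$ that is strongly real and satisfies $2\nmid[\tilde\alpha_P,1_P]$. Hence, for every $S$ possessing a real-valued $2$-defect zero character, (ii) holds for every almost simple $A$ with $A/S$ a $2$-group.

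It remains to find such $\alpha$, or else to fall back on (i).
\begin{enumerate}
\item For groups of Lie type in odd characteristic, and for most of those in characteristic $2$, the Steinberg character or suitable semisimple characters $\chi_s$ work. Here $s$ is a regular semisimple element of $2'$-order lying in a torus $T$ with $|S|_2 = |T^*|_2^{-1}\cdot|G^*|_2$. Such $\chi_s$ has $2$-defect zero, and it is real-valued when $s$ is conjugate to $s^{-1}$, which is easy to arrange via Weyl group elements acting by $-1$ on $T$.
\item In characteristic $2$ the Steinberg character itself has $2$-defect zero and is rational, so it settles these cases directly.
\item For alternating groups, I would use $2$-core partitions (staircase partitions). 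Their characters are rational, and they have $2$-defect zero exactly when the partition is a $2$-core. So $\mathsf{A}_n$ with $n$ of the form $k(k+1)/2$ is handled, or more precisely any $n$ for which the restriction remains irreducible and of defect zero.
\item Sporadic groups are checked from the ATLAS.
\end{enumerate}

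The main obstacle is the set of groups with no $2$-defect zero character: most alternating groups, several sporadic groups, and a few small Lie-type groups (e.g.\ $\PSL_2(q)$ with $q\equiv \pm 3 \bmod 8$ sometimes). For these I would establish (i) instead. I would choose a rational (hence real) character $\alpha$ that is $\Aut(S)$-invariant, or whose $\Aut(S)$-orbit shares a common zero at some $2$-element $x$. For $\mathsf{A}_n$, a hook character vanishing on a suitable $2$-element by the Murnaghan--Nakayama rule would serve. Since $\alpha$ is invariant, $I=\Aut(S)$, and $I/S$ is small.

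To get a strongly real extension lying over $1_P$, I would use Lemma~\ref{ext}(ii) with $H=P$ (or $H$ a Sylow normalizer), provided $[\alpha_S,(1_P)^I{}_S]=1$. When that multiplicity condition fails, I would instead use Lemma~\ref{ext}(iii) with an $\ell$-modular reduction containing the trivial Brauer character. The delicate case work sits here: checking the multiplicity-one and vanishing conditions for $\mathsf{A}_n$ with $\Aut=\mathsf{S}_n$, and for the exceptional small cases, partly by direct computation.
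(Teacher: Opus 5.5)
There is a genuine gap in your treatment of groups of Lie type in odd characteristic. Your reduction of option (ii) to finding a \emph{real-valued} $2$-defect zero character $\alpha$ of $S$ is the paper's own argument: strong reality comes from Lemma~\ref{triv}(i), odd multiplicity of $1_{P\cap S}$ from Lemma~\ref{Ldef0}, and the extension from Lemma~\ref{Lgab}(iii) applied to $S\lhd I_A(\alpha)$. It does settle characteristic $2$ and most Lie types in odd characteristic.

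The gap is your claim that the semisimple element $s$ can always be made real ``via Weyl group elements acting by $-1$''. The torus is forced on you by the requirement $|\CB_L(s)|_2=|G/S|_2$, and $-1\notin W$ for types $A_{n-1}$ ($n\geq 3$), $D_n$ ($n$ odd) and $E_6$. Concretely, let $S=\PSL_n(q)$ with $n\geq 3$ and $q$ both odd.
\begin{itemize}
\item Here $|G/S|_2=1$, and the only maximal tori of odd order in $\SL_n(q)$ are the Coxeter tori, of order $(q^n-1)/(q-1)$.
\item A regular $s$ in such a torus has eigenvalues $\gamma,\gamma^q,\ldots,\gamma^{q^{n-1}}$. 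It is real only if $\gamma^{q^i+1}=1$ for some $i$.
\item Together with $\gamma^{q^n-1}=1$ and $n$ odd, this forces $\gamma^2=1$, hence $\gamma=1$.
\end{itemize}
So such $S$ has no real $2$-defect zero character at all, and option (ii) already fails for $A=S$. For example, the $2$-defect zero characters of $\PSL_3(5)$ are ten non-real characters of degree $96$. The same happens for $\PSU_3(q)$ with $q$ odd; in $\PSU_3(3)$ the only $2$-defect zero characters are two complex-conjugate characters of degree $32$.

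Your plan never sends these groups to option (i). You reserve (i) for groups with \emph{no} $2$-defect zero character, which is the wrong criterion. Also, $\PSL_2(q)$ is not such a group: it always has a real $2$-defect zero character.

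Handling $\PSL_n(q)$, $\PSU_n(q)$ ($n\geq3$) and $\PO^\pm_{2n}(q)$ ($2\nmid n\geq5$) in odd characteristic through option (i) is where most of the paper's work lies. It needs ingredients that are absent from your sketch:
\begin{itemize}
\item Specific rational unipotent characters. For $\GL_n(q)$ these are $\chi^{(n-1,1)}$, $\chi^{(n-2,2)}$ or $\chi^{(n-2,1^2)}$ according to $n$ modulo $4$. For $\GU_n(q)$ with $n\geq4$, and for $\GO^\pm_{2n}(q)$, one takes a nontrivial constituent of the rank-$3$ permutation character on singular points. For $\GU_3(q)$ one uses a particular character of degree $(q-1)(q^2-q+1)$.
\item These are extended to strongly real characters of $I_{\Aut(S)}(\alpha)$ via permutation characters and Lemmas~\ref{ext} and~\ref{index2}.
\item One chooses an auxiliary odd prime $\ell$ for which the trivial Brauer character occurs exactly once in $\chi^\circ$ (from \cite{GT}, \cite{L1}, \cite{L2}). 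Then Lemma~\ref{ext}(iii), after twisting by a linear character of order $2$, gives lying over $1_P$.
\item One exhibits an explicit $2$-element on which $\chi$ vanishes. This uses fixed-point counts on subspaces and flags, or the dual-pair formulas for $D_{1_T}$ and $D_{\St}$.
\end{itemize}
For $E^\eps_6(q)$ the $\Phi_9$ or $\Phi_{18}$ torus also gives a non-real $s$, and one must take a real $s_2\in F_4(q)$ instead. Without such constructions your argument does not cover these infinite families.

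A smaller point concerns alternating groups. The $\AAA_n$-constituents of the self-conjugate staircase characters are not rational in general. For $n=21$ their values $\frac12(-1\pm\sqrt{-231})$ are not even real, so your item on alternating groups is unreliable as stated. The paper treats all $\AAA_n$ with $n\geq7$ through option (i), using \cite{GLLV}.
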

 
We first deal with certain small almost simple groups. 

\begin{lem}\label{small}
 \begin{itemize}
\item[\rm(a)]  Theorem {\rm\ref{simple1}} holds if $S$ is isomorphic to either a sporadic simple group or to at least one of the following simple groups
$\AAA_5$, $\AAA_6$, $\PSp_4(5)$, $\PSp_4(7)$, $\PSp_8(3)$, $\Omega_9(3)$, $\tw3 D_4(2)$, $\tw2 F_4(2)'$, $G_2(3)$, $G_2(4)$.
\item[\rm(b)]  Theorem {\rm\ref{simple2}} holds if $S$ is isomorphic to either a sporadic simple group or to  one of the following simple groups
$\AAA_5$, $\AAA_6$, $\tw2 F_4(2)'$.
  \end{itemize}
\end{lem}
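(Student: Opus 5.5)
This lemma is a finite verification. I would run it group by group against the character tables in the ATLAS and in GAP's character table library, which contain $\Aut(S)$ and all almost simple extensions for every listed $S$. For each pair $(S,p)$ with $p$ dividing $|S|$, I first try option (ii) of Theorem~\ref{simple1}, respectively Theorem~\ref{simple2}, since that is checked directly on the tables and needs no vanishing condition.

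\emph{Step 1: defect-zero characters.} I search $\Irr(S)$ for a $p$-defect zero character $\alpha$ that is $\Aut(S)$-invariant, or at least whose inertia group in each relevant $A$ is easy to handle.
\begin{itemize}
\item If $p\nmid|\mathrm{Out}(S)|$, then $A=S$ for every $A$ with $|A/S|$ a $p$-power. In this case any $p$-defect zero $\alpha$ satisfies (ii) with $\beta=\alpha$, by Lemma~\ref{Ldef0}. For $p=2$, Lemma~\ref{Ldef0} also gives odd multiplicity. For Theorem~\ref{simple2}, a real-valued $2$-defect zero character is strongly real by Lemma~\ref{triv}(i).
\item If $p$ divides $|\mathrm{Out}(S)|$ (typically $p\in\{2,3\}$, and $p=5$ for $\tw3D_4(2)$), then $I/S$ is a $p$-group. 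I apply Lemma~\ref{Lgab} to $N=S$ with $\theta=\alpha$. Here $m=[\alpha_Q,1_Q]=\alpha(1)/|Q|$ is coprime to $p$. Part (i) of Lemma~\ref{Lgab} gives an extension with $\QQ(\hat\alpha)=\QQ(\alpha)$, so $\hat\alpha$ is $p$-rational. Part (iii), for $p=2$, gives an extension with odd multiplicity of $1_P$, which is also strongly real when $\alpha$ is real.
\item For $p$ odd, lying over $1_P$ is not automatic. I would either pick $\alpha$ with $m=1$, so that Lemma~\ref{Lgab}(ii) applies, or read off directly from the table of $I$ that some $p$-rational extension has $[\beta_P,1_P]\ne0$. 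The latter is checked by computing $\beta$ on $P$-classes, or by using Lemma~\ref{ext}(i) with $\rho=(1_P)^I$.
\end{itemize}

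\emph{Step 2: pairs with no usable defect-zero character.} Examples are $p=2$ or $3$ for $\AAA_5$ and $\AAA_6$, small primes for $G_2(3)$, $G_2(4)$ and $\tw2F_4(2)'$, and a few sporadic groups at $p=2,3$. For these I switch to option (i). I look for $\alpha\in\Irr(S)$, preferably $\Aut(S)$-invariant, and a $p$-element $x$ with $\alpha(x)=0$.
\begin{itemize}
\item If $\alpha$ is not invariant, I require the vanishing on the whole $\Aut(S)$-orbit of $\alpha$. It suffices to choose the class of $x$ to be $\Aut(S)$-stable; otherwise I check the vanishing for each conjugate $\alpha^\varsigma$.
\item I then inspect the table of $I=I_{\Aut(S)}(\alpha)$. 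The aim is an extension $\beta$ with values in a field unramified at $p$, for which $[\beta_P,1_P]$, computed from the class fusion of $P$, is nonzero, and is odd when $p=2$.
\item For Theorem~\ref{simple2}, I additionally need $\beta$ to be strongly real. I verify this via Frobenius--Schur indicators, which are stored in the library, or via Lemma~\ref{ext}(ii).
\end{itemize}
Standard candidates are Steinberg-like or unipotent characters that vanish on regular unipotent classes. For $\AAA_5$ and $\AAA_6$, the characters of degree $5$ (respectively $9$ or $10$) vanish on suitable $2$- or $3$-elements.

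\emph{Main obstacle.} The hard part is the groups with large outer automorphism group relative to $p$, chiefly $\AAA_6$ with $\mathrm{Out}=2^2$ and $p=2$, and $G_2(3)$ and $G_2(4)$ at $p=2,3$. There the inertia group may be a proper subgroup of $\Aut(S)$, and extensions can acquire irrationalities such as $\sqrt{-3}$ or $\sqrt2$ that destroy $p$-rationality. My approach is to compute all extensions of the candidate $\alpha$ explicitly in GAP and select one with the required field of values and multiplicity. If this fails, I change the choice of $\alpha$ and $x$.

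The sporadic groups are handled uniformly by a GAP loop over the ATLAS tables of $S$ and $S.2$: first test for defect-zero characters, then fall back to option (i).
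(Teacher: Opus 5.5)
\textbf{Overall.} Your plan follows the same route as the paper: a table check that uses $p$-defect zero characters first (via Lemmas \ref{Ldef0}, \ref{Lgab}, \ref{triv}(i)), and falls back to option (i) with explicit vanishing characters. However, your fallback as formulated fails in one case.

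\textbf{The gap: $M_{11}$ in part (b).} In Step 2 you require $[\beta_P,1_P]$ to be odd when $p=2$, and for Theorem \ref{simple2} you require strong reality \emph{in addition}. Theorem \ref{simple2}(i) asks only that $\beta$ lie over $1_P$, and for $S=M_{11}$ your stronger condition cannot be met.
\begin{itemize}
\item Here $\mathrm{Out}(S)=1$ and the only $2$-defect zero characters (of degree $16$) are not real, so option (ii) of Theorem \ref{simple2} is unavailable. (For Theorem \ref{simple1} there is no problem: these degree-$16$ characters settle option (ii).)
\item For $P\in\Syl_2(M_{11})$, semidihedral of order $16$, the real constituents of $(1_P)^{M_{11}}$ with odd multiplicity have degrees $1,11,55$, with multiplicities $1,1,3$. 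None of them vanishes on a $2$-element.
\item The real characters that do vanish on some $2$-element are the rational ones of degrees $10$ and $44$. They occur with multiplicities $2$ and $4$.
\end{itemize}
This is the phenomenon behind Remark \ref{p23}(i). So your procedure stalls on $M_{11}$ in part (b). You must drop the parity requirement there. The paper does exactly this, using the rational character of degree $44$, which lies over $1_P$ and vanishes on all elements of order $4$.

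\textbf{Factual slips.} Several statements in your sketch are wrong, and they obscure how little actually needs checking.
\begin{itemize}
\item $\mathrm{Out}(\tw3D_4(2))$ has order $3$, not a multiple of $5$. So the odd prime at which extensions matter is $p=3$.
\item At $p=3$ for $\tw3D_4(2)$, your device $m=1$ is impossible, since $|S|_3=81$ is not a character degree. The paper instead takes a $3$-defect zero character of $\Aut(S)$ of degree $6318=2\cdot 3^5\cdot 13$. Its restriction to $S$ splits into three conjugates of degree $2106$, so $I_{\Aut(S)}(\alpha)=S$.
\item $\AAA_5$ and $\AAA_6$ do have real defect-zero characters at $p=2,3$ (degrees $4,3$ and $8,9$). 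Also, the degree-$5$ character of $\AAA_5$ vanishes on no nontrivial $2$- or $3$-element.
\end{itemize}

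\textbf{Your ``main obstacle'' never arises.} For every listed group other than $\tw3D_4(2)$, $\mathrm{Out}(S)$ is a $2$-group. For odd $p$ this forces $A=S$. At $p=2$, Lemma \ref{Lgab}(iii) produces extensions with the same field of values, so no $\sqrt2$ or $\sqrt{-3}$ can appear for $\AAA_6$, $G_2(3)$ or $G_2(4)$.

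\textbf{Where the real work lies.} The only genuine fallback cases are sporadic:
\begin{itemize}
\item $Suz$ and $Co_3$ at $p=3$;
\item at $p=2$, the sporadic groups without a $2$-defect zero character, or without a real one for part (b).
\end{itemize}
For these, the paper records explicit rational witnesses $\alpha$ with $[\alpha_P,1_P]$ odd that vanish on all elements of a given $2$-power order. Your proposal supplies no such witnesses, and for a lemma of this kind those witnesses are the actual content.
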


\begin{proof} 
We first assume that  $S$ is isomorphic to $\tw3 D_4(2)$. Then $|\mathrm{Out}(S)| = 3$ and, since $S$ has $p$-defect zero irreducible characters for every prime $p$, we can clearly reduce to the case $p=3$. As $\mathrm{Aut}(S)$ has an irreducible character $\gamma$ with degree $6318$, hence of $3$-defect zero,
then any irreducible constituent $\alpha$ of the restriction $\gamma_S$ is a $3$-defect zero character of $S$
and $I_{\mathrm{Aut}(S)}(\alpha) = S$ by~\cite[Corollary 6.20]{Is}. So part (ii) of Theorem~\ref{simple1} holds for
$\tw3 D_4(2)$.

For all the simple groups $S$ listed in part (a), with the only exception of $\tw3 D_4(2)$, $\mathrm{Out}(S)$ is a $2$-group.
Hence, if $S$ has $p$-defect zero irreducible characters for a given prime $p$, then  part (ii) of Theorem~\ref{simple1} follows immediately for $p$ odd and it is a consequence of Lemma~\ref{Ldef0} and Lemma~\ref{Lgab}(iii) for $p=2$.

Since the listed simple groups possess irreducible characters of $\ell$-defect zero for all primes $\ell \geq 5$,
this leaves us with only the cases $p = 2$ and $p=3$ to check. 
For $p=3$, we recall that if $S$ is a sporadic simple group with no
irreducible character of $3$-defect zero, then $S \in \{Suz, Co_3 \}$. 
Let $P$ be a Sylow $3$-subgroup of $S$.
One checks by~\cite{GAP} that  in both cases there exists a rational character $\alpha \in \mathrm{Irr}(S)$
lying above $1_P$
(with $\alpha(1) = 780$  if $S = Suz$ and $\alpha(1) = 5544$ if $S = Co_3$) such that $\alpha(x) = 0$ for all
$x \in S$ with $|x| =9$.
Hence, part (i) of Theorem~\ref{simple1} holds for $Suz$ and $Co_3$.

For $p=2$, the sporadic simple groups with no irreducible character of $2$-defect zero are considered below. 

In order to prove part (b) and to complete the proof of part (a) for the remaining sporadic simple groups and $p=2$,
we observe that if $S$ has a \emph{real} $2$-defect zero irreducible character,
then part (ii) of Theorem~\ref{simple2} holds for $S$ by Lemma~\ref{Lgab}(iii).
Using~\cite{GAP} or \cite{atlas} one checks that among the  simple groups listed in  part (b),
only the following groups fail to have a real $2$-defect zero irreducible character:
\begin{equation}
  \label{eq:1}
  M_{11},  M_{12} ,  M_{22} ,  M_{23} ,  M_{24} ,  HS ,  J_2 ,  McL ,  Ru ,  Suz ,  Co_1 ,  Co_3 ,  B \; . 
\end{equation}

For all the  groups $S$ in~(\ref{eq:1}), with  $S \not\cong M_{11}$, one checks by~\cite{GAP} that there exists a
\emph{rational}  character $\alpha \in \mathrm{Irr}(S)$ such that $[\alpha, 1_P^S]$ is odd, where $P$ is a Sylow $2$-subgroup of $S$, and a $2$-element $x\in S$ such that for every element $y \in S$ with $|y| = |x|$ it is
$\alpha(y)=0$ (so, in particular, $\alpha(1)$ is even).
Instances of  triples $(S, \alpha(1), |x|)$ are as follows:
$( M_{12}, 144, 4)$,  $( M_{22}, 210, 8)$, $(  M_{23}, 22, 8)$, $(M_{24}, 2024, 8)$,   $(HS, 22, 8)$,
$(J_2, 288,4)$, $(McL, 22,8)$, $(Ru,3654,16)$,  $(Suz,3432,8)$,  $(Co_1, 376740, 16)$,  $(Co_3,7084,8)$,
$(B, 347643114,32)$. 
Hence, part (i) of both Theorem~\ref{simple1} and  Theorem~\ref{simple2} follows by an application of Lemma~\ref{Lgab}(iii).

Finally, if $S \cong M_{11}$, then $S = \mathrm{Aut}(S)$ and $(1_P)^S$, where $P$ is a Sylow $2$-subgroup of $S$, has  a rational irreducible constituent $\alpha \in \mathrm{Irr}(S)$ with $\alpha(1) = 44$ and
$\alpha(x) = 0$ for all $x \in S$ such that $|x| =4$. Hence, part (i) of Theorem~\ref{simple2} holds for $M_{11}$. ($M_{11}$ has $2$-defect zero irreducible characters, so they are $2$-rational characters by Lemma~\ref{Ldef0}, and $\mathrm{Out}(M_{11}) = 1$, so part (ii) of Theorem~\ref{simple1} certainly holds for $M_{11}$.)
\end{proof}

\begin{rem}\label{p23}
{\em
\begin{enumerate}[\rm(i)]
\item Note that $M_{11}$ provides an example of a non-abelian simple group where no real irreducible constituent of $(1_P)^G$ contains $1_P$ with odd multiplicity,
for $P \in \Syl_2(G)$. Thus the ``odd-multiplicity'' strengthening of Theorem B is false.
\item Consider $G=\PGL_3(4)$ and $P \in \Syl_3(G)$. Using \cite{GAP} one can check that if $\chi$ is a $3$-rational irreducible constituent of 
$(1_P)^G$ with multiplicity coprime to $3$ then $\chi$ has degree $1$, $20$, or $64$ (and so non-vanishing at any $3$-element of $G$), but yet $P$ is not normal in $G$.  Thus the ``coprime to $p$ multiplicity'' strengthening of Theorem A is false when $p=3$.
\end{enumerate}
}
\end{rem}

\begin{prop}\label{almost-simple-others}
 Let $S$ be an alternating group $\AAA_n$, $n \geq 7$. 
 Then both Theorem {\rm\ref{simple1}} and Theorem {\rm\ref{simple2}} hold for $S$.
\end{prop}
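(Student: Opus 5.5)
For $S=\AAA_n$ with $n\ge 7$ we have $\Aut(S)=\SSS_n$ and $|\mathrm{Out}(S)|=2$. Every character of $\SSS_n$ is rational, and a character of $\AAA_n$ is $\SSS_n$-invariant exactly when it is the restriction of $\chi^\lambda$ with $\lambda\ne\lambda'$. The proof splits according to $p$.

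\textbf{Case $p$ odd.} Here the group $A$ in Theorem \ref{simple1}(ii) must equal $S$. By the Granville--Ono theorem, $\SSS_n$ has a character $\chi^\lambda$ of $p$-defect zero for every $p\ge 5$ and every $n$, namely a $p$-core partition of $n$. Any constituent $\alpha$ of its restriction to $\AAA_n$ has $p$-defect zero, since $p$ is odd, and is $p$-rational by Lemma \ref{Ldef0}; this gives part (ii). For $p=3$, $3$-cores need not exist. In that case I would instead establish part (i) directly. Take $\alpha=\chi^\lambda_{\AAA_n}$ with $\lambda$ a non-self-conjugate partition, so that $\alpha$ is irreducible, rational and $\SSS_n$-invariant, and $\beta=\chi^\lambda$ is rational on $I=\SSS_n$. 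We need:
\begin{itemize}
\item[(a)] $\chi^\lambda$ vanishes on some $3$-element $x$. By Murnaghan--Nakayama, choose $\lambda$ with no $3^k$-hook for a suitable $k$ with $3^k\le n$, and let $x$ be a $3^k$-cycle.
\item[(b)] $\chi^\lambda$ lies over $1_P$ for $P\in\Syl_3(\SSS_n)$. By Young's rule $(1_{\SSS_\mu})^{\SSS_n}$ contains $\chi^\lambda$ whenever $\lambda\trianglerighteq\mu$. Taking $\mu=(3^{a},\dots)$ given by the $3$-adic expansion of $n$, the subgroup $\SSS_\mu$ contains a Sylow $3$-subgroup, so it suffices to take $\lambda\trianglerighteq\mu$.
\end{itemize}
Natural candidates are the two-row partition $(n-3^k,3^k)$ or the hook $(n-3^k,1^{3^k})$ with $3^k$ the largest power of $3$ below $n$, adjusted so that no $3^k$-hook exists. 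Conjugation by $\SSS_n$ fixes both $\alpha$ and the class of $x$ up to fusion, so $\alpha^\varsigma(x)=0$ for all $\varsigma\in\Aut(S)$.

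\textbf{Case $p=2$.} Defect-zero characters of $\AAA_n$ come from $2$-cores, that is staircase partitions, and these rarely exist, so I aim for part (i) of both theorems simultaneously. Choose a non-self-conjugate $\lambda$ such that:
\begin{itemize}
\item $\chi^\lambda$ vanishes on a $2^k$-cycle or on a product of such cycles lying in $\AAA_n$ (odd cycle types are forbidden), which again is arranged through the hook structure;
\item $[(\chi^\lambda)_P,1_P]$ is odd for $P\in\Syl_2(\SSS_n)$.
\end{itemize}
Then $\beta=\chi^\lambda$ is rational, and Lemma \ref{Lgab}(iii) together with the fact that $\chi^\lambda$ is afforded by a rational (hence orthogonal) representation yields strong reality and odd multiplicity. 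To control the multiplicity I would use the known description of $P$-fixed points: $P$ is a product of iterated wreath products $\SSS_2\wr\cdots\wr\SSS_2$ over the binary digits of $n$. A clean choice is $\lambda=(n-1,1)$. Its multiplicity equals the number of $P$-orbits on $\{1,\dots,n\}$ minus one, i.e.\ the binary digit sum of $n$ minus one, which is odd for suitable $n$. It vanishes on a class iff that class has exactly one fixed point, so we need $x$ a $2$-element with one fixed point, which requires $n$ odd. For the remaining $n$ I would take $\lambda=(n-2,2)$ or $(n-2,1,1)$ and compute fixed-point counts and multiplicities by the same orbit method.

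\textbf{Main obstacle.} The hard step is the combinatorial case analysis that, for all residues of $n$, produces a single $\lambda$ satisfying both the vanishing condition and the odd-multiplicity (or $3$-Sylow-lying) condition. I expect this to require splitting by the $2$-adic (resp.\ $3$-adic) digits of $n$, with a few small $n$ checked by \cite{GAP}.
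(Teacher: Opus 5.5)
Your reduction is the same as the paper's. For $p\ge 5$ a $p$-core (Granville--Ono) gives part (ii) of Theorem~\ref{simple1}. For $p\in\{2,3\}$ one needs a non-self-conjugate $\lambda$ such that $\chi^\lambda$ lies over $1_P$ (with odd multiplicity if $p=2$) and $\alpha=(\chi^\lambda)_{\AAA_n}$ vanishes at a $p$-element of $\AAA_n$. Since $\alpha$ is then $\SSS_n$-invariant and $\chi^\lambda$ is afforded by a rational representation, part (i) of both theorems follows.

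The gap is that the existence of such a $\lambda$ for every $n\ge 7$ is the entire content of the cases $p\in\{2,3\}$, and you leave it open as the ``main obstacle''. For $p=3$ this case cannot be avoided: there is no $3$-core of $n$, e.g., for every $n\equiv 3 \pmod 4$. The paper does not do this combinatorics. It quotes \cite[Theorem 3.17]{GLLV}, which supplies, for $p\in\{2,3\}$, an $\Aut(S)$-invariant $\alpha\in\Irr(\AAA_n)$ with $p\nmid[\alpha_P,1_P]$ and $\alpha(x)=0$ for some $x\in P$. Without that input or a complete substitute, the proposal does not prove the proposition.

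Moreover, the concrete tools you propose provably fail for some $n$:

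\textbf{$p=3$, $n=7$.} There is no $3$-core of $7$; indeed $\AAA_7$ has no character of $3$-defect zero. Also $3^k\le 7$ forces $k=1$, so criterion (a) would require $\lambda$ to be a $3$-core. You must instead vanish on elements with several $3$-cycles, e.g.\ $\chi^{(6,1)}$ at an element of cycle type $(3,3,1)$.

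\textbf{$p=3$, $n=3^k$ with $k$ odd (e.g.\ $n=27$).} Again there is no $3$-core. Now $\mu=(n)$, so criterion (b) admits only $\lambda=(n)$. A different argument is needed for lying over $1_P$, e.g.\ counting $P$-orbits on $2$-subsets, which gives $[\chi^{(n-2,2)}_P,1_P]=k-1$.

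\textbf{$p=2$, $n=2^a$.} The multiplicities of $1_P$ in $\chi^{(n-1,1)}$, $\chi^{(n-2,2)}$, $\chi^{(n-2,1,1)}$ are $0$, $a-1$, $0$. These are all even when $a$ is odd, so your list fails for $n=8,32,128,\dots$. At $n=8$ a working choice is $\lambda=(5,2,1)$, whose restriction to $\AAA_8$ has degree $64=|\AAA_8|_2$. So your remark that $2$-defect zero characters of $\AAA_n$ come only from staircase partitions is also wrong: partitions of $2$-weight one give them whenever $n-2$ is triangular.
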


\begin{proof}
  For  $S=\AAA_n$, with $n \geq 7$, and $A = \mathrm{Aut}(S) \cong \SSS_n$, we have $|A/S| = 2$.
  Thus, for every $\alpha \in \irr S$ and $I = I_A(\alpha)$, $[I:S] \leq 2$  and hence
  by~\cite[Corollary 6.20]{Is} there exists a character $\beta \in \irr I$ such that $\beta_S = \alpha$.
  If $\alpha$ has $p$-defect zero and $p \neq 2$, then $\beta$ is a $p$-defect zero character of $I$ and hence $\beta$ is $p$-rational and it lies over $1_P$ by Lemma~\ref{Ldef0}. So, part (i) of Theorem~\ref{simple1} holds. 
  Therefore,  by \cite[Corollary 2]{GO}, we are reduced to the case  $p \in \{2,3\}$.
  For $p \in \{2,3\}$ and $P$ a Sylow $p$-subgroup of $S$, by~\cite[Theorem 3.17]{GLLV} there exist an $A$-invariant character $\alpha \in \mathrm{Irr}(S)$
  such that $p\nmid [\alpha_P, 1_P]$ and an element $x \in P$ such that $\alpha(x) = 0$.
  By the remark above, $\alpha$ is the restriction of a character of $A$, so $\alpha$ is rational and strongly real and hence part (i) of both Theorem~\ref{simple1} and Theorem~\ref{simple2} holds. 
\end{proof}

\begin{thm}\label{simple2a}
Theorem {\rm \ref{simple1}} holds if $p=2$.
\end{thm}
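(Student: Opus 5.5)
We prove Theorem~\ref{simple1} for $p=2$ by classification. Lemma~\ref{small} and Proposition~\ref{almost-simple-others} already cover the sporadic groups, the alternating groups and the listed small exceptions, so we may assume $S$ is a simple group of Lie type. Two cases arise, according to whether $S$ has characters of $2$-defect zero.

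\textbf{Odd characteristic, or characteristic $2$ with $2$-defect zero characters.} Here we aim for alternative (ii). Whenever $S$ has a real-valued $2$-defect zero character $\alpha$ which is invariant in $I_A(\alpha)$, with $|A/S|$ a $2$-power, the argument is short. By Lemma~\ref{Ldef0}, $\alpha$ is $2$-rational and $[\alpha_Q,1_Q]=\alpha(1)/|Q|$ is odd, where $Q\in\Syl_2(S)$. Lemma~\ref{Lgab}(iii), applied to $S\lhd I$ (a $2$-group quotient), then gives a $2$-rational extension $\beta$ with $2\nmid[\beta_P,1_P]$. In fact Lemma~\ref{Lgab}(iii) only needs $\alpha$ to be $2$-rational, and that holds automatically. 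So (ii) follows once we produce, for each almost simple $A$ with $A/S$ a $2$-group, a $2$-defect zero $\alpha\in\Irr(S)$; for $I=I_A(\alpha)$ the lemma then applies directly.

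Existence of $2$-defect zero characters of simple groups is known (Granville--Ono and Michler/Willems type results). For groups of Lie type in characteristic $2$, the Steinberg character $\St$ is $\Aut(S)$-invariant, rational, of $2$-defect zero, and extends to $\Aut(S)$; that settles characteristic $2$ at once. For odd characteristic, the classification of simple groups without $2$-blocks of defect zero (Michler, Willems) lists exactly which Lie type groups lack such characters. Outside that list we take a semisimple character $\chi_s$ with $s$ a regular semisimple $2'$-element in a suitable torus, so that $\chi_s(1)_2=|S|_2$.

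\textbf{The exceptional list.} The groups with no $2$-defect zero characters are essentially $\PSL_n(q)$ and $\PSU_n(q)$ for certain $n,q$, plus some small orthogonal and symplectic cases in odd characteristic. For these we establish alternative (i). We look for a unipotent character $\alpha$, or one lying in a Harish-Chandra series of a torus, such that:
\begin{enumerate}[\rm(a)]
\item $\alpha$ is $\Aut(S)$-invariant and rational (unipotent characters in classical types are, apart from known exceptions);
\item $\alpha$ extends to $\Aut(S)$ (Malle's extendibility results for unipotent characters);
\item $2\nmid[\alpha_Q,1_Q]$;
\item $\alpha$ vanishes on some $2$-element $x$.
\end{enumerate}
Invariance of $\alpha$ makes the vanishing in (d) automatic for all $\alpha^\varsigma$. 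Lemma~\ref{Lgab}(iii) then upgrades (c) to the extension $\beta$ to $I$. For $\PSL_n(q)$ with $q$ odd, a natural candidate is a unipotent character labelled by a $2$-core-free partition. Its vanishing on a regular unipotent-free $2$-element (a Coxeter-torus-type $2$-element) comes from the Murnaghan--Nakayama rule for unipotent characters. Condition (c) should follow via Lemma~\ref{triv}(ii) by exhibiting $1$ in the $\ell$-modular reduction, or via Lemma~\ref{ext}(iii).

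The main obstacle I expect is condition (c), the odd multiplicity of $1_Q$ in $\alpha_Q$. It is not visible from degrees alone. I would first try to compute $[\alpha_Q,1_Q]$ by restricting $\alpha$ to a Sylow $2$-subgroup sitting inside the normalizer of a maximally split torus together with the Weyl group. Failing that, I would use Lemma~\ref{triv}(ii) and Lemma~\ref{ext}(iii) with $\ell$ the defining characteristic, identifying $1$ inside the $\ell$-Brauer reduction. Small-rank leftover cases would be handled by direct computation in \cite{GAP}.
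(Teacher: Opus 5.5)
Your main line is the paper's. Once $S$ has some $\theta\in\Irr(S)$ of $2$-defect zero, Lemma~\ref{Ldef0} makes $\theta$ $2$-rational with $[\theta_Q,1_Q]$ odd. Lemma~\ref{Lgab}(iii), applied to $S\lhd I_A(\theta)$, then gives alternative (ii) of Theorem~\ref{simple1}, and in characteristic $2$ the Steinberg character supplies $\theta$.

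What goes wrong is your premise that certain Lie-type groups in odd characteristic (some $\PSL_n(q)$, $\PSU_n(q)$, small symplectic or orthogonal groups) have no characters of $2$-defect zero. That is false: every simple group of Lie type has one. This is the content of \cite{Wi}, whose strategy the paper follows. The paper also uses \cite[Corollary 2]{GO} in the proof of Theorem A to conclude that, apart from alternating groups, only sporadic groups lack such characters. Hence for $p=2$ alternative (ii) always holds in Lie type, and your whole ``exceptional list'' branch is superfluous.

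That branch would not have worked as written anyway. Conditions (c) and (d) are only hoped for. More concretely, Lemma~\ref{Lgab}(iii) cannot produce the extension required in alternative (i). There $I=I_{\Aut(S)}(\alpha)$ (all of $\Aut(S)$ if $\alpha$ is invariant) typically has index over $S$ that is not a power of $2$, e.g.\ odd-order diagonal or field automorphisms of $\PSL_n(q)$. This is why, when the paper does need alternative (i) for Lie type (e.g.\ Propositions~\ref{gl-unip} and \ref{gu-unip}), it builds extensions to the whole inertia group from permutation characters via Lemmas~\ref{ext} and \ref{index2}.

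The fix is either to cite \cite{Wi} correctly or to prove existence as the paper does. Write $S=[G,G]$ with $G$ adjoint and $L$ the dual group. The paper picks a semisimple $2'$-element $s\in L$ with $|\CB_L(s)|_2=|G/S|_2$. Then $\chi_s$ is $2$-rational, $(\chi_s)_S$ is irreducible by \cite[Proposition 4.3(iii)]{MT1}, and $\chi_s(1)=[L:\CB_L(s)]_{r'}$ has $2$-part $|S|_2$.

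This equality is the real content of your ``suitable torus''. Since $\ZB(L)\le\CB_L(s)$ and $|\ZB(L)|=|G/S|$, one always has $|\CB_L(s)|_2\ge|G/S|_2$, and equality has to be arranged case by case. For example, for $\PSL^\eps_n(q)$ with $n\ge4$, the paper chooses between elements of orders $\ppd(\eps q,n-1)$ and $\ppd(\eps q,n)$. Their centralizer orders $q^{n-1}-\eps^{n-1}$ and $(q^n-\eps^n)/(q-\eps)$ have gcd $\gcd(n,q-\eps)=|G/S|$, so one of them has $2$-part exactly $|G/S|_2$. You also need the irreducibility of the restriction to $S$, which you do not mention.
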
 
 
\begin{proof}
We will assume that $S$ is not isomorphic to any of the simple groups listed in part (a) of Lemma \ref{small}.  Note that it suffices to show that $S$ admits an
irreducible character $\theta$ of $2$-defect zero. Indeed, such a $\theta$ is $2$-rational and contains $1_Q$ with odd multiplicity for $Q \in \Syl_2(S)$ 
by Lemma \ref{Ldef0}. 
Applying Lemma \ref{Lgab}(iii) to $(G,N):=(I_A(\theta),S)$, we see
that $\theta$ has a $2$-rational extension $\beta$ to $I=I_A(\theta)$ with $2 \nmid [\beta_P,1_P]$ for $P \in \Syl_2(I)$, as desired. 

The proof follows the same strategy as in \cite{Wi}, relying on the Deligne--Lusztig theory \cite{C, DM, L}, but it is more convenient for us to align it with the subsequent treatment of odd primes $p$. If $S$ is a simple group of Lie type in characteristic $2$, then the Steinberg character $\St$ of $S$ 
has $2$-defect zero, in fact of degree $|S|_2$, and it is rational, so we are done in this case by Lemma \ref{Lgab}(ii).

Assume now that $S$ is a simple group of Lie type in characteristic $r > 2$. 
We will view $S$ as $[G,G]$, where $G=\GC^F$ for some simple algebraic group $\GC$ of adjoint type defined over a field of characteristic $r > 2$
and a Steinberg endomorphism $F:\GC \to \GC$. Let the pair $(\GC^*,F^*)$ be dual to $(\GC,F)$ and let $L:=(\GC^*)^{F^*}$. We will find some suitable
semisimple $2'$-element $s \in L$ and consider the corresponding semisimple character $\chi_s$ of $G$. Since $|s|$ is odd, $\chi_s$ is $2$-rational.
Moreover, $\theta:=(\chi_s)_S$ is irreducible by \cite[Proposition 4.3(iii)]{MT1}. As $\theta(1)=[L:\CB_L(s)]_{r'}$ and $|L|=|G|$, it suffices to find $s$ 
such that 
\begin{equation}\label{ind2}
  |\CB_L(s)|_2=|G/S|_2.
\end{equation}
In the sequel, $q = r^f$ for some $f \in \ZZ_{\geq 1}$, and we will sometimes use the existence of primitive prime divisors $\ell = \ppd(a,m)$, i.e. 
a prime divisor $\ell$ that divides $a^m-1$ but not $\prod^{m-1}_{i=1}(a^i-1)$ for $m \in \ZZ_{\geq 2}$ and an integer $a \neq 0,\pm 1$, see \cite{Zs}.

\smallskip
(i) Suppose $S = \PSL_2(q)$ with $q \geq 7$. Then $L \cong \SL_2(q)$ and $|G/S|=2$. If $q \equiv 1 \pmod4$, then we choose $s \in L$ of odd order
$(q+1)/2$, which implies $|\CB_L(s)|=q+1$ and so \eqref{ind2} holds.  If $q \equiv 3 \pmod4$, then we choose $s \in L$ of odd order
$(q-1)/2$, which implies $|\CB_L(s)|=q-1$ and so \eqref{ind2} holds again.

Next suppose that $S = \PSL^\eps_n(q)$ with $n \geq 3$ and $\eps = \pm 1$, where $\PSL^\eps$ stands for $\PSL$ when $\eps = 1$ and for
$\PSU$ when $\eps = -1$. Then $L \cong \SL^\eps_n(q)$ and $|G/S|=\gcd(n,q-\eps)=:d$. If $n=3$, then we choose $s \in L$ of odd order $q^2+\eps q+1$, which implies $|\CB_L(s)|=q^2+\eps q+1$ and so \eqref{ind2} holds. Suppose $n \geq 4$. Note that 
$$\gcd\bigl(q^{n-1}-\eps^{n-1},\frac{q^n-\eps^n}{q-\eps}\bigr) = d,$$ 
so the $2$-part of at least one of $N_1:=q^{n-1}-\eps^{n-1}$ and $N_2:=(q^n-\eps^n)/(q-\eps)$ is equal to $|G/S|_2$. Now, for each $i = 1,2$, we can
choose $s_i \in L$ of odd order $\ppd(\eps q,n-1)$ for $i=1$ and $\ppd(\eps q,n)$ for $i=2$. Then $|\CB_L(s_i)|=N_i$, and so we are done.

Consider the case where $S = \PSp_{2n}(q)$ with $n \geq 2$ (so $L = \mathrm{Spin}_{2n+1}(q)$), or $S = \Omega_{2n+1}(q)$ with
$n \geq 3$ (so $L = \Sp_{2n}(q)$). Then $|G/S|=\gcd(2,q-1)=2 = \gcd(q^n+1,q^n-1)$. So exactly one of $q^n+1$ and $q^n-1$, denoting it 
$N$, is congruent to $2$ modulo $4$. Now we choose $s \in L$ of odd order $N/2$, for which we have $|\CB_L(s)|=N$ and so \eqref{ind2} holds. 

Suppose that $S = \PO^+_{2n}(q)$ with $n \geq 4$, so $L = \mathrm{Spin}^+_{2n}(q)$ and $|G/S|=\gcd(4,q^n-1)=:d$.  
If $d=2$, then we choose $s \in L$ of odd order $(q^n-1)/2$, for which we have $|\CB_L(s)|=q^n-1$ and so \eqref{ind2} holds. 
If $q \equiv 1 \pmod4$, then we choose $s \in L$ of odd order $(q^{n-1}+1)/2$, for which we have 
$|\CB_L(s)|=(q^{n-1}+1)(q+1)$, and hence \eqref{ind2} holds. In the remaining case we have $d=4$, $q \equiv 3 \pmod{4}$, and 
$2|n$. Now we choose $s \in L$ of odd order $(q^{n-1}-1)/2$, for which we have 
$|\CB_L(s)|=(q^{n-1}-1)(q-1)$, and \eqref{ind2} holds again.

Next assume that $S = \PO^-_{2n}(q)$ with $n \geq 4$, for which we have $L = \mathrm{Spin}^-_{2n}(q)$ and $|G/S|=\gcd(4,q^n+1)=:d$.  
If $d=2$, then we choose $s \in L$ of odd order $(q^n+1)/2$, for which we have $|\CB_L(s)|=q^n+1$ and so \eqref{ind2} holds. 
In the remaining case we have $d=4$, $q \equiv 3 \pmod{4}$, and 
$2 \nmid n$. Now we choose $s \in L$ of odd order $(q^{n-1}+1)/2$, for which we have 
$|\CB_L(s)|=(q^{n-1}+1)(q-1)$, and \eqref{ind2} holds again.

\smallskip
(ii) Now we handle the exceptional groups of Lie type. First suppose that $S = \tw2 G_2(q)$, where $r=3$ and $2 \nmid f \geq 3$. Then using 
\cite[Lemma 2.3]{MT}, we choose $s \in L \cong S$ of odd order $\ppd(r,6f)$, for which $|\CB_L(s)|$ is odd. 

Next assume that $S$ has type $E_7(q)$. Then $|G/S|=2=\gcd(q^7-1,q^7+1)$. So exactly one of $q^7+1$ and $q^7-1$, denoting it 
$N$, is congruent to $2$ modulo $4$. Now using \cite[Table 7]{JPRW}, we can choose $s \in L$ of odd order $N/2$, for which we have 
$|\CB_L(s)|=N$ and so \eqref{ind2} holds. 

For the remaining groups
$S$ of type $G_2(q)$, respectively $\tw3 D_4(q)$,  $F_4(q)$, $E_6(q)$, $\tw2 E_6(q)$, and $E_8(q)$, we can again use \cite[Lemma 2.3]{MT}
to choose $s \in L$ of odd order $\ppd(r,mf)$, where $m= 6$, $12$, $12$, $9$, $18$, and $30$, respectively. In these cases, $|\CB_L(s)| = \Phi_m(q)$ is odd, where $\Phi_m(t) \in \ZZ[t]$ is the $m^{\mathrm{th}}$ 
cyclotomic polynomial in variable $t$. 

In fact, for $S$ of type $E_6(q)$ or $\tw2 E_6(q)$, by \cite[Lemma 2.3]{MT} we can also choose a semisimple element $s_2 \in F_4(q) < L$ of odd order $\ppd(r,12f)$, for which we have $|\CB_L(s_2)|= \Phi_{12}(q)\Phi_3(q)$, 
respectively $\Phi_{12}(q)\Phi_6(q)$. 
\end{proof}

Let $q\geq 2$ be any prime power. We will work with the unipotent characters $\chi^\lambda$ 
of $G=\GL_n(q)$ (which contain $\ZB(\GL_n(q))$ in their kernels and hence will be viewed as 
characters of $\PGL_n(q)$ which 
restrict irreducibly to $\PSL_n(q)$) labeled by the partitions $\lambda=(n-1,1)$ when $n \geq 2$, 
and $\lambda = (n-2,2)$, $(n-2,1^2)$ when $n \geq 4$. Denote
$$\pb=\chi^{(n-1,1)},~~\pc=\chi^{(n-2,2)},~~\pd=\chi^{(n-2,1^2)}.$$
In what follows we will freely view these characters both over $G$ and $G/Z$, and similarly for subgroups
containing $Z$.
Given a chosen prime $\ell$ and a finite group $X$, we will denote by $\chi^\circ$ the restriction to $\ell'$-elements of an ordinary character $\chi$
of $X$. With the above notation, we can prove

\begin{prop}\label{gl-unip}
The following statements hold for $S:=\PSL_n(q)$ with $n \geq 3$, $A:=\Aut(S)$, and $P \in \Syl_2(A)$.
\begin{enumerate}[\rm(i)]
\item Suppose $n \geq 3$. Then $\pb_S$ extends to a strongly real character $\tilde\pb$ of $A$. 
If in addition $2 \nmid nq$, then $\tilde\pb$ can be chosen to contain $1_P$ and vanishing on some $2$-element $x \in S$.
\item Suppose $n \geq 4$. Then $\pc_S$ extends to a character $\tilde\pc$ of $A$ which is afforded by a $\QQ$-representation.
If in addition $4|n$ and $2 \nmid q$, then $\tilde\pc$ can be chosen to contain $1_P$ and vanishing on some $2$-element $x \in S$.
\item Suppose $n \geq 4$. Then $\pd_S$ extends to a character $\tilde\pd$ of $A$ which is afforded by a $\QQ$-representation.
If in addition $n \equiv 2 \pmod4$ and $2 \nmid q$, then $\tilde\pd$ can be chosen to contain $1_P$ 
and vanishing on some $2$-element $x \in S$.
\end{enumerate}
\end{prop}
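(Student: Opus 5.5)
We realize the three characters as constituents of rational permutation characters, extend them to $A$ with Lemmas \ref{ext} and \ref{index2}, and then use a Brauer-character argument at $\ell=r$ (the characteristic) to get $1_P$ and vanishing.

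\emph{Step 1: permutation characters.} Set $V=\FF_q^n$ and write $\Gamma\mathrm{L}_n(q)$ for the full semilinear group, with $P\Gamma\mathrm{L}_n(q)$ its image in $A$.
\begin{itemize}
\item $\pb$: the permutation character of $\PGL_n(q)$ on points of $V$ is $1+\pb$.
\item $\pc$: the permutation character on $2$-subspaces is $1+\pb+\pc$.
\item $\pd$: the permutation character on flags (point $\subset$ $2$-space) is $\chi^{(n)}+2\chi^{(n-1,1)}+\chi^{(n-2,2)}+\chi^{(n-2,1^2)}$, by the Kostka numbers of the Hecke algebra decomposition.
\end{itemize}
All of these permutation actions extend to $P\Gamma\mathrm{L}_n(q)$. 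Since $n\ge 3$, they also extend to $A=P\Gamma\mathrm{L}_n(q)\rtimes\langle\tau\rangle$, with $\tau$ the inverse-transpose graph automorphism: let $A$ act on the disjoint union of points and hyperplanes (respectively $2$-spaces and $(n-2)$-spaces, or the corresponding dual flags). The point and hyperplane permutation characters of $S$ agree, so each of $\pb,\pc,\pd$ occurs in the $A$-permutation character $\rho=(1_H)^A$ with multiplicity $[\al,\rho_S]=2$ rather than $1$. To get multiplicity one, we instead work in two steps.

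First, over $B:=P\Gamma\mathrm{L}_n(q)/Z$ the multiplicity is $1$. Lemma \ref{ext}(ii) then gives a strongly real extension of $\pb_S$ to $B$, and Lemma \ref{ext}(i) gives rational extensions of $\pc_S$ and $\pd_S$ to $B$. These characters are unipotent and $A$-invariant, with rational values and Schur index $1$ on $S$, because they occur with multiplicity one in a rational permutation character.

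Second, we pass from $B$ to $A$ with $[A:B]\le 2$ using Lemma \ref{index2}, taking $\chi$ to be the $A$-permutation character $\rho$ on $(k\text{-spaces})\sqcup((n-k)\text{-spaces})$. This $\rho$ contains $\al$ with multiplicity $2$, which is even, so Lemma \ref{index2} does not apply directly. Instead we use $\rho'$, the permutation character on the $A$-orbit of incident pairs (point, hyperplane) or non-incident pairs, or on the set of (point, hyperplane) pairs of each incidence type. Here $\pb$ occurs in the incident pairs with an odd multiplicity computable via the Hecke algebra, and that multiplicity yields $\QQ$-rationality. If this fails, the fallback is the multiplicity-free Gelfand pair for $\PGL_n(q)\rtimes\langle\tau\rangle$ on non-incident pairs, whose permutation character is $1+\pb+\dots$ with $\pb$ appearing once. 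This choice of an odd-multiplicity permutation module is the step I expect to be the main obstacle; once it is in place, Lemmas \ref{index2}(ii) and \ref{ext}(ii) give $\QQ$-representations, respectively strong reality.

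\emph{Step 2: containing $1_P$.} Here $r$ is odd, so $P$ is an $r'$-group. By Lemma \ref{triv}(ii) it suffices to show that $\tilde\chi^\circ$ contains the trivial $r$-Brauer character, or a linear character of order $2$ that we then remove by twisting, as in Lemma \ref{ext}(iii).
\begin{itemize}
\item For $\pb$: reduction mod $r$ of the permutation module on points has a trivial composition factor with quotient.
\item More precisely, $\pb^\circ$ contains the trivial Brauer character exactly when $r\mid n$, and this fails when $2\nmid nq$.
\end{itemize}
So for $\pb$ I would instead use the $2$-modular reduction, i.e.\ take $\ell=2$ with $P$ replaced by a Sylow $r$-subgroup. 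That is not what we need, so instead we argue directly. The Sylow $2$-subgroup of $\PGL_n(q)$ for odd $n$ fixes a point of $\mathbb{P}(V)$, because $n$ odd forces an odd-length orbit. Hence $1_{P}$ occurs in the point permutation character $1+\pb$ with multiplicity equal to the number of $P$-orbits on points, which is at least $2$. Therefore $\pb_P\ni 1_P$, and Lemma \ref{ext}(iii) with $\beta=1_N$ handles the field and graph automorphisms.

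For $\pc$ with $4\mid n$ and for $\pd$ with $n\equiv 2\pmod 4$, the same orbit counting applies. $P$ stabilizes a decomposition $V=V_1\perp\cdots$ into $2$-dimensional pieces and fixes several $2$-spaces, respectively flags. Comparing the number of $P$-orbits with the multiplicities of the smaller constituents, which are already known, gives $[\pc_P,1_P]>0$, respectively $[\pd_P,1_P]>0$.

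\emph{Step 3: vanishing.} We evaluate unipotent character values on $2$-elements of $S$ via the Murnaghan--Nakayama-type formula, or equivalently via the fixed-point counts of the permutation characters.
\begin{itemize}
\item For $\pb$: take $x$ regular semisimple of $2$-power order with eigenvalues in $\FF_{q^2}$ on a $2$-dimensional block, so that $x$ fixes exactly one point. Then $\pb(x)=1-1=0$. Such an $x$ exists in $S$ by adjusting the determinant on a $1$-dimensional complement, since $n$ is odd.
\item For $\pc$ and $\pd$: choose $x$ acting as a Singer-type $2$-element on blocks of dimension $2$ or $4$, so that the fixed counts of points, $2$-spaces and flags combine to $0$.
\end{itemize}
In each case the vanishing transfers to $\tilde\chi$ because $x\in S$.
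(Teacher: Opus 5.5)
\textbf{Step 1 has a genuine gap at exactly the point you flag, and neither of your proposed remedies works.} Consider the incident and the non-incident point--hyperplane actions. In each, the stabilizer of a pair, even after adjoining $\sigma$, has exactly three orbits on $\Omega_1$. So $\pb$, and its extension $\hat\pb$ to $B$, occurs with multiplicity exactly $2$, just as in the points $\sqcup$ hyperplanes action. Hence neither Lemma~\ref{index2} nor Lemma~\ref{ext}(ii) produces a strongly real extension of $\pb_S$ to $A$; the paper does not derive this itself but quotes \cite[Lemma~6.2]{T}.

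There is a second problem. Lemma~\ref{index2} needs the \emph{chosen} extension $\hat\alpha\in\Irr(B)$ to be $A$-invariant, and this does not follow from $A$-invariance of $\alpha$ on $S$: a priori $\hat\alpha^\tau=\hat\alpha\mu$ for some, possibly nontrivial, linear character $\mu$ of $B/S$.

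For $\pc$ the paper resolves both issues with the $\tau$-stable subgroup $\langle H/Z,\sigma,\tau\rangle$, where $H=\mathrm{Stab}_G(U_1,W)\cong\GL_2(q)\times\GL_{n-2}(q)$ stabilizes a $2$-space and a complement. Orbit counts on $\Omega_1$ and $\Omega_2$ give $[\hat\pc,(1_{\hat H})^\Gamma]=3$, which is odd, while $[\pc_L,((1_H)^G)_L]\le 4$ for $L=\SL_n(q)$. Since $\pc_S$ is $\Aut(S)$-invariant, a non-$A$-invariant $\hat\pc$ would force the latter multiplicity to be at least $6$; so $\hat\pc$ is $A$-invariant and Lemma~\ref{index2} applies.

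Your non-incident-pairs idea is the right one, but for $\pd$ rather than $\pb$. Indeed $\pd$ occurs once in $(1_K)^G$ with $K=\GL_1(q)\times\GL_{n-1}(q)$, and $K/Z$ is normalized by $\sigma$ and $\tau$, so Lemma~\ref{ext}(i) gives a rational extension to $A$ directly.

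\textbf{Step 2 for $\pc$ and $\pd$ rests on false claims.} For odd $q$, a Sylow $2$-subgroup of $\GL_4(q)$ fixes no $2$-subspace: an index-$2$ subgroup acts irreducibly on each of two $2$-dimensional summands, via different factors, and the remaining elements swap them. For even $n$, a Sylow $2$-subgroup of $\GL_n(q)$ fixes no point of $\mathbb{P}(V)$, hence no flag. The promised comparison of orbit counts is also never carried out.

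The paper instead works modulo an odd prime $\ell$.
\begin{enumerate}[\rm(1)]
\item For $\pc$, take $\ell\mid(q^{n-1}-1)/(q-1)$. Then $1_G$ occurs once in $\pc^\circ$ by \cite[Proposition~3.1]{GT}, and Lemma~\ref{ext}(iii) finishes.
\item For $\pd$, take $\ell\mid(q^n-1)/(q-1)$, so that $\pb^\circ=1_G+\varphi$ and $\pd^\circ=\varphi+\psi$. One twists $\tilde\pd$ so that the extensions of $\varphi$ in $\tilde\pb^\circ$ and $\tilde\pd^\circ$ coincide. When $4\mid n-2$, the group $Q\in\Syl_2(B)$ lies in the odd-index subgroup $\hat H/Z$, which has three orbits on $\Omega_1$. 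Hence $[\hat\pb_Q,1_Q]\ge 2$, so $\varphi_Q$ contains $1_Q$, and Lemma~\ref{ext}(iii) finishes.
\end{enumerate}

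By contrast, your odd-number-of-points argument for $\pb$ in (i) is correct and simpler than the paper's Brauer-character argument.

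Finally, the elements in Step 3 need care. For $n=4$, no $2$-element acting as a single Singer-type $4$-block lies in $S$: its determinant generates the Sylow $2$-subgroup of $\FF_q^\times$, so it is not a fourth power. This is why the paper uses the spectrum $\{\xi,\xi^q,1,\xi^{-(q+1)}\}$ there.
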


\begin{proof}
Let $q=r^f$ for a prime $r$ and let 
$V = \langle e_1, e_2, \ldots,e_n \rangle_{\FF_q}$ be the natural module for $G:=\GL_n(q)$. In the basis $(e_1, \ldots,e_n)$, consider
the transformation $\sigma:\sum^n_{i=1} x_ie_i \mapsto \sum^n_{i=1}x_i^re_i$ and set $\Gamma:=\langle G,\sigma\rangle$,
$Z:=\ZB(G)$, $B:=\Gamma/Z$. We also consider the transpose-inverse automorphism $\tau:g \mapsto \tw tg^{-1}$ for any 
$g \in G$ written in this basis.

\smallskip
(i) The first statement is proved in \cite[Lemma 6.2]{T}. More precisely, $G$ and $\Gamma$ act doubly transitively on the set $\Omega_1$ 
of $1$-dimensional $\FF_q$-subspaces of $V$, and  
$\tilde\pb_{B}+1_B$ is chosen to be the permutation character of this action.
Taking $\ell$ to be a prime divisor of $(q^n-1)/(q-1)$, we see by 
\cite[Proposition 3.1]{GT} that $(\tilde\pb^\circ)_{G/Z}$ is the sum of $1_{G/Z}$ and an irreducible $\ell$-Brauer character.   
Since $\ell$ divides $|\Omega_1|$, $(\tilde\pb^\circ)_B$ also contains $1_B$. It follows that $(\tilde\pb^\circ)_B$ contains $1_B$ with multiplicity one.

Now assume that $2 \nmid nq$. Then $\pb(1)=(q^n-q)/(q-1) \geq 12$ is even, and $\ell > 2$. Applying Lemma \ref{ext}(iii) to the normal 
subgroup $B$ of index $2$ in $A$, and multiplying $\tilde\pb$ by a linear character of $A/B$ of order $2$ if necessary, we achieve that
$(\tilde\pb)_P$ contains $1_P$. Next we choose $\varep=\varep_{n-1} \in \overline{\FF_q}$ of order $|\varep|=(q^{n-1}-1)_2 \geq (q^2-1)_2 > (q-1)_2$.
Then there is a $2$-element $x \in S$ whose preimage in $\SL_n(q)$ has spectrum
$$\bigl\{\varep,\varep^q, \ldots,\varep^{q^{n-2}},\varep^{-(q^{n-1}-1)/(q-1)}\bigr\}$$
on $V$. As $x$ fixes a unique $1$-space of $V$, $\tilde\pb(x)=0$ as desired. 

\smallskip
(ii) Consider the subgroup $H:=\mathrm{Stab}_G(U_1,W) \cong \GL_2(q) \times \GL_{n-2}(q)$, where
$U_1:=\langle e_1,e_2 \rangle_{\FF_q}$ and $W:= \langle e_3, e_4, \ldots,e_n \rangle_{\FF_q}$. Then $H$ has $3$ orbits on the set
$\Omega_1$ of $1$-dimensional $\FF_q$-subspaces of $V$, with representatives $\langle e_1\rangle_{\FF_q}$, $\langle e_3\rangle_{\FF_q}$,
and $\langle e_1+e_3\rangle_{\FF_q}$. The same holds for $\hat{H}:= \langle H, \sigma \rangle$ and for $H \cap L$, where $L:=\SL_n(q)$.

Next we count the $H$-orbits on the set $\Omega_2$ of $2$-dimensional $\FF_q$-subspaces of $V$.
For a representative $U$ of such an orbit, we can choose $U=U_1$ if $U \subseteq U_1$, and $U=U_2:=\langle e_3,e_4\rangle_{\FF_q}$ if 
$U \subseteq W$. If $0 \neq U \cap U_1 \neq U$, we can choose $U=\langle e_1,ae_2+v\rangle_{\FF_q}$ with $0 \neq v \in W$ and $a \in \FF_q$;
so with $a=0$ we can choose $U=U_3:=\langle e_1,e_3\rangle_{\FF_q}$ when $0 \neq U \cap W$ and with $a \neq 0$ 
we can choose $U=U_4:=\langle e_1,e_2+e_3\rangle_{\FF_q}$ when $U \cap W=0$. Assume now that $U \cap U_1=0$ and $U \not\subseteq W$. Then we can choose $U=U_5:=\langle e_1+e_3,e_4\rangle_{\FF_q}$ when $0 \neq U \cap W$, and 
$U=U_6:=\langle e_1+e_3,e_2+e_4\rangle_{\FF_q}$ when $U \cap W=0$. Thus $H$ has $6$ orbits on $\Omega_2$,
and the same also holds 
true for $\hat{H}$, since $\sigma$ preserves $U_1$ and $W$. We also note that 
$$[H:\mathrm{Stab}_H(U_i)]=[(H \cap L):\mathrm{Stab}_{H \cap L}(U_i)]$$
for all $1 \leq i \leq 5$, and also for $i=6$ except when $n=4$ and $2 \nmid q$, where
$$[H:\mathrm{Stab}_H(U_6)]=2[(H \cap \SL_4(q)):\mathrm{Stab}_{H \cap \SL_4(q)}(U_6)].$$
It follows that the number of $(H \cap L)$-orbits on $\Omega_2$ is $6$, unless $n=4$ and $2 \nmid q$ when it is $7$.   

By \cite[Lemma 5.1]{GT}, $1_G+\pb$ is the permutation character of $G$ on $\Omega_1$, and $1_G+\pb+\pc$ is the permutation character of 
$G$ on $\Omega_2$. The permutation actions on $\Omega_1$ and $\Omega_2$ also extend to $\Gamma$, 
with point stabilizers $\hat{P}_1$ and $\hat{P}_2$. As $\hat{P}_2$ has two orbits on $\Omega_1$, the inner product of the corresponding 
permutation characters over $\Gamma$ is $2$. 
Since $1 < \pb(1) < \pc(1)$, it follows that these permutation characters are $1_\Gamma + \hat\pb$ and $1_\Gamma+\hat\pb+\hat\pc$,
where $\hat\pb$ is a rational extension of $\pb$ (which is equal to $\tilde\pb_B$, where $\tilde\pb$ is as defined in (i)), 
and $\hat\pc$ is a rational extension of 
$\pc$ to $B=\Gamma/Z$. 
The above computations show that 
$$3=[(1_\Gamma+\hat\pb+\hat\pc)_{\hat H},1_{\hat H}]-[(1_\Gamma+\hat\pb)_{\hat H},1_{\hat H}] = [(\hat\pc)_{\hat H},1_{\hat H}]=[\hat\pc,(1_{\hat H})^\Gamma]$$
and 
 $$4 \geq [(1_L+\pb_L+\pc_L)_{H\cap L},1_{H \cap L}]-[(1_L+\pb_L)_{H \cap L},1_{H \cap L}] = [(\pc)_{H \cap L},1_{H \cap L}]=[\pc_L,((1_H)^G)_L].$$
Now consider the $A$-character $\rho:= (1_{\langle H/Z,\sigma,\tau\rangle})^A$ which restricts to $(1_{\hat H})^\Gamma$ and hence contains 
$3\hat\pc$ on $G$. If $\hat\pc$ is not $A$-invariant, then $\rho_B$ also contains $3\hat\pc'$ for some $A$-conjugate $\hat\pc' \neq \hat\pc$. On the other hand,
$\pc_S$ is $\Aut(S)$-invariant by \cite[Theorem 2.5]{Ma}, so $(\hat\pc')_S = \hat\pc_S$, and thus $[\pc_L,((1_H)^G)_L] \geq 6$, a contradiction.
So $\hat\pc$ is $A$-invariant.
Applying Lemma \ref{index2}, we see that $\hat\pc$ has an extension $\tilde\pc$ to $A$ which is afforded by a $\QQ A$-module.

Now assume that $4|n$ and $2 \nmid q$. Then $\pc(1)=q^2(q^n-1)(q^{n-3}-1)/(q-1)(q^2-1)$ is even. Furthermore, choosing $\ell$ to be 
an (odd) prime divisor of $(q^{n-1}-1)/(q-1)$, we see by \cite[Proposition 3.1]{GT} that $\pc^\circ$ contains $1_G$ with multiplicity one. Applying Lemma \ref{ext}(iii) to the normal 
subgroup $G/Z$ of $A$, and multiplying $\tilde\pc$ by a linear character of $A/(G/Z)$ of order $2$ if necessary, we achieve that
$(\tilde\pc)_P$ contains $1_P$.
Next, if $n=4$ we choose $\xi \in \overline{\FF_q}$ of order $|\xi|=(q^2-1)_2 > (q-1)_2$, so that $\xi^{q+1}$ has order $(q-1)_2 > 1$, and 
find a $2$-element $x \in S$ whose preimage in $\SL_n(q)$ has spectrum
$$\bigl\{\xi,\xi^q,1,\xi^{-(q+1)}\bigr\}$$
on $V$. Then, $x$ has exactly two fixed points on $\Omega_1$ and exactly two fixed points on $\Omega_2$, whence
$\pb(x)=1$ and $\pc(x)=0$. Assume $4|n \geq 8$. 
Then we choose $\varep=\varep_{n-4} \in \overline{\FF_q}$ of order $|\varep|=(q^{n-4}-1)_2 \geq (q^4-1)_2 > (q^2-1)_2$, 
and take $\zeta=\varep^{-(q^{n-4}-1)/(q^4-1)} \in \overline{\FF_q}$ of order $|\zeta|=(q^4-1)_2 > (q^2-1)_2$.
Now there is a $2$-element $x \in S$ whose preimage in $\SL_n(q)$ has spectrum
$$\bigl\{\varep,\varep^q, \ldots,\varep^{q^{n-5}},\zeta,\zeta^q,\zeta^{q^2},\zeta^{q^3}\bigr\}$$
on $V$. Then, $x$ has no fixed point on $\Omega_1$ and no fixed point on $\Omega_2$, whence
$\pb(x)=-1$ and $\pc(x)=0$. 

\smallskip
(iii) Consider the subgroup 
$K:=\mathrm{Stab}_G(\langle e_1\rangle_{\FF_q},\langle e_2, \ldots,e_n\rangle_{\FF_q}) \cong \GL_1(q) \times \GL_{n-1}(q)$. As shown in the proof
of \cite[Proposition 5.5]{NT1}, $\pd$ has multiplicity one in $(1_K)^G$;
in fact it is the only irreducible constituent of degree $\pd(1)$. Since the restriction to $G$ of $(1_{\langle K/Z,\sigma,\tau\rangle})^A$ is
equal to $(1_K)^G$, it follows that $\pd$ is $A$-invariant. Hence $\pd$ has a rational-valued extension $\tilde\pd$ to $A$, which is an irreducible
constituent of $(1_{\langle K/Z,\sigma,\tau\rangle})^A$ of multiplicity one, by Lemma \ref{ext}(i). Now the
Schur index $m_\QQ(\tilde\pd)$ must be one by \cite[Corollary (10.2)(c)]{Is}, so $\tilde\pd$ is afforded by a $\QQ A$-module.

Next assume that $4|(n-2)$ and $2 \nmid q$. Then $\pd(1)=(q^n-q)(q^n-q^2)/(q-1)(q^2-1)$ is even. Furthermore, choosing $\ell$ to be 
an odd prime divisor of $(q^n-1)/(q-1)$, we see by \cite[Proposition 3.1]{GT} that there are irreducible $\ell$-Brauer characters 
$\varphi$ and $\psi$ of $G$ trivial at $Z$ such that $\pb^\circ=1_G + \varphi$ and $\pd^\circ=\varphi+\psi$. Since 
$1 < \varphi(1) < \psi(1)$ and $\tilde\pb$ (as specified in part (i)) and $\tilde\pd$ are real-valued, 
it follows from Lemma \ref{ext}(iii) that there are real-valued extensions $\varphi_1$ and 
$\varphi_2$ of $\varphi$ to $A$ and $\psi_1$ of $\psi$ to $A$, and a linear character $\lambda_1$ of $A$ such that
$$\tilde\pb^\circ = \lambda_1+\varphi_1,~(\lambda_1)_B = 1_B,~\tilde\pd^\circ=\varphi_2 +\psi_1.$$
As $A/(G/Z)$ is abelian, $\varphi_2=\varphi_1\nu$ for some real-valued, hence of order $\leq 2$, linear character $\nu$ of $A/(G/Z)$. 
Changing $\tilde\pd$ to $\tilde\pd\nu$, we may assume that $\varphi_2=\varphi_1$. In particular, $(\varphi_2)_Q = \tilde\pb_Q-1_Q$
for $Q := P \cap B \in \Syl_2(B)$. 

The assumption $4|(n-2)$ implies that the index $(q^n-1)(q^{n-1}-1)/(q-1)(q^2-1)$ of 
$H = \GL_2(q) \times \GL_{n-2}(q)$ in $G$ is odd, and similarly for the index of $\hat{H}/Z = \langle H,\sigma \rangle/Z$ in 
$B$. Hence we may assume that $Q < \hat{H}/Z$. But $\hat{H}$ has $3$ orbits on $\Omega_1$, so 
$[\hat\pb_Q,1_Q] \geq 2$. Recalling $\hat\pb=\tilde\pb_B$, we deduce that  
$[\tilde\pb_Q,1_Q] \geq 2$, and hence $(\varphi_2)_Q$ contains $1_Q$.
Applying Lemma \ref{ext}(iii) to the normal 
subgroup $B$ of index $2$ of $A$ (and the $A$-invariant character $(\varphi_2)_B$), 
and multiplying $\tilde\pd$ by a linear character of $A/B$ of order $2$ if necessary, we achieve that
$(\tilde\pd)_P$ contains $1_P$. We also choose $\varep=\varep_{n-2} \in \overline{\FF_q}$ of order 
$|\varep|=(q^{n-2}-1)_2 \geq (q^4-1)_2 > (q^2-1)_2$, 
and take $\xi=\varep^{-(q^{n-2}-1)/(q^2-1)} \in \overline{\FF_q}$ of order $|\xi|=(q^2-1)_2 > (q-1)_2$.
Now there is a $2$-element $x \in S$ whose preimage in $\SL_n(q)$ has spectrum
$$\bigl\{\varep,\varep^q, \ldots,\varep^{q^{n-3}},\xi,\xi^q\bigr\}$$
on $V$. Then, $x$ has no fixed point on $\Omega_1$ and a unique fixed point on $\Omega_2$, whence
$\pb(x)=-1$ and $\pc(x)=1$. This element $x$ also has no fixed point while acting on the set of partial flags
$0 < V_1 < V_2 < V$ of $V$, with $\dim_{\FF_q}(V_i)=i$. By \cite[Lemma 5.1(ii)]{GT}, $G$ acts on the latter set with
permutation character $1_G +2\pb+\pc+\pd$, which implies that $\pd(x)=0$.  
\end{proof}

\begin{prop}\label{gu-unip}
Let $q$ be any odd prime power, and let $S:=\PSU_n(q)$ and $A:=\Aut(S)$.
If $n \geq 4$, then $A$ admits a character $\chi$ afforded by a $\QQ$-representation, such that $\chi_S \in \Irr(S)$, 
$\chi_P$ contain $1_P$ for $P \in \Syl_2(A)$, and $\chi(h)=0$ for some $2$-element $h \in S$. If $n=3$, then $S$ admits an irreducible character $\alpha$ and a $2$-element $h \in S$ such that 
$\alpha^\varsigma(h)=0$ for all $\varsigma \in \Aut(S)$ and $\alpha$ extends to 
a strongly real character of $I=I_{\Aut(S)}(\alpha)$ which lies above $1_Q$ for $Q \in \Syl_2(I)$.
\end{prop}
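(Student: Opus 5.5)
The plan is to mirror the proof of Proposition \ref{gl-unip}, replacing $\GL_n(q)$ by $G:=\GU_n(q)$ acting on its natural Hermitian module $V=\FF_{q^2}^n$. The natural permutation actions are now on the sets $\Omega_1$ of isotropic $1$-spaces and $\Omega_2$ of nondegenerate (or totally isotropic) $1$-spaces of $V$. Their permutation characters decompose into unipotent characters of rank-$3$ type: for instance, $1_G+\chi^{(n-1,1)}+\chi^{(n-2,2)}$, up to a sign-twisted labelling, is the permutation character on isotropic points. The action on nondegenerate points has stabilizer $K\cong\GU_1(q)\times\GU_{n-1}(q)$, and $(1_K)^G$ is multiplicity free (Gelfand pair). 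Its constituent labeled by the analogue of $\pd$, i.e.\ $\chi^{(n-2,1^2)}$, has multiplicity one and is the unique constituent of its degree.

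\textbf{Step 1: extension to $A$.} I will choose the candidate $\chi$ among the unipotent characters $\chi^{(n-1,1)}$, $\chi^{(n-2,2)}$, $\chi^{(n-2,1^2)}$ of $\GU_n(q)$, according to $n$ modulo $4$. These contain $\ZB(G)$ in their kernels and restrict irreducibly to $S$ by \cite[Proposition 4.3]{MT1}. Each is $\Aut(S)$-invariant by \cite[Theorem 2.5]{Ma}.

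All such permutation actions extend to $\langle G,\sigma\rangle/Z$, where $\sigma$ is the field automorphism, which here also induces the graph automorphism. Hence the chosen character occurs with multiplicity one in a permutation character of $A$. Then Lemma \ref{ext}(i) gives a rational extension, and \cite[Corollary (10.2)(c)]{Is} forces Schur index one, exactly as in part (iii) of Proposition \ref{gl-unip}. So the extension is afforded by a $\QQ A$-module.

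\textbf{Step 2: containing $1_P$.} I will argue as before with Brauer characters. Let $\ell$ be an odd primitive prime divisor of $q^{2n}-1$ or $q^{2(n-1)}-1$, or more generally of an appropriate $|\GU_n(q)|$-factor. Using the unitary analogue of \cite[Proposition 3.1]{GT}, $\chi^\circ$ contains $1_G$, or a Brauer character already shown to contain $1_Q$, with multiplicity one. Lemma \ref{ext}(iii), applied successively along the $2$-part of $A/S$, then lets me twist by a linear character of order $\le 2$ so that $\chi_P\ni 1_P$. If the $2$-part is not cyclic of order $2$, I will instead work directly with permutation characters: find a subgroup of odd index in $A$, namely a stabilizer of a suitable decomposition $V=U\perp W$, and count its orbits to show $[\chi_Q,1_Q]\ge 1$, as with $\hat H$ in part (iii) of Proposition \ref{gl-unip}.

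\textbf{Step 3: vanishing.} I will choose a $2$-element $h\in S$ whose preimage in $\SU_n(q)$ has eigenvalues forming a few Frobenius orbits of large $2$-power order $(q^{2k}-1)_2$. Then $h$ has a prescribed small number of fixed points on $\Omega_1$, on $\Omega_2$, and on the relevant flag variety, and combining the permutation characters yields $\chi(h)=0$.

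\textbf{Case $n=3$.} Here I will take $\alpha$ to be the unipotent character of degree $q(q-1)$. It is rational, $\Aut(S)$-invariant and even of degree, and it vanishes on suitable $2$-elements, e.g.\ elements of order $(q+1)_2$-type regular in a torus of order $q^2-1$. Then Lemma \ref{Lgab}(iii) supplies a strongly real extension above $1_Q$, provided $[\alpha_{Q_0},1_{Q_0}]$ is odd for a Sylow $2$-subgroup $Q_0$ of $S$. This holds because $1+\alpha$ is a constituent of the permutation character on isotropic points, and $Q_0$ has an odd number of orbits there.

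The main obstacle is Step 2 together with the parity bookkeeping: deciding, for each residue of $n$ mod $4$ and of $q$ mod $4$, which unipotent character has even degree yet contains $1_P$. This requires the orbit counts of the odd-index subgroups on isotropic and nondegenerate points.
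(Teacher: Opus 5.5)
Your Step 1 (a rational extension with Schur index one, obtained from a multiplicity-one constituent of a permutation character of $A$) agrees with the paper. There are, however, two genuine gaps.

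\textbf{The case $n=3$ fails.} The unipotent character $\alpha$ of degree $q(q-1)$ is the cuspidal one. It is the $Z$-trivial component, $Z:=\ZB(\GU_3(q))$, of the Weil character $\omega(g)=-(-q)^{\dim\ker(g-1)}$. It is not a constituent of the isotropic-point character, which is $1+\St$ because $\PSU_3(q)$ is $2$-transitive on the $q^3+1$ isotropic points. Moreover, $[\alpha_{Q_0},1_{Q_0}]$ is never odd:
\begin{itemize}
\item Take $Q_0\in\Syl_2(\SU_3(q))$, identified with a Sylow $2$-subgroup of $S$, and let $Z_2$ be the Sylow $2$-subgroup of $Z$. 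Then $R:=Q_0Z_2$ is a Sylow $2$-subgroup of $\GU_3(q)$. You may take $R=R_2\times R_1\le\GU_2(q)\times\GU_1(q)$.
\item There $\omega=\omega^{(2)}\otimes\omega^{(1)}$, where $\omega^{(1)}$ is the sum of the nontrivial linear characters of $\GU_1(q)$.
\item Hence $[\alpha_{Q_0},1_{Q_0}]=\dim\omega^{RZ}=\sum_\nu[(\omega^{(2)}_{\bar\nu})_{R_2},1_{R_2}]$. The sum runs over the nontrivial characters $\nu$ of odd order of $\GU_1(q)$, and $\omega^{(2)}_{\bar\nu}$ is the component of the Weil character of $\GU_2(q)$ with central character $\bar\nu$.
\item The summands are constant on Galois orbits, which have even length $\varphi(o(\nu))$. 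So the multiplicity is even.
\item For $q=3$ the sum is empty, so no extension of $\alpha$ lies over $1_P$ at all.
\end{itemize}
Thus Lemma \ref{Lgab}(iii) is unavailable. The paper instead takes the non-unipotent real character $\chi^{(0,1,q)}_{(q-1)(q^2-q+1)}$ of $\GU_3(q)$. It has multiplicity one in the permutation character on non-isotropic points, whose stabilizer $\GU_2(q)\times\GU_1(q)$ has odd index $q^2-q+1$. The paper then applies Lemma \ref{ext}(ii),(iii) on its inertia group in $A$.

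\textbf{For $n\ge 4$, Step 3 does not work.} The isotropic-point character is $1+\pphi+\ppsi$. Its nontrivial constituents correspond under Ennola duality to $\chi^{(n-2,2)}$ and $\chi^{(n-2,1^2)}$, not to $\chi^{(n-1,1)}$, which is the unipotent Weil character. Unlike for $\GL_n(q)$, the other subspace actions of $\GU_n(q)$ have permutation characters with further unipotent and non-unipotent constituents. This applies to totally isotropic lines, isotropic flags and non-isotropic points. So your fixed-point counts give $\pphi(h)+\ppsi(h)$, but no second relation that isolates $\pphi(h)$ or $\ppsi(h)$.

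The missing ingredient is a formula for each character separately. The paper uses the dual pair $\SU_n(q)\times\GU_2(q)$ to identify them with $D_{1_T}-1_S$ and $D_{\St}-1_S$, where $D_\alpha(g)=|T|^{-1}\sum_{x\in T}\overline{\alpha(x)}(-q)^{d(xg)}$. It then evaluates these at explicitly chosen $2$-elements.

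Parity also constrains the choice of character, which you leave open. The constituent your non-isotropic-point argument produces is $D_{\St}-1_S$, and it has odd degree when $n\equiv 3\pmod 4$. There one needs $D_{1_T}-1_S$, which is not a constituent of that permutation character.

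In the paper the choice comes from the modular argument:
\begin{itemize}
\item For a suitable primitive prime divisor $\ell$, one of $\pphi_S,\ppsi_S$ has $\ell$-defect zero.
\item By \cite{L1}, the other then contains $1_S$ exactly once on $\ell'$-elements.
\item Lemma \ref{ext}(iii) with $\beta=1_S$ then applies directly to $S\lhd A$.
\end{itemize}
With $\beta=1_N$ that lemma imposes no condition on $A/S$. So your successive application and the orbit-counting fallback are unnecessary.
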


\begin{proof}
(a) First we consider the case $n \geq 4$.
Let $V = \FF_{q^2}^n$ be endowed with a non-degenerate Hermitian form and let $G = \GU(V) \cong \GU_n(q)$ be the corresponding
general unitary group. Consider the permutation action of $G$ on the set $\Omega$ of singular $1$-dimensional subspaces of $V$. Then 
it is well known, see e.g. \cite[\S2]{ST} that the permutation character of this action is $1_G + \pphi+\ppsi$, where $\pphi$ and $\ppsi$ are 
unipotent characters of distinct degrees $>1$ (listed in \cite[Table 2]{ST}), both trivial at $Z:=\ZB(G)$ and irreducible over $S=[G/Z,G/Z]$. 
Fixing an orthonormal basis $(e_1, \ldots,e_n)$ of $V$ and considering the map
$\sigma:\sum^n_{i=1}x_ie_i \mapsto \sum^n_{i=1}x_i^re_i$, $x_i \in \FF_{q^2}$, for $q=r^f$ and $r|q$ a prime, we see that this 
action also extends to $A= \Gamma/Z$, where $\Gamma:=\langle G,\sigma\rangle$. It follows from Lemma \ref{ext}(i) that 
the character of this $A$-action is $1_A + \tilde\pphi+\tilde\ppsi$, where $\tilde\pphi$ and $\tilde\ppsi$ are rational extensions of 
$\pphi$ and $\ppsi$, both afforded by $\QQ$-representations.

\smallskip
(a1) The desired character $\chi$ will be chosen to be either $\tilde\pphi$ or $\tilde\ppsi$, depending on the congruence $n \pmod4$, and possibly 
multiplying by a linear character $\lambda$ of $A/S$ of order $2$. First suppose 
that $4|n=2m$. Choosing $\ell = \ppd(q,4m-2)$ (which exists by \cite{Zs}), we see that the character $\ppsi_S$ of degree
$q^3(q^{2m-2}-1)(q^{2m-1}+1)/(q+1)(q^2-1)$ has $\ell$-defect zero. On the other hand, as shown in \cite{L1}, one of $\pphi_S$ and $\ppsi_S$ 
contains $1_S$ with multiplicity one in its restriction to $\ell'$-classes, so this must be $\pphi_S$, of degree
$(q^n-1)(q^{n-1}+q^2)/(q+1)(q^2-1)$. Applying Lemma \ref{ext}(iii) to the character $\pphi_S$ of $S \lhd A$, we
see that some $\chi \in \{\tilde\pphi,\tilde\pphi\lambda\}$ contains $1_P$. To prove the existence of the desired $2$-element $h \in S$, we will use 
the dual pair $L \times T$ as in \cite[\S6]{LBST}, where $L = \SU(V) \cong \SU_n(q)$, $W=\FF_{q^2}^2$,  and $T = \GU(W) \cong \GU_2(q)$, and  
the identification $\pphi_S = D_{1_T} -1_S$, see  \cite[Propositions 6.3, 6.5]{LBST}. For any $\al \in \Irr(T)$, the $L$-character $D_\al$ can be computed using the formula 
$$D_\al(g) = \frac{1}{|T|}\sum_{x \in T}\overline{\al(x)}(-q)^{d(xg)},$$
where $d(xg)$ is the dimension of the $xg$-fixed point subspace of $xg$ acting on $W \otimes_{\FF_{q^2}}V$ (with $x$ acting on $W$ and
$g$ acting on $V$), see \cite[Lemma 5.5]{LBST}. Our element $h$ will be the image in $S$ of a suitably chosen $g \in L$.

First suppose that $n=4$, and choose $\xi \in \overline{\FF_q}$ of order $|\xi|=(q^2-1)_2 > (q-1)_2$, so that $\xi^{q-1}$ has order $(q+1)_2 > 1$. Then we can 
find a $2$-element $g \in L$ has spectrum
$$\bigl\{\xi,\xi^{-q},1,\xi^{q-1}\bigr\}$$
on $V$. Note that $d(xg)$ is $2$ if $x \in T$ is conjugate to $\diag(\xi^{-1},\xi^q)$, $\diag(1,\xi^{1-q})$, $I_2$ or $\xi^{1-q}I_2$, $1$ if $x \in T$ is conjugate to
$\diag(a,b)$ with $a \in \{1,\xi^{1-q}\}$ and $b \in \FF_{q^2} \smallsetminus \{1,\xi^{1-q}\}$, $b^{q+1}=1$, or $\begin{pmatrix}1 & 1\\0 &1\end{pmatrix}$
or $\xi^{1-q}\begin{pmatrix}1 & 1\\0 &1\end{pmatrix}$, and $0$ otherwise. It follows that
$$|T|\pphi(g) = (q^2-1)\bigl(q(q+1)+q(q-1)+2 \bigr)-(q+1)\bigl( 2q(q-1)(q-1)+2(q^2-1)\bigr) = 0$$
as desired. Next assume $4|n \geq 8$. 
Then we choose $\varep=\varep_{n-4} \in \overline{\FF_q}$ of order $|\varep|=(q^{n-4}-1)_2 \geq (q^4-1)_2 > (q^2-1)_2$, 
and take $\zeta=\varep^{-(q^{n-4}-1)/(q^4-1)}$ of order $|\zeta|=(q^4-1)_2 > (q^2-1)_2$.
Now there is a $2$-element $g \in L$ has spectrum
$$\bigl\{\varep,\varep^{-q}, \varep^{q^2},\ldots,\varep^{-q^{n-5}},\zeta,\zeta^{-q},\zeta^{q^2},\zeta^{-q^3}\bigr\}$$
on $V$. None of these eigenvalues belong to $\FF_{q^2}$, so $d(xg)=0$ for all $x \in T$, and hence
$$|T|D_{1_T}(g) = \sum_{x \in T}1 = |T|,$$
i.e. $\pphi(g)=0$.

\smallskip
(a2) Next suppose that $n=2m$ with $2 \nmid m \geq 3$. Choosing $\ell = \ppd(q,m)$, we see that the character $\pphi_S$ of degree
$q^2(q^{2m}-1)(q^{2m-3}+1)/(q+1)(q^2-1)$ has $\ell$-defect zero. Together with the main result of \cite{L1}, this implies that $\ppsi_S$, of degree
$(q^n+q)(q^n-q^2)/(q+1)(q^2-1)$, 
contains $1_S$ with multiplicity one in its restriction to $\ell'$-classes. Applying Lemma \ref{ext}(iii) to the character $\ppsi_S$ of $S \lhd A$, we
conclude that some $\chi \in \{\tilde\ppsi,\tilde\ppsi\lambda\}$ contains $1_P$.

By \cite[Proposition 6.3]{LBST}, $\ppsi_S$ is identified as $D_{\St}-1_S$ where $\St$ is the Steinberg character of $T$.
Choose $\varep=\varep_{n-2} \in \overline{\FF_q}$ of order 
$|\varep|=(q^{n-2}-1)_2 \geq (q^4-1)_2 > (q^2-1)_2$, 
and take $\xi=\varep^{-(q^{n-2}-1)/(q^2-1)}$ of order $|\xi|=(q^2-1)_2 > (q+1)_2$.
Now there is a $2$-element $g \in L$ has spectrum
$$\bigl\{\varep,\varep^{-q},\varep^{q^2}, \ldots,\varep^{-q^{n-3}},\xi,\xi^{-q}\bigr\}$$
on $V$. Recall that $\St(x)=0$ unless $x \in T$ is semisimple. If $x \in T$ is semisimple, then $d(xg)$ is $2$ if 
$x$ is conjugate to $\diag(\xi^{-1},\xi^q)$, and $0$ otherwise. It follows that
$$|T|D_{\St}(g)=q(q+1)q^2+q(q+1)-\frac{q(q+1)}{2}q(q-1)+\frac{q^2-q-4}{2}q(q+1) = |T|,$$
yielding $\ppsi(g)=0$.

\smallskip
(a3) Assume now that $n=2m+1$ with $2 \nmid m \geq 3$. Choosing $\ell = \ppd(q,m)$, we see that the character $\pphi_S$ of degree
$q^3(q^{2m}-1)(q^{2m-1}+1)/(q+1)(q^2-1)$ has $\ell$-defect zero. Again using \cite{L1}, this implies that $\ppsi_S$, of degree
$(q^n+1)(q^{n-1}-q^2)/(q+1)(q^2-1)$, 
contains $1_S$ with multiplicity one in its restriction to $\ell'$-classes. Applying Lemma \ref{ext}(iii) to the character $\ppsi_S$ of $S \lhd A$, we
conclude that some $\chi \in \{\tilde\ppsi,\tilde\ppsi\lambda\}$ contains $1_P$.

As in (a1), $\ppsi_S$ is identified as $D_{1_T}-1_S$. Choose $\varep=\varep_{n-3} \in \overline{\FF_q}$ of order 
$|\varep|=(q^{n-3}-1)_2 \geq (q^4-1)_2 > (q^2-1)_2$, 
and take $\xi=\varep^{-(q^{n-3}-1)/(q^2-1)}$ of order $|\xi|=(q^2-1)_2 > (q+1)_2$.
Then there is a $2$-element $g \in L$ has spectrum
$$\bigl\{\varep,\varep^{-q},\varep^{q^2}, \ldots,\varep^{-q^{n-4}},\xi,\xi^{-q},1\bigr\}$$
on $V$. Note that $d(xg)$ is $2$ if $x \in T$ is conjugate to $\diag(\xi^{-1},\xi^q)$ or $I_2$, $1$ if $x \in T$ is conjugate to
$\diag(1,b)$ with $b \in \FF_{q^2}$ and $b^{q+1}=1 \neq b$, or $\begin{pmatrix}1 & 1\\0 &1\end{pmatrix}$, and $0$ otherwise. It follows that
$$|T|\ppsi(g) = (q^2-1)\bigl(q(q+1)+1 \bigr)-(q+1)\bigl( q(q-1)q+(q^2-1)\bigr) = 0$$
as desired.

\smallskip
(a4) Next suppose that $n=2m+1$ with $2|m$. Choosing $\ell = \ppd(q,4m+2)$, we see that the character $\ppsi_S$ of degree
$q^2(q^{2m+1}+1)(q^{2m-2}-1)/(q+1)(q^2-1)$ has $\ell$-defect zero. Using \cite{L1}, this implies that $\pphi_S$, of degree
$(q^n-q)(q^n+q^2)/(q+1)(q^2-1)$, 
contains $1_S$ with multiplicity one in its restriction to $\ell'$-classes. Applying Lemma \ref{ext}(iii) to the character $\pphi_S$ of $S \lhd A$, we
conclude that some $\chi \in \{\tilde\pphi,\tilde\pphi\lambda\}$ contains $1_P$.

As in (a2), $\pphi_S$ is identified as $D_{\St}-1_S$ where $\St$ is the Steinberg character of $T$.
Choose $\varep=\varep_{n-1} \in \overline{\FF_q}$ of order 
$|\varep|=(q^{n-1}-1)_2 \geq (q^4-1)_2 > (q^2-1)_2$, 
and take $\xi=\varep^{(q^{n-1}-1)/(q+1)}$ of order $|\xi|=(q+1)_2 > 1$.
Then there is a $2$-element $g \in L$ has spectrum
$$\bigl\{\varep,\varep^{-q},\varep^{q^2}, \ldots,\varep^{-q^{n-2}},\xi\bigr\}$$
on $V$. Again, $\St(x)=0$ unless $x \in T$ is semisimple. If $x \in T$ is semisimple, then $d(xg)$ is $2$ if 
$x=\xi^{-1}I_2$, $1$ if $x$ is conjugate to $\diag(b,\xi^{-1})$ where $b \in \FF_{q^2} \smallsetminus \{\xi^{-1}\}$ and
$b^{q+1}=1$, and $0$ otherwise. It follows that
$$|T|D_{\St}(g)=q\cdot q^2+q(q+1)+q \cdot q(q-1)q+q \cdot q - \frac{q(q-1)}{2}q(q-1) + \frac{q^2-q-2}{2}q(q+1) = |T|,$$
yielding $\pphi(g)=0$.

\smallskip
(b) It remains to handle the case $n=3$. Consider the character $\chi=\chi^{(0,1,q)}_{(q-1)(q^2-q+1)}$ of $G=\GU_3(q)$ 
in the notation of \cite[p. 31]{E}.
Then it is easy to check that $\chi$ is trivial at $Z:=\ZB(G)$, real-valued, restricts irreducibly to $L:= \SU_3(q)$, by using the character table of $L$
given in \cite{Geck} and that $(q+1)/3 > 1$, and vanishes at any $2$-element $x$ of $L$ of order $(q^2-1)_2$. 
Direct computation shows that $[\chi_H,1_H]=1$ for $H := \GU_2(q) \times \GU_1(q)$. Thus $\chi$ occurs with 
multiplicity one in $(1_H)^G$, which is the permutation character of $G$ acting on the set $\Delta$ of non-singular $1$-dimensional subspaces of 
the natural Hermitian space $V:=\FF_{q^2}^3$ for $G$. Using an orthonormal basis of $V$ to define the group
$\Gamma:=\langle G,\sigma \rangle \cong \Gamma U(V)$ as in (a), one can verify (by checking the values at the 
class $C_4^{(1)}$, in the notation of \cite[Table 3.1]{Geck}) that $I:=I_A(\chi_S)=I_A(\chi) = \tilde G/Z$, 
where $\tilde G := \langle G,\sigma^f \rangle \cong G \cdot 2$ if $q=r^f$. The permutation action of $G$ on $\Delta$ extends to 
$\tilde G$. By Lemma \ref{ext}(ii), $\chi$ extends to a strongly real character $\tilde\chi$ of $\tilde G$ which can be viewed as a character of 
$I_A(\chi_S)$. Note that
$[G:H]=q^2-q+1$ is odd, so a Sylow $2$-subgroup $R$ of $G$ lies in $H$, and hence $\chi_R$ contains $1_R$. Applying Lemma \ref{ext}(iii)
to the normal subgroup $G$ of index $2$ in $\tilde G$, we conclude that $\tilde\chi\lambda$ lies over $1_Q$ for some linear character $\lambda$ of 
$\tilde G/G$ and $Q \in \Syl_2(I)$.
\end{proof}

\begin{prop}\label{typeD-unip}
Let $q$ be any odd prime power, and let $S:=\PO^\eps_{2n}(q)$ with $2 \nmid n \geq 5$, $\eps=\pm$, and $A:=\Aut(S)$.
Then $A$ admits a character $\chi$ afforded by a $\QQ$-representation, such that $\chi_S \in \Irr(S)$, 
$\chi_P$ contain $1_P$ for $P \in \Syl_2(A)$, and $\chi(h)=0$ for some $2$-element $h \in S$.
\end{prop}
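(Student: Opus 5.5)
We mimic the unitary case, Proposition \ref{gu-unip}(a). Let $V=\FF_q^{2n}$ carry a non-degenerate quadratic form of type $\eps$, and let $G=\GO(V)$ (or the conformal group $\CO(V)$, so that the diagonal automorphisms are included). Consider the rank $3$ permutation action of $G$ on the set $\Omega$ of singular $1$-spaces of $V$. It is well known (see e.g. \cite[\S2]{ST}) that its permutation character is $1+\pphi+\ppsi$, where $\pphi,\ppsi$ are unipotent characters of distinct degrees $>1$. These are trivial on the center and restrict irreducibly to $S$, because unipotent characters of classical groups are $\Aut(S)$-invariant by \cite[Theorem 2.5]{Ma}. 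Since $n$ is odd, $S$ has no triality, and $A$ is generated by $S$, the diagonal automorphisms, the field automorphism $\sigma$ (defined via a suitable basis) and the graph automorphism (given by a reflection). All of these preserve $\Omega$, so the action extends to $A$. By Lemma \ref{ext}(i), the permutation character of $A$ is $1_A+\tilde\pphi+\tilde\ppsi$, where $\tilde\pphi$ and $\tilde\ppsi$ are rational extensions with multiplicity one. By \cite[Corollary (10.2)(c)]{Is} both are afforded by $\QQ$-representations, and so are their twists by linear characters of order $\le 2$ of $A/S$.

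Next we decide which of $\pphi,\ppsi$ contains $1_P$, using Lemma \ref{ext}(iii). The degrees are of the shape
\[
\frac{q(q^{n}-\eps)(q^{n-2}+\eps)}{q^2-1} \quad\text{and}\quad \frac{q^2(q^{2n-2}-1)}{q^2-1}
\]
(up to the exact convention, as in \cite[Table 2]{ST}). We pick a primitive prime divisor $\ell$, for instance $\ell=\ppd(q,2n)$ if $\eps=-$ or $\ell=\ppd(q,n)$ if $\eps=+$, chosen so that one of the two characters has $\ell$-defect zero. By the result of \cite{L1} on $\ell$-modular reductions of rank $3$ permutation modules, the other character then contains $1_S$ with multiplicity one in its restriction to $\ell'$-classes. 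Since $\tilde\pphi,\tilde\ppsi$ are real, Lemma \ref{ext}(iii) with $\beta=1_S$ gives a linear character $\lambda$ of $A/S$ of order $\le 2$ such that $\chi:=\tilde\pphi\lambda$ (or $\tilde\ppsi\lambda$) satisfies: $\chi_P$ contains $1_P$. Here $A/S$ need not have order $2$, but the $\beta=1_N$ branch of Lemma \ref{ext}(iii) is stated with no restriction on $|G/N|$. Moreover $\chi_S\in\Irr(S)$ and $\chi$ is afforded by a $\QQ$-representation.

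The remaining and hardest step is to find a $2$-element $h\in S$ with $\chi(h)=0$. Because $n$ is odd, we take $g\in\Omega^\eps_{2n}(q)$ to be a $2$-element preserving an orthogonal decomposition $V=V_1\perp V_2$, with $\dim V_1=2n-2$ or $2n-4$. On $V_1$ we let $g$ act with eigenvalues of large $2$-power order $\varep,\varep^{q},\dots$ lying in no proper subfield, as in Propositions \ref{gl-unip} and \ref{gu-unip}. On the small space $V_2$ (of dimension $2$ or $4$) we choose $g$ to have a controlled number of fixed singular points. The types of $V_1$ and $V_2$ must be arranged so that $g$ lies in $\Omega$; this is where $n$ odd and $q$ odd enter.

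For this $g$ we compute two quantities. The first is $1+\pphi(g)+\ppsi(g)$, the number of singular points fixed by $g$. The second is a further relation from the Lusztig/Harish-Chandra description of $\pphi,\ppsi$ in terms of the parabolic $P_1$ (for instance, $\pphi-\ppsi$ relates to a $\tilde W$ Hecke algebra computation). Alternatively, we evaluate via the dual pair $\Sp_2(q)\times\GO(V)$ (Weil representation), analogously to the formula for $D_\al$ in Proposition \ref{gu-unip}. The choice of $V_2$ is then tuned so that the selected constituent vanishes; for instance, if $g$ fixes exactly one singular point, then $\pphi(g)+\ppsi(g)=0$, and one still needs $\pphi(g)=0$ separately. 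Making this second evaluation explicit is where we expect the main difficulty. We would first try a case split on $\eps$ and on $q\pmod 4$, with character values read off from \cite{LBST}-type formulas.
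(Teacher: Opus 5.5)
Your construction of $\chi$ matches the paper's. It uses the rank $3$ action on singular points extended to $A=\Gamma/Z$, and Lemma~\ref{ext}(i) for rational extensions with Schur index one. It takes $\ell=\ppd(q,n)$ resp.\ $\ppd(q,2n)$, so that $\pphi_S$ has $\ell$-defect zero. Liebeck's result then puts $1_S$ with multiplicity one in $(\ppsi_S)^\circ$ (this is \cite{L2}, the symplectic/orthogonal paper, not \cite{L1}). Finally, Lemma~\ref{ext}(iii) is applied with $\beta=1_S$. Note that this forces $\chi\in\{\tilde\ppsi,\tilde\ppsi\lambda\}$, so the character that must vanish on a $2$-element of $S$ is $\ppsi$; there is no ``or''. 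Irreducibility of $\pphi_S,\ppsi_S$ is a separate known fact and does not follow from $\Aut(S)$-invariance. The genuine gap is the last step: you never exhibit $h$ or prove $\ppsi(h)=0$. That vanishing is the real content of the proposition, and the rest is a routine application of the preliminary lemmas.

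Your sketch does not close this gap. Counting fixed singular points gives only one relation, $\rho(g)=1+\pphi(g)+\ppsi(g)$, for two unknowns. Your sample scenario also cannot occur. Since $n$ is odd, $\pphi(1)$ is odd and $\ppsi(1)$ is even, so $|\Omega|$ is even. Hence for every $2$-element $g$, $\rho(g)$ is even and the rational integer $\pphi(g)\equiv\pphi(1)\pmod 2$ is odd. So $g$ never fixes exactly one singular point, and $\pphi(g)=0$ is impossible. What you need is $\pphi(g)=\rho(g)-1$, and that requires an independent evaluation of $\ppsi(g)$.

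The paper obtains this from the dual pair $\Sp_2(q)\times\GO(V)$ \cite{LBST}, namely $\ppsi_S=D_\St-1_S$ with $D_\St(g)=|T|^{-1}\sum_{x\in T}\St(x)\omega(xg)$. It takes $g\in\GL_{n-1}(q)\times\mathsf{C}_{q-\eps}<\SO^+_{2n-2}(q)\times\SO^\eps_2(q)$ with spectrum $\varep^{\pm q^i}$, $|\varep|=(q^{n-1}-1)_2$, and $\xi^{\pm1}$, $|\xi|=(q-\eps)_2$. Then $d(xg)=0$ for all semisimple $x$ not conjugate to $\diag(\xi,\xi^{-1})$. Since $\omega(xg)=\pm q^{d(xg)/2}$ only up to sign, it pins the sign to $+$ for $x$ in the split torus via G\'erardin's formula \cite{Gr}: the determinant of $xg$ on the Lagrangian is $a^{2n}$, a square. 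In the cases $\eps=+$, $4\mid q-1$ and $4\mid q+\eps$ it then bounds $|T|D_\St(g)$ strictly between $0$ and $2|T|$, so integrality forces $D_\St(g)=1$. In the case $\eps=-$, $4\mid q+1$ it only gets $D_\St(g)\in\{0,1\}$. It rules out $0$ by exactly the parity fact above: there $\rho(g)=0$, and $\ppsi(g)=-1$ would force $\pphi(g)=0$. Without an explicit element and an evaluation of this kind, your proposal does not establish $\chi(h)=0$.
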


\begin{proof}
Let $V = \FF_q^{2n}$ be endowed with a non-degenerate quadratic form, constructed as in \cite[Proposition 2.5.12]{KlL},
and let $G = \GO(V) \cong \GO^\pm_{2n}(q)$ be the corresponding
orthogonal group, so that $S=[G,G]/(\ZB(G) \cap [G,G])$. Next, one can define a finite group 
$\Gamma = \langle \CO(V),\phi \rangle \rhd G$
where the conformal group $\CO(V)$ consists of all $x \in \GL(V)$ that preserve the form on $V$ up to a scalar and 
$\phi$ is a semilinear transformation of $V$, such that $A=\Gamma/Z$ where $Z :=\ZB(\GL(V))$, see \cite[Theorem 2.1.4]{KlL}.
Consider the permutation action of $G$ on the set $\Omega$ of singular $1$-dimensional subspaces of $V$. Then 
it is well known, see e.g. \cite[\S2]{ST} that the permutation character $\rho$ 
of this action is $1_G + \pphi+\ppsi$, where $\pphi$ and $\ppsi$ are 
unipotent characters of degrees $(q^n-\eps)(q^{n-1}+\eps q)/(q^2-1)$, respectively $(q^{2n}-q^2)/(q^2-1)$, 
both trivial at $Z$ and irreducible over $S$. 
This permutation action also extends to $\Gamma/Z$. It follows from Lemma \ref{ext}(i) that 
the character of this $A$-action is $1_A + \tilde\pphi+\tilde\ppsi$, where $\tilde\pphi$ and $\tilde\ppsi$ are rational extensions of 
$\pphi$ and $\ppsi$, both afforded by $\QQ$-representations.

The desired character $\chi$ will be chosen to be $\tilde\ppsi$, possibly 
multiplying by a linear character $\lambda$ of $A/S$ of order $2$. Choosing $\ell = \ppd(q,n)$ when $\eps=+$ and 
$\ell=\ppd(q,2n)$ when $\eps=-$, (which exists by \cite{Zs}), we see that the character $\pphi_S$ 
has $\ell$-defect zero. On the other hand, as shown in \cite{L2}, one of $\pphi_S$ and $\ppsi_S$ 
contains $1_S$ with multiplicity one in its restriction to $\ell'$-classes, so this must be $\ppsi_S$. 
Applying Lemma \ref{ext}(iii) to the character $\ppsi_S$ of $S \lhd A$, we
see that some $\chi \in \{\tilde\ppsi,\tilde\ppsi\lambda\}$ lies over $1_P$. 

To prove the existence of the desired $2$-element $h \in S$, we will use 
the dual pair $L \times T$ as in \cite[\S5]{LBST}, where $L = \Omega(V) \cong \Omega^\eps_{2n}(q)$, $W=\FF_q^2$,  and 
$T = \Sp(W) \cong \Sp_2(q)$, and  
the identification $\ppsi_S = D_\St -1_S$ where $\St$ is the Steinberg character of $T$, see  \cite[Proposition 5.7]{LBST}.  
By \cite[Lemma 5.5]{LBST}, the $L$-character $D_\St$ can be computed using the formula 
$$D_\St(g) = \frac{1}{|T|}\sum_{x \in T}\St(x)\omega(xg),$$
where $\omega$ is a (reducible) Weil character of $\Sp(V \otimes_{\FF_q}W) \cong \Sp_{4n}(q)$, and it is known that 
$\omega(xg)= \pm q^{d(xg)/2}$ when both $x$ and $g$ are semisimple, 
where $d(xg)$ is the dimension of the $xg$-fixed point subspace of $xg$ acting on 
$V \otimes_{\FF_{q}}W$ (with $x$ acting on $W$ and $g$ acting on $V$).
As in \cite[\S5]{LBST}, we fix a basis $(v_1, \ldots,v_{2n})$ of $V$, in which the quadratic form on $V$ has Gram matrix
$\diag(1,1,\ldots,1,\gamma)$ for a suitable $\gamma \in \FF_q^\times$, and a basis $(e,f)$ of $W$ such that the symplectic form on $W$ takes value
$1$ at the pair $(e,f)$. This ensures that 
$$U:=\langle e \otimes v_1, \ldots, e\otimes v_{2n-1},e\otimes v_{2n}\rangle_{\FF_q},~~
    U':=\langle f \otimes v_1, \ldots, f\otimes v_{2n-1},f\otimes \gamma^{-1}v_{2n}\rangle_{\FF_q}$$
form a complementary pair of maximal totally isotropic subspaces in $V \otimes_{\FF_q}W$, so that 
$$\mathrm{Stab}_{\Sp(V \otimes _{\FF_q}W)}(U,U') \cong \GL_{2n}(q).$$
Choose $\varep \in \overline{\FF_q}$ of order 
$|\varep|=(q^{n-1}-1)_2 \geq (q^2-1)_2 > (q \pm 1)_2$, 
and take $\xi=\varep^{(q^{n-1}-1)/(q-\eps)}$ of order $|\xi|=(q-\eps)_2$.
Then we choose the desired element $h$ to be the image in $S$ of any $2$-element $g \in L$ that belongs to
$\GL_{n-1}(q) \times \mathsf{C}_{q-\eps} < \SO^+_{2n-2}(q) \times \SO^\eps_2(q)$ and has spectrum
$$\bigl\{\varep,\varep^{q},\ldots,\varep^{q^{n-2}},\varep^{-1},\varep^{-q},\ldots,\varep^{-q^{n-2}},\xi,\xi^{-1}\bigr\}$$
on $V$. Recall that $\St(x)=0$ unless $x \in T$ is semisimple. If $x \in T$ is semisimple, then $d(xg)$ is $4$ if 
$x=-I_2$ and $\xi=-1$, $2$ if 
$x$ is conjugate to $\diag(\xi,\xi^{-1})$ and $\xi \neq -1$, and $0$ otherwise. Also note that if 
$x \in \Stab_T\bigl(\langle e\rangle_{\FF_q},\langle f \rangle_{\FF_q}\bigr) \cong \mathsf{C}_{q-1}$,
say $x(e) = ae$ for some $a \in \FF_q^\times$,
then $xg$ stabilizes both $U$ and $U'$, and has determinant $a^{2n}\det{g}=a^{2n}$, a square in $\FF_q^\times$. This ensures by 
\cite[(13.3)]{Gr} that 
$$\omega(xg)=q^{d(xg)/2}$$ 
for such elements $x$.

\smallskip
(a) First suppose that $\eps=+$ and $4|(q-1)$. Then in the above construction we can choose $\gamma=1$. The preceding remarks show
that $(\omega(xg),\St(x))=(q,1)$ for the $q(q+1)$ elements $x \in T$ that are conjugate to $\diag(\xi,\xi^{-1})$, 
$(\omega(xg),\St(x))=(1,q)$ for the two elements $x = \pm I_2$, and $(\omega(xg),\St(x))=(\pm 1, \pm 1)$ for the remaining $q(q^2-3q-2)$
semisimple elements $x \in T$. It follows that $|T|D_\St(g)$ is at most
$$q^2(q+1) +2q+q(q^2-3q-2) =2q^2(q-1) <2|T|$$
and at least
$$q^2(q+1) +2q-q(q^2-3q-2) =4q(q+1)> 0.$$
Since $D_\St(g) \in \ZZ$, we conclude that $D_\St(g)=1$, and so $\ppsi(g)=0$.

\smallskip
(b) Next suppose that $4|(q+\eps)$; in particular $\xi = -1$. Then  
$(\omega(xg),\St(x))=(q^2,q)$ for $x=-I_2$, 
$(\omega(xg),\St(x))=(1,q)$ for $x = I_2$, and $(\omega(xg),\St(x))=(\pm 1, \pm 1)$ for the remaining $q(q^2-2q-1)$
semisimple elements $x \in T$. It follows that $|T|D_\St(g)$ is at most
$$q^3 +q+q(q^2-2q-1) =2q^2(q-1) <2|T|$$
and at least
$$q^3 +q-q(q^2-2q-1) =2q(q+1)> 0.$$
Since $D_\St(g) \in \ZZ$, we conclude that $D_\St(g)=1$, and thus $\ppsi(g)=0$.

\smallskip
(c) Finally, assume that $\eps=-$ and $4|(q+1)$. Then  
$(\omega(xg),\St(x))=(1,q)$ for the two elements $x= \pm I_2$, 
$(\omega(xg),\St(x))=(1,1)$ for the $q(q+1)(q-3)/2$ elements $x \in T$ that are conjugate to $\diag(b,b^{-1})$
with $b \in \FF_q^\times \smallsetminus \{\pm 1\}$, $(\omega(xg),\St(x))=(\pm q,-1)$ for the $q(q-1)$ elements $x \in T$ that are conjugate to 
$\diag(\xi,\xi^{-1})$, and $(\omega(xg),\St(x))=(\pm 1, \pm 1)$ for the remaining $q(q-1)(q-3)/2$
semisimple elements $x \in T$. It follows that $|T|D_\St(g)$ is at most
$$2q+q(q+1)(q-3)/2 + q^2(q-1)+q(q-1)(q-3)/2 =2q(q-1)^2 <2|T|$$
and at least
$$2q+q(q+1)(q-3)/2 - q^2(q-1)-q(q-1)(q-3)/2 =-q(q-1)^2> -|T|.$$
Since $D_\St(g) \in \ZZ$, we deduce that $D_\St(g) \in \{0,1\}$, whence $\ppsi(g)\in \{-1,0\}$. Suppose now
that $\ppsi(g)=-1$. Then $\rho(g)=1+\pphi(g)+\ppsi(g)=\pphi(g)$. Note that  $\rho(g)$ is the number of $g$-fixed 
singular $1$-dimensional subspaces in $V$, which is zero since no eigenvalue of $g$ belongs to $\FF_q^\times$. 
It follows that $\pphi(g)=0$, which is a contradiction since $g$ is a $2$-element and 
$\pphi(1) = (q^n+1)(q^{n-1}-q)/(q^2-1)$
is odd. Hence $\ppsi(g)=0$, as stated.
\end{proof}

Now we can complete the proof of Theorem \ref{simple2}, which we restate:

\begin{thm}\label{simple2b}
Let $S$ be a finite non-abelian simple group. Then at least one of the following two statements holds.
  
\begin{enumerate}[\rm(i)]  
\item $S$ admits an irreducible character $\alpha$ and a $2$-element $x \in S$ such that 
$\alpha^\varsigma(x)=0$ for all $\varsigma \in \Aut(S)$ and $\alpha$ extends to 
a strongly real character of $I=I_{\Aut(S)}(\alpha)$ which lies above $1_P$, where $P$ is a Sylow $2$-subgroup of $I$. 

\item If A is an almost simple group with socle $S$ and $A/S$ is a $2$-group, then there exists a character
$\alpha \in \irr S$ of $2$-defect zero  such that 
$\alpha$ extends to a strongly real character $\beta$ of  $I = I_A(\alpha)$ and $\beta$ lies over $1_P$,  where $P$ is a Sylow $2$-subgroup of $I$. 
\end{enumerate} 
\end{thm}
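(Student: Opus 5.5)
We argue by running through the classification of finite simple groups, in the same way as in the proof of Theorem \ref{simple2a}. Sporadic groups, $\AAA_5$, $\AAA_6$ and $\tw2 F_4(2)'$ are covered by Lemma \ref{small}(b). The alternating groups $\AAA_n$ with $n \geq 7$ are covered by Proposition \ref{almost-simple-others}. So we may assume that $S$ is of Lie type in characteristic $r$.

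The main tool is the following observation. Suppose $S$ has a \emph{real-valued} irreducible character $\theta$ of $2$-defect zero. By Lemma \ref{Ldef0}, $\theta$ contains $1_Q$ with odd multiplicity, where $Q\in\Syl_2(S)$. Let $I=I_A(\theta)$ for any $A$ with $A/S$ a $2$-group. Then Lemma \ref{Lgab}(iii) produces a strongly real extension of $\theta$ to $I$ lying over $1_P$, which gives (ii). We apply this in three situations.
\begin{itemize}
\item[$\bullet$] If $r=2$, the Steinberg character is rational and of $2$-defect zero.
\item[$\bullet$] If $r>2$, we reuse the semisimple characters $\chi_s$ from the proof of Theorem \ref{simple2a}. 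There $s$ has odd order and satisfies \eqref{ind2}, and $\theta=(\chi_s)_S$ has $2$-defect zero. We have $\overline{\chi_s}=\chi_{s^{-1}}$, so $\theta$ is real exactly when $s$ is $L$-conjugate to $s^{-1}$. This holds automatically for the types whose Weyl group contains $-1$, with $L$ simply connected or centralizers of $s$ connected: namely $\PSL_2(q)$, $\PSp_{2n}(q)$, $\Omega_{2n+1}(q)$, $\PO^\pm_{2n}(q)$ with $n$ even, $G_2$, $\tw2G_2$, $\tw3D_4$, $F_4$, $E_7$ and $E_8$. For each of these we check that the chosen element (a generator of a cyclic maximal torus, or a ppd element) is real in $L$.
\item[$\bullet$] For $E_6(q)$ and $\tw2E_6(q)$, $s$ is not real, but the element $s_2\in F_4(q)$ noted at the end of that proof is real. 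It gives a real $2$-defect zero character provided $|\CB_L(s_2)|_2=|G/S|_2$. This needs checking, and failing that we use other elements.
\end{itemize}

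The remaining groups need the unipotent Propositions proved above.
\begin{itemize}
\item[$\bullet$] $\PSL_n(q)$ and $\PSU_n(q)$ with $n\geq 3$, $q$ odd, and $\PO^\pm_{2n}(q)$ with $n$ odd. Here the $\chi_s$ above are typically non-real: $s$ of order $q^2+\eps q+1$, or a ppd of $q^{n}-\eps^n$, is not conjugate to its inverse.
\item[$\bullet$] For these groups we aim at (i), noting that the characters used restrict irreducibly to $S$ and are $\Aut(S)$-invariant, so $\alpha^\varsigma(x)=\alpha(x)=0$.
\item[$\bullet$] $\PSL_n(q)$ with $2\nmid nq$: use $\tilde\pb$ from Proposition \ref{gl-unip}(i), which is already strongly real.
\item[$\bullet$] $\PSL_n(q)$ with $4\mid n$: use $\tilde\pc$. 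With $n\equiv 2\pmod 4$: use $\tilde\pd$. These are afforded by $\QQ$-representations, hence strongly real.
\item[$\bullet$] $\PSU_n(q)$ with $n\geq 4$: Proposition \ref{gu-unip} gives a $\QQ$-representation. For $n=3$ it gives the (i)-statement directly.
\item[$\bullet$] $\PO^\pm_{2n}(q)$ with $n$ odd: use Proposition \ref{typeD-unip}.
\item[$\bullet$] $\PSL_n(q)$, $\PSU_n(q)$ with $q$ even: these fall under the Steinberg case.
\end{itemize}

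\textbf{Main obstacle.} The main difficulty is the case $r>2$ for groups where the real-element choice is delicate: orthogonal groups with $n$ even and $d=4$, and $E_6^\eps$. There we must check that the specific $s$ is real in $L$ and that $\theta$ stays irreducible and real on $S$. This is the step we would verify first, before going through the families one by one. If some $s$ fails to be real, we would switch to another torus, say $s$ of order dividing $(q^{n/2}+1)^2$, inside a subgroup of type $\Omega^-_{n}\times\Omega^-_{n}$. Low-rank exceptions missed by Zsigmondy are handled as in Lemma \ref{small}.
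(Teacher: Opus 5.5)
Your proposal is correct and follows the paper's proof essentially step for step. For part (ii) you use real $2$-defect-zero semisimple characters $\chi_s$ (Steinberg in characteristic $2$, and $s=s_2\in F_4(q)$ for $E_6^\eps$) together with Lemma \ref{Lgab}(iii); for part (i) you use Propositions \ref{gl-unip}, \ref{gu-unip} and \ref{typeD-unip} in the remaining odd-characteristic linear, unitary and odd-$n$ orthogonal cases.

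The steps you flag as obstacles are immediate. The paper gets realness of every chosen $s$, including $\PO^\pm_{2n}(q)$ with $n$ even and $s_2\in F_4(q)$, directly from \cite[Proposition 3.1(ii)]{TZ}. For $E_6^\eps$, condition \eqref{ind2} holds because $|G/S|=\gcd(3,q-\eps)$ and $\Phi_{12}(q)\Phi_3(q)$, $\Phi_{12}(q)\Phi_6(q)$ are all odd for odd $q$.
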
 

\begin{proof}
We will assume that $S$ is not isomorphic to any of the simple groups listed in Lemma \ref{small}.  Next, the proof of Proposition \ref{almost-simple-others} shows that Theorem \ref{simple2}(i) holds for $S=\AAA_n$ with $n \geq 7$, and Theorem \ref{simple2} holds when $S$ is  a sporadic
simple group by Lemma \ref{small}. It remains therefore to consider simple groups of Lie type $S$.

\smallskip
(a) First we show that Theorem \ref{simple2}(ii) holds if $S$ is not one of the following groups in odd characteristic: 
$\PSL_n(q)$ or $\PSU_n(q)$ with $n \geq 3$,
$\PO^\pm_{2n}(q)$ with $2 \nmid n \geq 5$, in which case $\beta$ can be chosen to be strongly real and containing $1_P$ with odd multiplicity.
Note that it suffices to show that $S$ admits a
real-valued irreducible character $\theta$ of $2$-defect zero. Indeed, 
$\theta$ is strongly real by Lemma \ref{triv}(i). Applying Lemma \ref{Lgab}(iii) to $(G,N):=(I_A(\theta),S)$, we see
that $\theta$ has a real-valued extension $\beta$ to $I=I_A(\theta)$ which contains $1_P$ with odd multiplicity for $P \in \Syl_2(I)$. 
Since $\beta_S=\theta$ has Frobenius--Schur indicator $1$, the same holds for $\beta$. 

Now, for all the indicated groups $S$, we can follow the proof of Theorem \ref{simple2a} (and using $s=s_2 \in F_4(q)$ 
when $S$ is of type $E_6$ or $\tw2 E_6$). 
Applying \cite[Proposition 3.1(ii)]{TZ}, we see that the chosen element $s$ is real, whence $\theta$ is real as desired.

\smallskip
(b) Now, if $S = \PSL_n(q)$ with $n \geq 3$ and $2 \nmid q$, then Theorem \ref{simple2}(i) holds for $S$ by Proposition \ref{gl-unip}.
If $S = \PSU_n(q)$ with $n \geq 3$ and $2 \nmid q$, then Theorem \ref{simple2}(i) holds for $S$ by Proposition \ref{gu-unip}.
If $S = \PO^\pm_{2n}(q)$ with $2 \nmid n \geq 5$ and $2 \nmid q$, then Theorem \ref{simple2}(i) holds for $S$ by Proposition \ref{typeD-unip}.
\end{proof}

\section{Almost simple groups. II}
In this section we will complete the proof of Theorem \ref{simple1} for $p>2$.

\begin{lem}\label{typeD4-unip}
Let $q$ be any prime power coprime to $3$, and let $S:=\PO^+_{8}(q)$ and $A:=\Aut(S)$.
Then $S$ admits an irreducible character $\alpha$ such that $\alpha$ extends to a character $\chi$ of a 
$\QQ$-representation of $I=I_A(\alpha)$,  
$\chi_P$ contain $1_P$ for $P \in \Syl_3(I)$, and there is a $3$-element $x \in S$ such that $\alpha^\varsigma(x)=0$ for all $\varsigma \in A$.
\end{lem}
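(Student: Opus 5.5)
The plan is to imitate the proof of Proposition \ref{typeD-unip}, now with $p=3$, $n=4$, $\eps=+$. Let $V=\FF_q^8$ carry a split quadratic form and let $G=\GO(V)$. Let $\Omega$ be the set of singular $1$-spaces of $V$. By \cite[\S2]{ST}, the permutation character of $G$ on $\Omega$ is $\rho=1_G+\pphi+\ppsi$, where
$$\pphi(1)=q(q^2+1)^2,\qquad \ppsi(1)=q^2(q^4+q^2+1)=q^2\Phi_3(q)\Phi_6(q).$$
I would take $\alpha:=\ppsi_S$. This is the unipotent character with trivial orthogonal-type labelling, and it should be fixed by triality because its degree occurs only once among unipotent degrees of $D_4(q)$; I would check this via \cite[Theorem 2.5]{Ma}. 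Hence $I=A$.

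\emph{Step 1: a rational extension.} The action on $\Omega$ extends to $\Gamma/Z$, where $\Gamma=\langle \CO(V),\phi\rangle$, but not to triality. So I would first extend $\alpha$ rationally to $B:=\Gamma/Z$ using Lemma \ref{ext}(i), exactly as in Proposition \ref{typeD-unip}. The Schur index is $1$ because $\alpha$ occurs with multiplicity one in the permutation character (\cite[Corollary (10.2)(c)]{Is}).

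To pass from $B$ to $A$, the quotient $A/B$ has order dividing $3$. Since $\alpha$ is $A$-invariant and $[A:B]$ is odd, a $3$-rational extension is obtained by the Galois-orbit argument of Lemma \ref{Lgab}(i). For a $\QQ$-representation I would instead use a triality-stable permutation module in which $\ppsi$ has multiplicity one; the natural candidate is the action on the $\Omega$-orbit union of its two triality images, namely singular points and the two families of maximal totally singular subspaces. I expect this bookkeeping to be the main obstacle. If it fails, I would fall back to $\alpha=\pphi_S$, whose stabilizer $I$ avoids the triality $3$-part, so that $I\le B$ and Step 1 is already complete.

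\emph{Step 2: $\chi$ lies over $1_P$.} Take $\ell=\ppd(q,4)$, which divides $q^2+1$. Since $\Phi_4^2$ exactly divides $|S|$ and $\Phi_4(q)^2$ divides $\pphi(1)$, the character $\pphi_S$ has $\ell$-defect zero. By \cite{L2}, some constituent of $\rho-1$ contains $1_S$ with multiplicity one on $\ell'$-classes, and it must therefore be $\ppsi_S$. Lemma \ref{ext}(iii) then applies with $p=3$; here the twisting character $\lambda$ has order at most $\gcd(2,3)=1$, so $\chi$ itself lies over $1_P$.

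\emph{Step 3: vanishing.} Since $3\nmid q$, exactly one of $\Phi_3(q),\Phi_6(q)$ is divisible by $3$, so $\ppsi(1)$ is divisible by $3$ and vanishing on a $3$-element is plausible. I would take $g\in\Omega(V)$ a $3$-element lying in $\GL_3(q)\times\SO^+_2(q)$ or a torus of order $\Phi_3$ or $\Phi_6$ type, with no eigenvalue in $\FF_q^\times$, so that $\rho(g)=0$. I would then compute $\pphi(g)$ (or $\ppsi(g)=D_\St(g)-1$ via the $\Sp_2$ dual pair formula of \cite[Lemma 5.5]{LBST}) using the same sandwich estimate $-|T|<|T|D_\St(g)<2|T|$ as in Proposition \ref{typeD-unip}, concluding $\ppsi(g)=0$. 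Invariance of $\alpha$ then gives $\alpha^\varsigma(x)=\alpha(x)=0$ for all $\varsigma$.
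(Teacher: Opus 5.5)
Your proposal has a genuine gap. It starts with the claim that $\ppsi_S$ is fixed by triality, so that $I=A$. This is false, and the paper's proof rests on the opposite fact (cf.\ \cite[Theorem 2.5(ii)]{Ma}): the degree $q^2(q^4+q^2+1)$ is shared by three unipotent characters of $D_4(q)$, and triality permutes them. Concretely, triality permutes the actions of $S$ on singular points and on the two families of maximal totally singular subspaces of $V$. Their permutation characters are $1+\pphi+\ppsi$, $1+\pphi+\ppsi'$ and $1+\pphi+\ppsi''$, with $\ppsi,\ppsi',\ppsi''$ pairwise distinct; it is the common constituent $\pphi$ that is triality-stable.

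So your fallback has the roles reversed, and it would fail anyway. Since $\pphi(1)=q(q^2+1)^2$ is prime to $3$, $\pphi$ vanishes at no $3$-element. Moreover $\pphi_S$ is the $\ell$-defect-zero constituent, to which Step 2 cannot apply. The correct conclusion is $I=\Gamma/\ZB(\GL(V))$. Thus the rational extension to $B$ in your Step 1 is already the whole extension step, and Step 2 then works over $I$ with no twist, as in the paper.

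The real damage is in Step 3: your closing sentence, ``invariance of $\alpha$ gives $\alpha^\varsigma(x)=\alpha(x)=0$'', is not available. For $\varsigma\in A\smallsetminus I$ you must show $\ppsi'(g)=\ppsi''(g)=0$, i.e.\ that $\alpha$ vanishes on the images of $x$ under triality. Triality does not preserve the natural module, so these images need not have the spectrum of $g$ on $V$, and a dual-pair computation at $g$ alone says nothing about them. This is precisely the new point of the lemma compared with Proposition \ref{typeD-unip}.

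The paper handles it as follows.
\begin{itemize}
\item It takes $q\equiv\kappa\pmod 3$ and $g\in\GL^\kappa_3(q)\times\mathsf{C}_{q-\kappa}<\SO^\kappa_6(q)\times\SO^\kappa_2(q)$ (so $\GU_3(q)\times\mathsf{C}_{q+1}$ when $\kappa=-$). This $g$ has six eigenvalues $\varep^{\pm q^i}$ of order $3^{e+1}$ and an inverse pair $\xi^{\pm1}$ of order $3^e$.
\item It lifts $g$ to an element of the same order in $\mathrm{Spin}^+_8(q)$. Every $\Aut(S)$-image then has the same order and the same centralizer order $(q^3-\kappa)(q-\kappa)$ there.
\item Hence every such image again has, on $V$, six eigenvalues of order $3^{e+1}$ and an inverse pair of order $3^{e'}>1$.
\item The $D_\St$ estimate uses only this shape, together with the argument $\rho(g)=0$, $3\nmid\pphi(1)$ when $\kappa=-$.
\end{itemize}

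Note also that when $q\equiv 1\pmod 3$, every $3$-element of $\Omega^+_8(q)$ has an eigenvalue in $\FF_q^\times$. So your ``$\rho(g)=0$'' cannot be arranged there, and the sandwich must give $D_\St(g)=1$ directly.

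A more elementary fix is also available. Let $\rho'=1+\pphi+\ppsi'$ be the permutation character of $\Omega(V)$ on one family of maximal totally singular subspaces. Then $\ppsi'(g)-\ppsi(g)=\rho'(g)-\rho(g)$, and likewise for $\ppsi''$. So it suffices to check that $g$ fixes as many maximal totally singular subspaces in each family as it fixes singular points. For the paper's $g$ these counts are all $2$ if $\kappa=+$ and all $0$ if $\kappa=-$.

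Without such an argument your proof yields $\alpha^\varsigma(x)=0$ only for $\varsigma\in I$.
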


\begin{proof}
We follow the proof of Proposition \ref{typeD-unip}, taking $n=4$, $\eps=+$, and considering the character $\ppsi$ of 
$G = \GO^+_8(q)$, which is trivial at $\ZB(G)$, restricts irreducibly to $S$, and extends to a character $\tilde\ppsi$ of a $\QQ$-representation of
$\Gamma = \langle \CO(V),\phi \rangle$. The only difference is that the character $\alpha:=\ppsi_S$, a unipotent character labeled by the symbol 
$\begin{pmatrix} 2 \\2 \end{pmatrix}$, is not fixed by the triality graph automorphism of $S$, see \cite[Theorem 2.5(ii)]{Ma},
so that $I=I_A(\alpha)$ is $\Gamma/\ZB(\GL(V))$.  
Choosing $\ell = \ppd(q,4)$ (which exists by \cite{Zs}), we see that the character $\pphi_S$ of degree $q(q^2+1)^2$ 
has $\ell$-defect zero, where $\rho=1_G + \pphi+\ppsi$ for the permutation character of $G$ on the set of singular
$1$-dimensional subspaces of $V=\FF_q^8$. As shown in \cite{L2}, one of $\pphi_S$ and $\ppsi_S$ 
contains $1_S$ with multiplicity one in its restriction to $\ell'$-classes, so this must be $\ppsi_S$. 
Applying Lemma \ref{ext}(iii) to the character $\ppsi_S$ of $S \lhd A$, we
see that $\tilde\ppsi$ lies over $1_P$. 

To prove the existence of the desired $3$-element $h \in S$, we again use 
the dual pair $L \times T$ as in the proof of Proposition \ref{typeD-unip}, where $L = \Omega(V) \cong \Omega^+_{8}(q)$, $W=\FF_q^2$,  and 
$T = \Sp(W) \cong \Sp_2(q)$, and  
the identification $\ppsi_S = D_\St -1_S$ where $\St$ is the Steinberg character of $T$.  
Choose $\kappa = \pm 1$ such that $q \equiv \kappa \pmod3$, and $\varep \in \overline{\FF_q}$ of order 
$|\varep|=(q^3-\kappa)_3 > (q \pm 1)_3$, 
and take $\xi=\varep^{(q^3-\kappa)/(q-\kappa)}$ of order $|\xi|=(q-\kappa)_3 > 1$.
Then we choose $h$ to be the image in $S$ of a $3$-element $g \in L$ that belongs to
$\GL^\kappa_3(q) \times \mathsf{C}_{q-\kappa} < \SO^\kappa_{6}(q) \times \SO^\kappa_2(q)$ and has spectrum
$$\bigl\{\varep,\varep^{q},\varep^{q^{2}},\varep^{-1},\varep^{-q},\varep^{-q^{2}},\xi,\xi^{-1}\bigr\}$$
on $V$. We can choose a preimage $\hat{g}$ of $g$ in $\hat{L}=\mathrm{Spin}^+_8(q)$ to have the same order 
$(q^3-\kappa)_3=3^{e+1}$, if $(q-\kappa)_3=3^e$, and view $V$ as an $8$-dimensional representation with kernel of order $\gcd(2,q-1)$. Then any $\varsigma \in \Aut(S)$ sends $g$ to another element of the same order $3^{e+1}$ and the same centralizer order
$(q^3-\kappa)(q-\kappa)$ in $\hat{L}$, and hence its spectrum on $V$ consists of 
$6$ eigenvalues of order $3^{e+1}$, and two of order $3^{e'} >1$, which are inverses to each other. We will now show that 
$\ppsi(g)=0$ for any element $g$ with this kind of spectrum on $V$, which implies that $\alpha^\varsigma(h)=0$ for all $\varsigma \in A$.

Again, $\St(x)=0$ unless $x \in T$ is semisimple. If $x \in T$ is semisimple, then $d(xg)$ is $2$ if 
$x$ is conjugate to $\diag(\xi,\xi^{-1})$, and $0$ otherwise. 
First suppose that $\kappa=+$. Then $(\omega(xg),\St(x))=(q,1)$ for the $q(q+1)$ elements $x \in T$ that are conjugate to $\diag(\xi,\xi^{-1})$, 
$(\omega(xg),\St(x))=(1,q)$ for the two elements $x = \pm I_2$, and $(\omega(xg),\St(x))=(\pm 1, \pm 1)$ for the remaining $q(q^2-3q-2)$
semisimple elements $x \in T$. It follows that $|T|D_\St(g)$ is at most
$$q^2(q+1) +2q+q(q^2-3q-2) =2q^2(q-1) <2|T|$$
and at least
$$q^2(q+1) +2q-q(q^2-3q-2) =4q(q+1)> 0.$$
Since $D_\St(g) \in \ZZ$, we conclude that $D_\St(g)=1$, and so $\ppsi(g)=0$.

Next suppose that $\kappa=-$. Then  
$(\omega(xg),\St(x))=(1,q)$ for the two elements $x= \pm I_2$, 
$(\omega(xg),\St(x))=(1,1)$ for the $q(q+1)(q-3)/2$ elements $x \in T$ that are conjugate to $\diag(b,b^{-1})$
with $b \in \FF_q^\times \smallsetminus \{\pm 1\}$, $(\omega(xg),\St(x))=(\pm q,-1)$ for the $q(q-1)$ elements $x \in T$ that are conjugate to 
$\diag(\xi,\xi^{-1})$, and $(\omega(xg),\St(x))=(\pm 1, \pm 1)$ for the remaining $q(q-1)(q-3)/2$
semisimple elements $x \in T$. It follows that $|T|D_\St(g)$ is at most
$$2q+q(q+1)(q-3)/2 + q^2(q-1)+q(q-1)(q-3)/2 =2q(q-1)^2 <2|T|$$
and at least
$$2q+q(q+1)(q-3)/2 - q^2(q-1)-q(q-1)(q-3)/2 =-q(q-1)^2> -|T|.$$
Since $D_\St(g) \in \ZZ$, we deduce that $D_\St(g) \in \{0,1\}$, whence $\ppsi(g)\in \{-1,0\}$. Suppose now
that $\ppsi(g)=-1$. Then $\rho(g)=1+\pphi(g)+\ppsi(g)=\pphi(g)$. Note that  $\rho(g)$ is the number of $g$-fixed 
singular $1$-dimensional subspaces in $V$, which is zero since no eigenvalue of $g$ belongs to $\FF_q^\times$. 
It follows that $\pphi(g)=0$, which is a contradiction since $g$ is a $3$-element and 
$\pphi(1) = q(q^2+1)^2$ is coprime to $3$. Hence $\ppsi(g)=0$, as stated.
\end{proof}

\begin{prop}\label{slu-odd}
Let $p$ be an odd prime, $q=r^f$ be a power of a prime $r \neq p$, and let $S:=\PSL^\eps_n(q)$ with $\eps=\pm$, 
$p|\gcd(n,q-\eps)$, and $(p,n,q,\eps) \neq (3,3,2,-)$. Then Theorem {\rm\ref{simple1}} holds for $S$.
\end{prop}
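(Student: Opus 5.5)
Since $p$ divides $\gcd(n,q-\eps)$, the group $\GL^\eps_n(q)$ typically has no semisimple $p'$-element $s$ whose centralizer has $p$-part equal to $|G/S|_p$. Defect zero characters of $S$ may therefore fail to exist, so I would aim for part (i) of Theorem \ref{simple1}, following Propositions \ref{gl-unip} and \ref{gu-unip} but with $p$ in place of $2$. The candidate $\alpha$ is a unipotent character of $G=\GL^\eps_n(q)$ that is trivial on $Z=\ZB(G)$ and restricts irreducibly to $S$. Unipotent characters of $S$ are $\Aut(S)$-invariant, with the triality exception not arising in type $A$, by \cite[Theorem 2.5]{Ma}, so $I=\Aut(S)=A$. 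For $\eps=+$ I would take $\alpha=\pb_S$. For $\eps=-$ I would take $\alpha$ to be one of $\pphi_S,\ppsi_S$ from the action on singular points, or for small $n$ a suitable unipotent constituent of a permutation character of $\GU_n(q)$.

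\textbf{Extension.} The extension comes from a permutation character. Let $\Gamma=\langle G,\sigma\rangle$, and add $\tau$ in the linear case, so that $A$ acts on $\Omega_1$, or on the set of singular $1$-spaces for $\eps=-$. Lemma \ref{ext}(i), together with the multiplicity-one statements, then gives a rational extension $\tilde\alpha$ of $\alpha$ to $A$, afforded by a $\QQ$-representation, so $\tilde\alpha$ is certainly $p$-rational. If the permutation character does not extend over the graph automorphism, I would instead use Lemma \ref{index2}, as in Proposition \ref{gl-unip}(ii).

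\textbf{Lying over $1_P$.} Take $\ell$ to be a prime divisor of $(q^n-\eps^n)/(q-\eps)$ different from $p$, for instance a primitive prime divisor from \cite{Zs}. By \cite[Proposition 3.1]{GT} (respectively \cite{L1} for unitary groups), $\alpha^\circ$ contains $1_S$ with multiplicity one. The first part of Lemma \ref{ext}(iii) with $\beta=1_N$ then shows that $\tilde\alpha\lambda$ lies over $1_P$ for some linear $\lambda$ of order $\le\gcd(2,p)=1$, so $\tilde\alpha$ itself lies over $1_P$. Since $p$ is odd, no multiplicity condition is required.

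\textbf{Vanishing, the main obstacle.} I expect this step to be the hardest: one needs a $p$-element $x\in S$ with $\alpha^\varsigma(x)=\alpha(x)=0$. For $\pb$ we have $\pb(x)=\#\{\text{fixed points of } x \text{ on } \Omega_1\}-1$, so it suffices to find a $p$-element of $\SL^\eps_n(q)$ fixing exactly one point. I would mimic Proposition \ref{gl-unip}(i). Take $\varep$ of order $(q^{n-1}-\eps^{n-1})_p$, and use the spectrum $\{\varep,\varep^{\eps q},\dots,\varep^{-(\cdots)}\}$, so that the last eigenvalue is an element of $\FF_{q}^\times$ (respectively of the norm-one group), making the determinant $1$.

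This requires $(q^{n-1}-\eps^{n-1})_p>(q-\eps)_p$. That inequality fails when $p\mid n$, because then $p\nmid(n-1)$ and the $p$-parts agree by lifting the exponent. So I would instead use a block decomposition $n=a+b$ with $p$-parts $(q^a-\eps^a)_p>(q-\eps)_p$, such as $a=n-p$ and $b=p$, or $a=b=n/2$ when $n=2p$. I would then choose $\alpha$ to be whichever of $\pb,\pc,\pd$, or for $\eps=-$ whichever of $\pphi,\ppsi$, vanishes on the resulting element. This vanishing is checked through fixed-point counts on $\Omega_1$, $\Omega_2$ and partial flags, via \cite[Lemma 5.1]{GT}, or through the dual pair formulas of \cite{LBST} in the unitary case.

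Small $n$, in particular $n=p=3$, need separate treatment: there I would use an explicit character from \cite{E} or \cite{Geck}, as in Proposition \ref{gu-unip}(b), or fall back on part (ii) via a defect zero semisimple character when $(q-\eps)_3=3$. The excluded case $(3,3,2,-)$ is exactly where these tools fail.
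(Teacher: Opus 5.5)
Your plan has genuine gaps, both for $n>p$ and for $n=p$.

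\textbf{The case $n>p$.} Your lying-over argument is built for a character that can never vanish on a $p$-element. When $p\mid\gcd(n,q-\eps)$ one has $\pb(1)=q(q^{n-1}-1)/(q-1)\equiv n-1\equiv -1 \pmod p$. Likewise $\pd(1)$ (for $\eps=+$) and $\ppsi(1)$ (for $\eps=-$) have $p$-part $(n-1)_p(n-2)_p=1$. For every $p$-element $x$ we have $\chi(x)\equiv\chi(1)$ modulo a prime ideal over $p$, as in the proof of (1)$\Rightarrow$(2) of Theorem A. So no choice of spectrum helps, and the lifting-the-exponent failure you found is only a symptom of this.

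The usable characters are $\pc$ and $\pphi$, whose degrees have $p$-part $n_p(n-3)_p>1$; these are what the paper uses. It kills them with an element $g$ built from $p\times p$ blocks with no eigenvalue in $\FF_{q^2}$. Then $g$ fixes no $1$- or $2$-space, so $\pc(g)=0$; in the unitary case $d(xg)=0$ for all $x\in\GU_2(q)$, so $D_{1_T}(g)=1$ and $\pphi(g)=0$.

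For these characters your prime $\ell\mid(q^n-\eps^n)/(q-\eps)$ is the wrong one. If $\ell$ is a primitive prime divisor of $q^n-1$, then $\pc$ has $\ell$-defect zero, so $\pc^\circ$ does not contain $1_G$ and the first part of Lemma \ref{ext}(iii) does not apply. One must take $\ell\mid(q^{n-1}-1)/(q-1)$ for $\pc$; then $\pc^\circ$ contains $1_G$ once by \cite[Proposition 3.1]{GT}. For $\pphi$ one takes $\ell=\ppd(-q,n-1)$, so that $\ppsi_S$ has $\ell$-defect zero and \cite{L1} puts $1_S$ into $(\pphi_S)^\circ$.

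\textbf{The case $n=p$.} This case occurs for every odd $p$, not just $p=3$, and here part (i) via unipotent characters is out of reach.
\begin{enumerate}
\item For $p=3$, all unipotent characters of $\GL^\eps_3(q)$ have $3'$-degree, namely $1$, $q(q+\eps)$ and $q^3$.
\item For $\eps=+$ and any $p$, every $p$-element of $S$ lifts to an element of $\SL_p(q)$ that is diagonalizable over $\FF_q$. Indeed, its eigenvalues have $p$-power degree over $\FF_q$, and a single Frobenius orbit of length $p$ would have norm of order $(q-1)_p\neq 1$. At such elements each unipotent character takes the positive value given by the degree of its Harish-Chandra restriction to the centralizer.
\end{enumerate}

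The paper instead proves part (ii). Take $s\in\SL^\eps_p(q)$ of order $\ppd(\eps q,p)$. Then $\alpha=(\chi_s)_S$ has $p$-defect zero for every such $q$, since $((q^p-\eps^p)/(q-\eps))_p=p$; so your restriction to $(q-\eps)_3=3$ is unnecessary. Also $I_A(\alpha)=A\cap H$, by the ppd argument against field automorphisms.

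The substantial step missing from your plan is showing that the $p$-rational character $\chi_s$ of $H=\PGL^\eps_p(q)$ lies over $1_P$ for $P\in\Syl_p(H)$. Note that $\chi_s$ is not of $p$-defect zero in $H$, and $Q=P\cap S$ has index $p$ in $P$. Lemma \ref{Lgab}(iii) has no analogue for odd $p$ (see the remark after it). So the paper proves $\sum_{x\in P\smallsetminus Q}|\chi_s(x)|<\chi_s(1)$, using:
\begin{enumerate}
\item $\chi_s(x)=0$ for $1\neq x\in Q$;
\item for $p\geq 5$, the bound $|\chi_s(y)|\le\sqrt{q^p+1}$ on the relevant elements, since $\chi_s$ is Lusztig-induced from a Coxeter torus;
\item for $p=3$, explicit character tables, together with \cite{GAP} for $\PSL_3(4)$ and $\PSU_3(5)$.
\end{enumerate}
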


\begin{proof}
(a) We first show that if in addition $n > p$ (and so $n \geq 2p \geq 6$), 
then $A=\Aut(S)$ admits a character $\chi$ afforded by a $\QQ$-representation, such that $\chi_S \in \Irr(S)$, 
$\chi_P$ contain $1_P$ for $P \in \Syl_p(A)$, and $\chi(h)=0$ for some $p$-element $h \in S$; in particular, 
Theorem \ref{simple1}(i) holds for $S$.

Assume first that $\eps=+$. Then by Proposition \ref{gl-unip}(ii), $\alpha:=\pc_S$ extends to a character $\tilde\pc$ of a $\QQ$-representation of 
$A$. For any prime divisor $\ell$ of $(q^{n-1}-1)/(q-1)$, using \cite[Proposition 3.1]{GT} we see that $1_G$ is a constituent of multiplicity one 
in the restriction $\pc^\circ$ of $\pc$ to $\ell'$-elements of $G := \GL_n(q)$. Applying Lemma \ref{ext}(iii) to the normal subgroup $G/\ZB(G)$ of
$A$, we deduce that $\chi:=\tilde\pc$ lies over $1_P$. Next, choose $\varep \in \overline{\FF_q}^\times$ of order
$(q^p-1)_p > (q^2-1)_p$. Writing $n=mp$ with $m \in \ZZ_{\geq 2}$, we take $g_+ \in \SL_p(q)$ to be conjugate to
$$\diag\bigl(\varep,\varep^q, \ldots,\varep^{q^{p-1}}\bigr)$$
over $\overline{\FF_q}$, and choose $h$ to be the image in $S$ of a $p$-element 
$g \in \SL_n(q)$, where 
$$g = \diag\bigl(g_+,g_+, \ldots,g_+,g_+^{1-m}\bigr)$$
if $p \nmid (m-1)$, and 
$$g = \diag\bigl(g_+,g_+, \ldots,g_+,g_+^2,g_+^{-m}\bigr)$$
if $p|(m-1)$. This ensures that no eigenvalue of $g$ belongs to $\FF_{q^2}$, and hence $g$ fixes no subspace of dimension $1$ or $2$ of
the natural $\FF_qG$-module $\FF_q^n$. In the notation of Proposition \ref{gl-unip}, this implies that 
$(1_G+\pb)(g)=0=(1_G+\pb+\pc)(g)$, and thus $\pc(g)=0$, as desired. 

Assume now that $\eps=-$. Then by Proposition \ref{gu-unip} and its proof, 
$G:=\GU_n(q)$ has unipotent characters $\pphi$ and $\ppsi$ of degree 
$$\frac{(q^n-(-1)^n)(q^{n-1}+(-1)^nq^2)}{(q+1)(q^2-1)}, ~~\frac{(q^n+(-1)^nq)(q^n-(-1)^nq^2)}{(q+1)(q^2-1)},$$
respectively, both trivial at $\ZB(G)$, irreducible over $S$, and extend to characters $\tilde\pphi$ and $\tilde\ppsi$ of $\QQ$-representations of 
$A$. Choosing $\ell = \ppd(-q,n-1)$ which exists by \cite{Zs}, we see that $\ppsi_S$ has $\ell$-defect $0$. It then follows from \cite{L1} that $1_S$ is a constituent of multiplicity one 
in the restriction $(\pphi_S)^{\circ}$ of $\pphi_S$ to $\ell'$-elements of $S$. Applying Lemma \ref{ext}(iii) to the normal subgroup $S$ of
$A$, we deduce that $\chi:=\tilde\pphi$ lies over $1_P$. Next, choose $\varep \in \overline{\FF_q}^\times$ of order
$(q^p+1)_p > (q^2-1)_p$. Again writing $n=mp$ with $m \in \ZZ_{\geq 2}$, we take $g_- \in \SU_p(q)$ to be conjugate to
$$\diag\bigl(\varep,\varep^{-q}, \ldots,\varep^{(-q)^{p-1}}\bigr)$$
over $\overline{\FF_q}$, and choose $h$ to be the image in $S$ of a $p$-element 
$g \in \SU_n(q)$, where 
$$g = \diag\bigl(g_-,g_-, \ldots,g_-,g_-^{1-m}\bigr)$$
if $p \nmid (m-1)$, and 
$$g = \diag\bigl(g_-,g_-, \ldots,g_-,g_-^2,g_-^{-m}\bigr)$$
if $p|(m-1)$. This ensures that no eigenvalue of $g$ belongs to $\FF_{q^2}$, and so $d(xg)=0$ for any element 
$x \in T:= \GU_2(q)$, in the notation of the proof of Proposition \ref{gu-unip}. We also have $\pphi=D_{1_T}-1_G$, and 
arguing as in part (a1) of the proof of Proposition \ref{gu-unip} we obtain
$D_{1_T}(g)=1$, whence $\pphi(g)=0$, as desired. 

\smallskip
(b) We now show that
Theorem \ref{simple1}(ii) holds for $S$ when $n=p$. We will view $S$ as $[H,H]$, where $H=\GC^F \cong \PGL^\eps_p(q)$ 
for a simple algebraic group $\GC = \PSL_p$ of adjoint type defined over a field of characteristic $r$
and a Steinberg endomorphism $F:\GC \to \GC$. Let the pair $(\GC^*,F^*)$ be dual to $(\GC,F)$ (so that
$\GC^* \cong \SL_p$), and let $L:=(\GC^*)^{F^*} \cong \SL^\eps_p(q)$, whence $|\ZB(L)|=p=|H/S|$. 
Choosing $\ell = \ppd(\eps q,p)$ (which exists by 
\cite{Zs} since we assume $(n,q,\eps) \neq (3,2,-)$), we can find a
semisimple element $s \in L$ of order $\ell$ with centralizer of order $(q^p-\eps^p)/(q-\eps)$, 
and consider the corresponding semisimple character $\chi_s$ of $H$. 
Since $|s|=\ell >p$, $\chi_s$ takes values only in $\QQ(\exp(2\pi i/\ell))$, and hence it is $p$-rational.
Moreover, $\alpha=(\chi_s)_S$ is irreducible by \cite[Proposition 4.3(iii)]{MT1}. Now 
$$\alpha(1)=[L:\CB_L(s)]_{r'} = \frac{|H|_{r'}}{(q^p-\eps^p)/(q-\eps)} = \frac{p|S|_{r'}}{(q^p-\eps^p)/(q-\eps)},$$
whence $\al$ has $p$-defect $0$.

Now let $S \lhd A \leq \Aut(S)$ such that $A/S$ is a $p$-group. As $p > 2$, we may assume that $A \leq H \rtimes \mathsf{C}_f$.
Note $\alpha = (\chi_s)_S$ is certainly $H$-invariant. 
Suppose that some nontrivial field automorphism of
$S$ in the complement $\mathsf{C}_f$, say of order $e>1$ and induced by a bijective morphism
$\sigma$ of $\GC$, fixes $\alpha$. By \cite[Proposition 4.3]{MT1} (see also \cite[Proposition 5.1]{T}), $|s|$ divides $|(\GC^*)^{\sigma^*}|$
for the corresponding bijective morphism $\sigma^*$ of $\GC^*$. But $|(\GC^*)^{\sigma^*}|= |\SL^\eps_p(q^{1/e})|$ is not divisible by 
$\ell$ by the choice of $\ell$, a contradiction. It follows that $I_A(\alpha)=A \cap H$; in particular $\alpha$ extends to the $p$-rational character
$(\chi_s)_{A \cap H}$.

It therefore remains to show that $\chi=\chi_s$ lies over $1_P$ for $P \in \Syl_p(H)$. Note that $Q:= P \cap S \in \Syl_p(S)$ has index 
$p$ in $P$, and $\chi(x)=0$ for $1 \neq x \in Q$. Write $(q-\eps)_p=p^a$ with $a \in \ZZ_{\geq 1}$. 
Suppose first that $p=3$. Using the character table of $\GL^\eps_3(q)$, we can check that $|\chi(x)| \leq 3$ when $x \in P \smallsetminus Q$. Since
$|P|=3^{2a+1}$ and $\chi(1)=(q-\eps)^2(q+\eps) \geq 7 \cdot 3^{2a}$ when $q \geq 7$, it follows that
$$|P|[\chi_P,1_P] = \sum_{x \in P}\chi(x) \geq \chi(1)-\sum_{x \in P \smallsetminus Q}|\chi(x)| \geq 7 \cdot 3^{2a}-3 \cdot 2 \cdot 3^{2a} > 0,$$
i.e. $\chi$ lies over $1_P$. The same can be checked using \cite{GAP} for $S = \PSL_3(4)$ and $\PSU_3(5)$.  

Now we consider the case $p \geq 5$, whence $q \geq 4$. 
We can view $\chi$ as a character of $G=\GL^\eps_p(q)$ which is Lusztig induced from a maximal torus 
$T_1 \cong \mathsf{C}_{q^p-\eps}$. Hence, if $y \in G$ is semisimple and $\chi(y) \neq 0$, it follows from \cite[Proposition 7/5/3]{C} that $y$ is 
conjugate to an element in $T_1$. In particular, $y$ has spectrum $\{\gamma,\gamma^{\eps q}, \ldots,\gamma^{(\eps q)^{p-1}}\}$ on
$\overline{\FF_q}^n$ for 
some $\gamma \in \overline{\FF_q}^\times$ with $\gamma^{q^p-\eps}=1$. Observe that either these $p$ eigenvalues are pairwise distinct, in 
which case $\CB_G(y) \cong T_1$,  or $\gamma^{q-\eps}=1$ and $y \in \ZB(G)$. Assume now that $y$ is a preimage of $x\in P \smallsetminus Q$. Since $\CB_H(S)=1$, we see that $y \notin \ZB(G)$, and so $|\chi(y)| \leq (q^p-\eps)^{1/2} \leq (q^p+1)^{1/2}$ by the preceding observation.  
Now we have $\chi(1) = \prod^{p-1}_{i=1}(q^i-\eps^i) > (1/2)q^{p(p-1)/2}$ and $|P \smallsetminus Q| = (p-1)p^{(p-1)a} \leq q(q+1)^{p-1}$. It follows
that
$$|P|[\chi_P,1_P] \geq \chi(1)-\sum_{x \in P \smallsetminus Q}|\chi(x)| \geq (1/2)q^{p(p-1)/2}-q(q+1)^{p-1}\sqrt{q^p+1} > 0$$
(since $p \geq 5$ and $q \geq 4$),
i.e. $\chi$ lies over $1_P$ in this case as well. 
\end{proof}

\begin{lem}\label{e63}
Let $q=r^f$ be a power of a prime $r \neq 3$, and let $S$ be the simple group of type $E_6^\eps(q)$ with $\eps=\pm$ with
$p=3|(q-\eps)$. Then Theorem {\rm \ref{simple1}(ii)} holds for $S$ when $p=3$.
\end{lem}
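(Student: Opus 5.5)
We will imitate part (b) of the proof of Proposition \ref{slu-odd}. View $S=[H,H]$, where $H=\GC^F$ with $\GC$ simple of adjoint type $E_6$ and $F$ such that $H$ is of type $E_6^\eps(q)$. Then $|H/S|=\gcd(3,q-\eps)=3$. Let $L=(\GC^*)^{F^*}$ be the dual group, which is simply connected of type $E_6^\eps(q)$ with $|\ZB(L)|=3$. Take $\ell=\ppd(r,9f)$ if $\eps=+$ and $\ell=\ppd(r,18f)$ if $\eps=-$; these exist by \cite{Zs}. By \cite[Lemma 2.3]{MT} there is a semisimple $s\in L$ of order $\ell$ with $\CB_L(s)$ a maximal torus of order $\Phi_9(q)$, respectively $\Phi_{18}(q)$.

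\textbf{Defect zero and $3$-rationality.} Let $\chi=\chi_s\in\Irr(H)$ be the corresponding semisimple character. Its values lie in $\QQ(\exp(2\pi i/\ell))$ with $\ell>3$, so $\chi$ is $3$-rational. By \cite[Proposition 4.3(iii)]{MT1}, $\alpha:=\chi_S$ is irreducible. Now $\alpha(1)=[L:\CB_L(s)]_{r'}$. Since $q\equiv\eps\pmod 3$, we have $3\nmid\Phi_9(\eps q)$, and $|L|_3=|H|_3=3|S|_3$, so $\alpha(1)_3=3|S|_3/3=|S|_3$. Hence $\alpha$ has $3$-defect zero.

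\textbf{Inertia group.} Let $S\lhd A\le\Aut(S)$ with $A/S$ a $3$-group. Since $p=3$ is odd, $A\le H\rtimes\mathsf{C}_f$; the graph automorphism has order $2$ and does not occur. Clearly $\alpha$ is $H$-invariant. If a field automorphism of order $e>1$ fixed $\alpha$, then by \cite[Proposition 4.3]{MT1} (see also \cite[Proposition 5.1]{T}) $\ell$ would divide $|E_6^\eps(q^{1/e})|$. This is impossible by the choice of $\ell$ as a primitive prime divisor. So $I_A(\alpha)=A\cap H$, and $\alpha$ extends to the $3$-rational character $\chi_{A\cap H}$.

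\textbf{Lying over $1_P$.} It remains to show that $[\chi_P,1_P]>0$ for $P\in\Syl_3(H)$, where $Q=P\cap S$ has index $3$ in $P$. This is the main obstacle. Since $\alpha$ has $3$-defect zero, $\chi$ vanishes on $Q\smallsetminus\{1\}$, so
$$|P|[\chi_P,1_P]\ \ge\ \chi(1)-\sum_{x\in P\smallsetminus Q}|\chi(x)|.$$
To bound the terms on the right:
\begin{itemize}
\item $\chi$ is $\pm R_{T_1}^{\GC}(\hat s)$ for the torus $T_1$ of type $\Phi_9$ (resp.\ $\Phi_{18}$), which is a regular character.
\item By \cite[Proposition 7.5.3]{C}, a semisimple $y$ with $\chi(y)\neq0$ is conjugate into $T_1$.
\item Elements of $T_1$ are either regular, with centralizer $T_1$, or central. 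Central elements are trivial in adjoint $H$.
\item A $3$-element of $P\smallsetminus Q$ is semisimple and nontrivial, so by the character formula $|\chi(x)|\le|W(T_1)|=9$.
\end{itemize}
Then
$$\chi(1)\ \ge\ q^{36}\cdot\tfrac{1}{2}\ >\ 9|P|\ \ge\ \sum_{x\in P\smallsetminus Q}|\chi(x)|,$$
which gives the claim. The key point to verify is the centralizer dichotomy in $T_1$. If it fails for small $q$ (e.g.\ $q=2$ with $\eps=-$, or $q=4$), we fall back on the crude bound $|\chi(x)|\le|\CB_H(x)|^{1/2}$ together with $|P|=3^{1+6a}\cdot(\text{small})$, or on a direct check with \cite{GAP} or known character tables.
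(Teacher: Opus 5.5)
Your construction is the paper's: the same semisimple character $\chi=\chi_s$ with $|\CB_L(s)|=\Phi_9(q)$, resp.\ $\Phi_{18}(q)$, the same field-automorphism argument giving $I_A(\alpha)=A\cap H$, and the same final estimate $|P|[\chi_P,1_P]\ge\chi(1)-\sum_{x\in P\smallsetminus Q}|\chi(x)|$. There is one slip in the defect computation: $3\nmid\Phi_9(\eps q)$ is false. Since $\eps q\equiv 1\pmod 3$, in fact $3\,\|\,\Phi_9(\eps q)$, and this is forced because $\ZB(L)\le\CB_L(s)$. Your formula $\alpha(1)_3=3|S|_3/3$ actually uses $|\CB_L(s)|_3=3$, so the conclusion stands, but the reason you state contradicts it.

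The genuine gap is the step you single out yourself. The dichotomy ``regular or central'' for elements of $T_1\le H$ is false for every $q$, and it fails exactly where it matters.
\begin{itemize}
\item We have $|T_1|_3=3$, and the $3$-part of the corresponding torus of the simply connected group is central. Hence the elements $t_1^{\pm1}$ of order $3$ in $T_1$ lie in $H\smallsetminus S$. By the vanishing result you quote, their conjugates are the only non-trivial $3$-elements of $H$ at which $\chi$ can be non-zero.
\item The $\Phi_9$- (resp.\ $\Phi_{18}$-) torus is a Coxeter torus of an $F$-stable subsystem subgroup $\mathbf M$ of type $3A_2$. This is the familiar $\SL_3(q^3)$, resp.\ $\SU_3(q^3)$, inside $E_6^\eps(q)$.
\item In the adjoint group, $\ZB(\mathbf M)$ is dual to the quotient of the $E_6$ root lattice by the $3A_2$ root lattice, which has order $3$. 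It is $F$-fixed: $F$ acts on $X(\mathbf T)$ as $\eps q\,w$ with $w$ of odd order $9$, hence trivially on that quotient $\cong\ZZ/3$.
\item So $\ZB(\mathbf M)=\langle t_1\rangle$, and $\CB^\circ_{\GC}(t_1)=\mathbf M$ is not a torus. The eigenvalue argument that gives the dichotomy for $\GL^\eps_p(q)$ in Proposition~\ref{slu-odd} has no analogue here.
\item The Deligne--Lusztig character formula at $t_1$, with $\theta(t_1)=1$ because $\theta$ has order $\ell$, gives $|\chi(t_1)|\ge|\mathbf M^F|_{r'}/|T_1|=(q^6-1)(q^3-\eps)$.
\end{itemize}
Hence both $|\chi(x)|\le 9$ and $\sum_{x\in P\smallsetminus Q}|\chi(x)|\le 9|P|$ are wrong. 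Your fallback is reserved for small $q$ and never supplies the centralizer bound it would need, so the last step is unproved as written.

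The step is repairable in either of two ways. You can argue as the paper does: use $|\chi(x)|\le|\CB_H(x)|^{1/2}$, together with $|\CB_H(x)|<q^{45}(q+1)$ for $x\in P\smallsetminus Q$ (from L\"ubeck's data) and $|P|\le 3^4(q+1)^6$. This works for $q\ge 4$, and $\tw2E_6(2)$ is handled by an Atlas check. Alternatively, complete your own computation. Using $|\CB_H(t_1):\mathbf M^F|\le 3$, and a factor $2$ for $t_1^{-1}$, the same formula gives $|\chi(t_1)|\le 6(q^6-1)(q^3-\eps)<12q^9$. Since $12q^9\cdot 3^4(q+1)^6<q^{36}/2$ for all $q\ge 2$, this route even covers $\tw2E_6(2)$.
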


\begin{proof}
We will view $S$ as $[H,H]$, where $H=\GC^F=E_6^\eps(q)_{\mathrm{ad}}$ 
for a simple algebraic group $\GC = E_6$ of adjoint type defined over a field of characteristic $r$
and a Steinberg endomorphism $F:\GC \to \GC$. Let the pair $(\GC^*,F^*)$ be dual to $(\GC,F)$, and let 
$L:=(\GC^*)^{F^*} \cong E_6^\eps(q)_{\mathrm{sc}}$, whence $|\ZB(L)|=3=|H/S|$. 
By \cite[Lemma 2.3]{MT} we can find a
semisimple element $s \in L$, of order $\ell=\ppd(q,9)$ with centralizer of order $\Phi_9(q)$ when $\eps=+$, and 
of order $\ell=\ppd(q,18)$ with centralizer of order $\Phi_{18}(q)$ when $\eps=-$. Note that in both cases 
$|\CB_L(s)|_3=3$.
Then the corresponding semisimple character $\chi=\chi_s$ of $H$ takes values only in $\QQ(\exp(2\pi i/\ell))$, and hence it is $3$-rational.
Moreover, $\alpha:=\chi_S$ is irreducible by \cite[Proposition 4.3(iii)]{MT1}, and 
$$\alpha(1)=[L:\CB_L(s)]_{r'} = \frac{|H|_{r'}}{|\CB_L(s)|} = \frac{3|S|_{r'}}{|\CB_H(s)|},$$
whence $\al$ has $3$-defect $0$.

Now let $S \lhd A \leq \Aut(S)$ such that $A/S$ is a $3$-group. Then $A \leq H \rtimes \mathsf{C}_f$.
Note $\alpha = \chi_S$ is certainly $H$-invariant. 
Suppose that some nontrivial field automorphism of
$S$ in the complement $\mathsf{C}_f$, say of order $e>1$ and induced by a bijective morphism
$\sigma$ of $\GC$, fixes $\alpha$. By \cite[Proposition 4.3]{MT1}, $|s|$ divides $|(\GC^*)^{\sigma^*}|$
for the corresponding bijective morphism $\sigma^*$ of $\GC^*$. But $|(\GC^*)^{\sigma^*}|= |E_6^\eps(q^{1/e})_{\mathrm{sc}}|$ is not divisible by 
$\ell$ by the choice of $\ell$, a contradiction. It follows that $I_A(\alpha)=A \cap H$; in particular $\alpha$ extends to the $3$-rational character
$\chi_{A \cap H}$.

It therefore remains to show that $\chi$ lies over $1_P$ for $P \in \Syl_3(H)$. Note that $Q:= P \cap S $ has index 
$3$ in $P$, and $\chi(x)=0$ for $1 \neq x \in Q$. Write $(q-\eps)_3=3^a$ with $a \in \ZZ_{\geq 1}$. 
Suppose first that $S=\tw2E_6(2)$, so $\eps=-$, $|P|=3^{10}$ and $\chi(1)=58615481925$. Using \cite{atlas} we can check that 
$|\chi(x)| \leq 40005$ when $x \in P \smallsetminus Q$. It follows that
$$|P|[\chi_P,1_P] = \sum_{x \in P}\chi(x) \geq  \chi(1)-\sum_{x \in P \smallsetminus Q}|\chi(x)| \geq 58615481925-2 \cdot 3^9 \cdot 40005> 0,$$
i.e. $\chi$ lies over $1_P$. 

Now we may assume $q \geq 4$. Then $|P| \leq 3^4 \cdot (q+1)^6$, and using \cite{Lu} we can check that 
$|\CB_H(x)| < q^{45}(q+1)$ for all $x \in P \smallsetminus Q$, whereas $\chi(1) > (1/2)q^{36}$. It follows
that
$$|P|[\chi_P,1_P] \geq \chi(1)-\sum_{x \in P \smallsetminus Q}|\chi(x)| \geq (1/2)q^{36}-54 \cdot (q+1)^6\sqrt{q^{45}(q+1)} > 0,$$
i.e. $\chi$ lies over $1_P$ in this case as well. 
\end{proof}

Together with Theorem \ref{simple2a}, the following result completes the proof of Theorem \ref{simple1}:

\begin{thm}\label{simple-odd}
Theorem {\rm \ref{simple1}} holds for all finite non-abelian simple groups $S$ when $p>2$.
\end{thm}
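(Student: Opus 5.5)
We reduce at once to the generic situation. By Lemma \ref{small} and Proposition \ref{almost-simple-others} we may assume $S$ is a simple group of Lie type in characteristic $r$ and not one of the groups listed in Lemma \ref{small}(a). The basic observation is the one already used in the proof of Lemma \ref{small}. Suppose $|\mathrm{Out}(S)|$ is coprime to $p$, or more generally every $p$-subgroup of $\mathrm{Out}(S)$ fixes some $p$-defect zero character of $S$. Then any $p$-defect zero $\alpha\in\Irr(S)$ has $[I:S]$ a $p$-power in (ii). Since $p$ is odd, Lemma \ref{Lgab}(i),(ii) applied to $(G,N)=(I,S)$ with $m=[\alpha_Q,1_Q]$ gives the required $p$-rational extension. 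Here $m$ is coprime to $p$ by Lemma \ref{Ldef0}, and one has $m=1$, or else one uses the $\Gamma$-orbit argument of Lemma \ref{Lgab}(iii) with $\Gamma$ now a $p$-group. So the whole task is to produce, for each $S$ and odd $p$, a $p$-defect zero character whose $A$-stabiliser behaves well. Otherwise we fall back on part (i), using the explicit constructions already made.

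First, the defining characteristic $p=r$: the Steinberg character $\St$ is rational, of $p$-defect zero and $\Aut(S)$-invariant, and extends to $\Aut(S)$. Its restriction to $P$ contains $1_P$ by Lemma \ref{Lgab}(ii) (or directly, since $[\St_U,1_U]=1$ for $U$ the unipotent radical). This gives (ii).

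Next, $p\ne r$. We mimic the proof of Theorem \ref{simple2a}. Write $S=[H,H]$ with $H=\GC^F$ of adjoint type and $L=(\GC^*)^{F^*}$. We look for a semisimple $s\in L$ of order a primitive prime divisor $\ell=\ppd(q,m)$, which exists by \cite{Zs} and \cite[Lemma 2.3]{MT}, with $|\CB_L(s)|_p=|H/S|_p$. Then $\chi_s$ is $p$-rational, since $\ell\ne p$, and $(\chi_s)_S$ is irreducible by \cite[Proposition 4.3(iii)]{MT1} and of $p$-defect zero. As in Proposition \ref{slu-odd}(b), choosing $\ell$ primitive for the full field makes $\alpha$ fixed by no nontrivial field automorphism, by \cite[Proposition 4.3]{MT1}. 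Graph automorphisms of order $3$ occur only for $D_4$. Hence $I_A(\alpha)\le H\rtimes(\text{field})$ is controlled. When $p\nmid|H/S|$ this is straightforward: take a Coxeter-type torus (classical groups: $\ppd(q,2n)$ or $\ppd(q,n)$ tori; exceptional groups: $\Phi_m$ as in Theorem \ref{simple2a}) whose order is prime to $p$. One of the two candidate tori used for $p=2$ always works, because $\gcd$ of their orders divides $|H/S|$. Field automorphisms of $p$-power order not fixing $\alpha$ are harmless: $I=A\cap H$, and $\chi_s$ restricted there is $p$-rational and lies over $1_P$ by Lemma \ref{Ldef0} when $p\nmid[H:S]$.

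The remaining cases have $p\mid|H/S|$ with $p$ odd, namely $\PSL^\eps_n(q)$ with $p\mid\gcd(n,q-\eps)$, $E_6^\eps(q)$ with $p=3\mid q-\eps$, and $\PO^+_8(q)$ with $p=3$ (triality). These are exactly Proposition \ref{slu-odd}, Lemma \ref{e63} and Lemma \ref{typeD4-unip}, giving (ii), (ii) and (i) respectively. For $D_4$ with $3\mid q$, which is the defining characteristic, the Steinberg argument applies; for $3\nmid q$ we use Lemma \ref{typeD4-unip}. The excluded $\PSU_3(2)$ is solvable.

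The main obstacle is the uniform choice, for every remaining classical and exceptional type and every odd $p\ne r$, of a torus $\CB_L(s)$ whose order has $p$-part exactly $|H/S|_p$ (usually $1$). One must also handle the small exceptions where $\ppd$'s fail (e.g.\ $q=2$, $m=6$, and the Ree/Suzuki types), and check them via \cite{GAP} or \cite{atlas} if they are not already in Lemma \ref{small}. I would treat this type by type using the standard list of maximal tori, taking two tori with coprime orders up to $|H/S|$, as in Theorem \ref{simple2a}.
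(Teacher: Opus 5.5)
Your outline follows the paper's route. You use the Steinberg character with Lemma~\ref{Lgab}(ii) when $r=p$. You hand off to Proposition~\ref{slu-odd}, Lemma~\ref{e63} and Lemma~\ref{typeD4-unip} when $p\mid|H/S|$, or when $S=\PO^+_8(q)$ and $p=3$. Otherwise you take a semisimple character $\chi_s$ with $p\nmid|\CB_L(s)|$, where $s$ is chosen so that no field automorphism of order $p$ fixes $\alpha=(\chi_s)_S$; then $I_A(\alpha)=A\cap H=S$ and Lemma~\ref{Ldef0} finishes.

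\textbf{The gap.} The step that carries all the weight is never done. You call the existence of a suitable $s$ ``the main obstacle'' and stop. The paper's parts (a)--(c) consist precisely of constructing, type by type, elements $s_1,s_2\in L$ of order prime to $|\ZB(L)|$ such that $\gcd(|\CB_L(s_1)|,|\CB_L(s_2)|)_{2'}=|\ZB(L)|_{2'}$, and such that, when $p\mid f$, neither $s_i$ is conjugate into $\GC^*(\FF_{q^{1/p}})$. The shortcut you offer for this does not deliver it, for three reasons.
\begin{enumerate}
\item The proof of Theorem~\ref{simple2a} uses only one torus for $\PSL^\eps_3(q)$, $G_2$, $\tw3D_4$, $F_4$, $E_8$ and $\tw2G_2$, and that torus can have order divisible by $p$. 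For $\PSL_3(5)$ it has order $q^2+q+1=31$, so it is useless for $p=31$.
\item That proof works only in odd characteristic, so it provides nothing for $\tw2B_2(q)$ or $\tw2F_4(q)$.
\item A prime-order $\ppd$ element need not have centralizer equal to the torus you have in mind. For example, in $\Sp_{2n}(q)$ with $n$ even, an element of order $\ppd(r,nf)$ in $\mathsf{C}_{q^n-1}$ has every eigenvalue of multiplicity $2$. So $\CB_L(s)$ must be determined, not just a torus containing $s$.
\end{enumerate}
Genuinely new choices are therefore needed. Examples are $q^2-1$ for $\PSL^\eps_3$, $\Phi_{24}$ alongside $\Phi_{30}$ for $E_8$, and $q\pm\sqrt{2q}+1$ versus $q-1$ for the Suzuki groups. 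One also needs elements of composite order such as $(q^n-1)_{2'}$, together with a check that a suitable Zsigmondy prime divides them so that \eqref{for-p4} holds. Without these choices the theorem is not proved.

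\textbf{The opening ``basic observation''.} This is false for odd $p$ and should be dropped rather than kept as a fallback. Lemma~\ref{Lgab}(i) does give a $p$-rational extension, but that extension is guaranteed to lie over $1_P$ only when $m=1$ (part (ii)). The orbit argument of Lemma~\ref{Lgab}(iii) does not transfer, because $\mathrm{Gal}(\QQ(e^{2\pi i/|G|})/\QQ(e^{2\pi i/|G|_{p'}}))\cong(\ZZ/|G|_p\ZZ)^\times$ is not a $p$-group for $p>2$. It permutes the linear characters of order $p$ in orbits of length $p-1$, so a Galois-fixed $\Psi$ of $p'$-degree can be a multiple of their sum and contain no $1_{P/Q}$. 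The Fermat-prime example right after Lemma~\ref{Lgab} is exactly such a failure. Your main line never invokes the observation, since there $I_A(\alpha)=S$, so the argument does not break. But you have no general tool when $\alpha$ has a nontrivial $p$-power stabiliser, which is why the paper insists on \eqref{for-p2}.

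\textbf{Minor.} Proposition~\ref{slu-odd} gives (i) when $n>p$, and gives (ii) only when $n=p$.
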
 

\begin{proof}
By Proposition \ref{almost-simple-others} we need to consider only simple groups $S$ of Lie type in some characteristic $r$.
We may assume that $S$ is not isomorphic to any of the simple groups listed in Lemma \ref{small}.

First suppose that $r=p$. Then the Steinberg character $\St$ of $S$ has $p$-defect zero and of degree $|Q|$ for $Q \in \Syl_p(S)$. It follows
that $[\St_Q,1_Q]=1$. Also, by the main result of \cite{Feit}, $\St$ extends to a rational-valued character of $\Aut(S)$. It follows from Lemma \ref{Lgab}(ii) that Theorem \ref{simple1}(ii) holds for $S$.

We will now assume that $r \neq p$ and view $S$ as $[H,H]$, where $H=\GC^F$ 
for a simple algebraic group $\GC$ of adjoint type defined over a field of characteristic $r$
and a Steinberg endomorphism $F:\GC \to \GC$. Let the pair $(\GC^*,F^*)$ be dual to $(\GC,F)$, and let 
$L:=(\GC^*)^{F^*}$. If $S \cong \PSL^\eps_n(q)$ with $p|\gcd(n,q-\eps)$ or $E_6^\eps(q)$ with $p=3|(q-\eps)$, then 
Theorem \ref{simple1} holds for $S$ by Proposition \ref{slu-odd}, respectively Lemma \ref{e63}. 
So we may further assume that
\begin{equation}\label{for-p1}
  p \mbox{ does not divide }|H/S|=|\ZB(L)|.
\end{equation}

Now let $S \lhd A \leq \Aut(S)$ such that $A/S$ is a $p$-group. Since $p > 2$, Lemma \ref{typeD4-unip} and \eqref{for-p1} allow us to assume that 
$$A \leq \Gamma:=H \rtimes \mathsf{C}_f$$
if $H$ is defined over $\FF_q$ with $q=r^f$, cf. \cite[Theorem 2.5.12]{GLS}. We now show that $H$ admits a $p$-rational character $\chi$ such that 
$\alpha:=\chi_S$ is irreducible of $p$-defect zero and 
\begin{equation}\label{for-p2}
  I_{\Gamma}(\alpha) = H. 
\end{equation}  
It then follows that $I_A(\alpha)=A \cap H$. Since $p \nmid |H/S|$ by \eqref{for-p1}, Theorem \ref{simple1}(ii) holds for $S$.  
We also note that, since $AH/H \cong A/(A \cap H)$ is a $p$-group, one needs to check 
\eqref{for-p2} only when $p|f$, in which case one needs to show that if $\sigma$ is a nontrivial automorphism of $S$ of order $p$ which lies in the complement $\mathsf{C}_f$ to $H$ in $\Gamma$ consisting of field automorphisms, then $\sigma$ does not fix $\alpha$. We may assume that
$\sigma$ is induced by a Steinberg endomorphism, also denoted by $\sigma$, of $\GC$ so that $\sigma^p=F$. In this case, for the
corresponding morphism $\sigma^*$ of $\GC^*$ the subgroup $(\GC^*)^{\sigma^*}$ will be of the same type as of $L$, but defined over 
$\FF_{q^{1/p}}$, and for this reason we denote $(\GC^*)^{\sigma^*}$ by $\GC^*(\FF_{q^{1/p}})$.

\smallskip
(a) Suppose first that $S = \PSL_2(q) \cong \PSU_2(q)$ with $q \geq 7$. Then we view $H$ as $G/\ZB(G)$, where $G = \GU_2(q)$. Consider the case
$p|(q-1)$. Then the character $\chi^{(1,q)}_{q-1}$ of degree $q-1$ 
(in the notation of \cite[p. 28]{E}) is $p$-rational and trivial at $\ZB(G)$, and can be viewed as a
character $\chi$ of $H$ which is also irreducible over $S$ with $p$-defect $0$. 
Using $e \geq p \geq 3$ one can check directly that $\sigma$ does not fix $\alpha$. Thus \eqref{for-p2} holds, and we are done.
Next assume that
$p|(q+1)$. Then the character $\chi^{(q+1)}_{q+1}$ of degree $q+1$ 
(in the notation of \cite[p. 28]{E}) is $p$-rational and trivial at $\ZB(G)$, and can be viewed as a
character $\chi$ of $H$ which is also irreducible over $S$ with $p$-defect $0$. As before, we can check that \eqref{for-p2} holds, and so we are done
in this case as well.

\smallskip
(b) In the remaining cases, we aim to find regular semisimple elements $s_{1,2} \in L$ of order coprime to $|\ZB(L)|$ such that 
\begin{equation}\label{for-p3}
  \gcd(|\CB_L(s_1)|,|\CB_L(s_2)|)_{2'}=|\ZB(L)|_{2'},
\end{equation}
and consider the corresponding semisimple characters $\chi_i=\chi_{s_i}$, $i = 1,2$. By \cite[Proposition 4.3(iii)]{MT1},
$\alpha_i:=(\chi_i)_S$ is irreducible. Next, as $p$ is odd, \eqref{for-p1} and \eqref{for-p3} imply that there exists some $j \in \{1,2\}$ such that
$p \nmid |\CB_L(s_j)|$. Since $\chi_j(1)=[L:\CB_L(s_j)]_{r'}$, it follows that $\chi_j$ and $\alpha_j$ are of $p$-defect zero. Also, 
as $s_j \in \CB_L(s_j)$, $s_j$ is a $p'$-element, which implies that $\chi_j$ is $p$-rational. To ensure that \eqref{for-p2} holds for 
$\alpha:=\alpha_j$, by \cite[Proposition 4.3]{MT1} and the discussion right after \eqref{for-p2}, it suffices to choose $s_i$ so that 
\begin{equation}\label{for-p4}
  \mbox{when }p|f,~s_{1,2}\mbox{ is not }\GC^*\mbox{-conjugate to any element in }(\GC^*)^{\sigma^*}=\GC^*(\FF_{q^{1/p}}).
\end{equation}  
To verify \eqref{for-p4}, we will frequently use the stronger condition that $|s_i| \nmid |\GC^*(\FF_{q^{1/p}})|$ for $i=1,2$.

We will now construct the elements $s_{1,2}$, first in the case $S = \PSL_n(q)$ with $n \geq 3$ and $(n,q) \neq (3,2)$ (note
that $\SL_3(2) \cong \PSL_2(7)$ was already considered). If $(n,q)=(6,2)$, we choose $s_1$ of order $2^6-1$ and $s_2$ of order $2^5-1$, so that 
$|\CB_L(s_1)|=63$ and $|\CB_L(s_2)|=31$, so \eqref{for-p3} and \eqref{for-p4} both hold. 
Suppose $n=3$. Then we choose $s_1$ with $|s_1|=7$ if $(n,q) = (3,4)$, and $|s_1|=\ell_1=\ppd(r,3f) > 3$ otherwise, so $|s_1|$ is coprime 
to $|\ZB(L)|=\gcd(3,q-1)$ and $|\CB_L(s_1)|=(q^3-1)/(q-1)$. 
Next, choose $s_2 \in \SL_2(q) < L$ with $|s_2|=q+1$ which is coprime to $|\ZB(L)|$ and $|\CB_L(s_2)|=q^2-1$.  Then \eqref{for-p3} holds, 
and \eqref{for-p4} holds as well (since when $p|f$, the order of any semisimple element in $\GC^*(\FF_{q^{1/p}})=\SL_3(q^{1/p})$ is at most 
$(q^{3/p}-1)/(q^{1/p}-1) < q+1$ and coprime to $\ell_1$).
In the remaining cases we choose $s_1$ of order $\ppd(r,nf)$, so that $|\CB_L(s_1)|=(q^n-1)/(q-1)$, and $s_2$ with
$|s_2|=\ppd(r,(n-1)f)$ if $(q,n) \neq (4,4)$ and $|s_2|=7$ if $(q,n)=(4,4)$, so that $|\CB_L(s_2)|=q^{n-1}-1$. Then  \eqref{for-p3} holds, 
and \eqref{for-p4} holds as well (since when $p|f$, $|s_i|$ does not divide $|\GC^*(\FF_{q^{1/p}})|=|\SL_n(q^{1/p})|$).

Suppose $S = \PSU_n(q)$ with $n \geq 3$ and $(n,q) \neq (3,2)$.
Suppose first that $n=3$. Then we choose $s_1$ with $|s_1|=\ell_1=\ppd(r,6f) > 3$, so $|s_1|$ is coprime 
to $|\ZB(L)|=\gcd(3,q+1)$ and $|\CB_L(s_1)|=(q^3+1)/(q+1)$. 
Next, choose $s_2 \in \SU_2(q) < L$ with $|s_2|=q-1$ which is coprime to $|\ZB(L)|$ and $|\CB_L(s_2)|=q^2-1$.  Then \eqref{for-p3} holds, 
and \eqref{for-p4} holds as well (since when $p|f$, the order of any semisimple element in $\GC^*(\FF_{q^{1/p}})=\SU_3(q^{1/p})$ is at most 
$q^{2/p}-1 < q-1$ and coprime to $\ell_1$).
In the remaining cases we choose $s_1$ with $|s_1|=\ppd(r,2nf)$ if $2\nmid n$, 
$|s_1|=\ppd(r,nf)$ if $4|n$, and $|s_1|=\ppd(r,nf/2)$ if $n \equiv 2 \pmod4$
so that $|\CB_L(s_1)|=(q^n-(-1)^n)/(q+1)$. We also choose $s_2$ with $|s_2|=\ppd(r,2(n-1)f)$ if $2|n$ and $(n,q) \neq (4,2)$,
$|s_2|=9$ if $(n,q)=(4,2)$,
$|s_2| =\ppd(r,(n-1)f)$ if $4|(n-1)$, $|s_2|=\ppd(r,(n-1)f/2)$ if $n \equiv 3 \pmod4$ and $(n,q) \neq (7,2)$, 
and $|s_2|=63$ if $(n,q)=(7,2)$, so that $|\CB_L(s_2)|=q^{n-1}+(-1)^n$. Then  \eqref{for-p3} holds, 
and \eqref{for-p4} holds as well (again since when $p|f$, $|s_i|$ does not divide $|\GC^*(\FF_{q^{1/p}})|=|\SU_n(q^{1/p})|$).

Assume $S = \Sp_{2n}(q)$ with $2|q$, $n \geq 2$, and $(n,q) \neq (2,2)$ (note that $\Sp_4(2)' \cong \AAA_6$). Then we can choose
$s_1$ with $|s_1|=|\CB_L(s_1)|=q^n-1$ and $s_2$ with $|s_2|=|\CB_L(s_2)|=q^n+1$, which fulfill both \eqref{for-p3} and \eqref{for-p4}.

Next suppose $S=\PSp_4(q)$ with $2 \nmid q$ and $q \geq 9$ (note that $\PSp_4(3) \cong \SU_4(2)$ was already considered). Then 
we take $s_2$ with $|s_2|=(q^2+1)/2$ and $|\CB_L(s_2)|=q^2+1$.
Assume in addition that $q-1$ or $q+1$ is a $2$-power. Then either $q=9$, or $q=r \geq 17$ is a Fermat prime, or $q=r \geq 31$ is a Mersenne prime, whence
\eqref{for-p4} holds vacuously. To fulfill \eqref{for-p3}, we choose
$s_1 \in \SL_2(q) \times \SL_2(q) < L$ with $|s_1|=(q+1)/2$ and $|\CB_L(s_1)|=(q+1)^2$ in the first two cases, and $|s_1|=(q-1)/2$ and $|\CB_L(s_1)|=(q-1)^2$ in the
third case. Now we may assume that neither $q-1$ nor $q+1$ is a $2$-power. Choosing $s_1$ with $|s_1|=(q^2-1)_{2'}$, we have  
$|\CB_L(s_1)|=q^2-1$, which fulfills \eqref{for-p3}. We also note that $|s_2|$ is divisible by $\ppd(r,4f)$, and $|s_1|$ is divisible by $\ppd(r,f)\cdot\ppd(r,2f)$, and hence 
\eqref{for-p4} holds.

Now let $S=\PSp_{2n}(q)$ or $\Omega_{2n+1}(q)$ with $2 \nmid q$, $n \geq 3$, and $(n,q) \neq (4,3)$. Then 
we take $s_2$ with $|s_2|=\ppd(r,2nf)$, so that $|\CB_L(s_2)|=q^n+1$. We also choose $s_1$ with $|s_1|=(q^n-1)_{2'}$ which is divisible by $\ppd(r,nf)$, and also by 
$(q^{n/2}-1)_{2'} \geq 3$ when $2|n$ (as we assume $(n,q) \neq (4,3)$), whence $|\CB_L(s_1)|=q^n-1$. Then one readily checks that both 
\eqref{for-p3} and \eqref{for-p4} hold.

Assume $S=\PO^+_{2n}(q)$ with $n \geq 4$. Then we choose $s_2$ with $|s_2|=\ppd(r,2(n-1)f)$, so that $|\CB_L(s_2)|=(q^{n-1}+1)(q+1)$. 
If $2 \nmid n$, we choose $s_1$ of order $(q^n-1)_{2'}$ which is divisible by $\ppd(r,nf)$, so that $|\CB_L(s_1)|=q^n-1$.
If $2|n$, we choose $s_1$ of order $(q^{n-1}-1)_{2'}$, which is divisible by $\ppd(r,(n-1)f)$ unless $(n,q)=(4,2)$, so that $|\CB_L(s_1)|=(q^{n-1}-1)(q-1)$.
In both cases, one checks that both \eqref{for-p3} and \eqref{for-p4} hold.

Assume $S=\PO^-_{2n}(q)$ with $n \geq 4$. Then we choose $s_2$ with $|s_2|=\ppd(r,2nf)$, so that $|\CB_L(s_2)|=q^{n}+1$. 
If $2 \nmid n$, we choose $s_1$ of order $\ppd(r,2(n-1)f)$, so that $|\CB_L(s_1)|=(q^{n-1}+1)(q-1)$.
If $2|n$, we choose $s_1$ of order $(q^{n-1}-1)_{2'}$, which is divisible by $\ppd(r,(n-1)f)$ unless $(n,q)=(4,2)$, so that $|\CB_L(s_1)|=(q^{n-1}-1)(q+1)$.
In both cases, one checks that both \eqref{for-p3} and \eqref{for-p4} hold.

\smallskip
(c) Finally, we handle the exceptional groups of Lie type. Then we use \cite[Lemma 2.3]{MT} to choose $s_i$ of prime order $\ell_i$, $i=1,2$, as follows.
If $S=\tw2 B_2(q)$ with $q=2^f \geq 8$, then $\ell_1= \ppd(2,4f)$, $\ell_2=\ppd(2,f)$, $|\CB_L(s_1)|= q \pm \sqrt{2q}+1$, and $|\CB_L(s_2)|=q-1$.
If $S=\tw2 G_2(q)$ with $q=3^f \geq 27$ (so $p \geq 5$), then $\ell_1= \ppd(3,6f)$, $\ell_2=\ppd(3,f)$, $|\CB_L(s_1)|= q \pm \sqrt{3q}+1$, and $|\CB_L(s_2)|=q-1$.
If $S=\tw2 F_4(q)$ with $q=2^f \geq 8$, then $\ell_1= \ppd(2,12f)$, $\ell_2=\ppd(2,6f)$, $|\CB_L(s_1)|= q^2+q+1 \pm\sqrt{2q}(q+1)$, and $|\CB_L(s_2)|=q^2-q+1$.
If $S=G_2(q)$ with $q \geq 5$, then $\ell_1= \ppd(r,6f)$, $\ell_2=\ppd(r,4f)$, $|\CB_L(s_1)|= \Phi_6(q)$, and $|\CB_L(s_2)|=\Phi_3(q)$.
If $S=\tw3 D_4(q)$ with $q \geq 3$, then $\ell_1= \ppd(r,12f)$, $\ell_2=\ppd(r,6f)$, $|\CB_L(s_1)|= \Phi_{12}(q)$, and $|\CB_L(s_2)|$ is coprime to $|\CB_L(s_1)|$.
If $S=F_4(q)$, then $\ell_1= \ppd(r,12f)$, $\ell_2=\ppd(r,8f)$, $|\CB_L(s_1)|= \Phi_{12}(q)$, and $|\CB_L(s_2)|=\Phi_8(q)$.
If $S=E_8(q)$, then $\ell_1= \ppd(r,30f)$, $\ell_2=\ppd(r,24f)$, $|\CB_L(s_1)|= \Phi_{30}(q)$, and $|\CB_L(s_2)|=\Phi_{24}(q)$.
If $S=E_6(q)$, then $\ell_1= \ppd(r,9f)$, $\ell_2=\ppd(r,12f)$, $|\CB_L(s_1)|= \Phi_9(q)$, and $|\CB_L(s_2)|=\Phi_{12}(q)\Phi_3(q)$.
If $S=\tw2 E_6(q)$, then $\ell_1= \ppd(r,18f)$, $\ell_2=\ppd(r,12f)$, $|\CB_L(s_1)|= \Phi_{18}(q)$, and $|\CB_L(s_2)|=\Phi_{12}(q)\Phi_6(q)$.
Lastly, suppose $S=E_7(q)$, then we have $\ell_1= \ppd(r,14f)$ and $|\CB_L(s_1)|= q^7+1$ by \cite[Lemma 2.3]{MT}. The proof of \cite[Theorem 4.2]{HSTZ}
yields $s_2$ with $\ell_2= \ppd(r,7f)$ and $|\CB_L(s_2)|= q^7-1$. In all cases, one checks that both \eqref{for-p3} and \eqref{for-p4} hold.
\end{proof}


\section{Proof of Theorem A and of Theorem C}

\begin{lem}\label{lem5}
  Let $N$ be a normal subgroup of a finite group $G$, and let $P\in \mathrm{Syl}_{p}(G)$. 
  Assume that $p\nmid [G:N]$.
  For every $\theta\in \mathrm{Irr}(N)$ and every $\chi\in \mathrm{Irr}(G|\theta)$,
  $\theta$ is an irreducible constituent of $(1_P)^N$ with $p'$-multiplicity
  if and only if $\chi$ is an irreducible constituent of $(1_P)^G$ also with $p'$-multiplicity.
\end{lem}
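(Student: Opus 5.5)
Since $p\nmid [G:N]$, we have $P\le N$, because $PN/N$ is a $p$-subgroup of $G/N$, which has $p'$-order. So $(1_P)^N$ makes sense, and $[\chi_P,1_P]=[(\chi_N)_P,1_P]$. The plan is to compute $\chi_N$ by Clifford theory and compare multiplicities.

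First, by Clifford's theorem, $\chi_N=e\sum_{i=1}^t\theta_i$, where $\theta_1=\theta,\dots,\theta_t$ are the distinct $G$-conjugates of $\theta$ and $t=[G:I_G(\theta)]$. Both $e$ and $t$ divide $[G:N]$: for $t$ this is clear, and for $e$ it follows from \cite[Corollary 11.29]{Is}, since $e$ divides $[I_G(\theta):N]$. Hence $e$ and $t$ are coprime to $p$. This gives
$$[\chi_P,1_P]=e\sum_{i=1}^t[(\theta_i)_P,1_P].$$

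The key step is to show that all summands are equal to $[\theta_P,1_P]$. Write $\theta_i=\theta^{g_i}$. Then $[(\theta^{g})_P,1_P]=[\theta_{P^{g^{-1}}},1_{P^{g^{-1}}}]$. Now $P^{g^{-1}}$ is a Sylow $p$-subgroup of $N$ (as $P\le N$ is Sylow in $N$ and $N\nor G$), so $P^{g^{-1}}=P^n$ for some $n\in N$. Since $\theta$ is a class function of $N$, $[\theta_{P^n},1_{P^n}]=[\theta_P,1_P]$. Therefore
$$[\chi_P,1_P]=et\,[\theta_P,1_P],$$
with $et$ coprime to $p$.

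The equivalence now follows. The number $[\theta_P,1_P]$ is nonzero and coprime to $p$ if and only if $et[\theta_P,1_P]$ is nonzero and coprime to $p$. By Frobenius reciprocity, these two quantities are exactly the multiplicities of $\theta$ in $(1_P)^N$ and of $\chi$ in $(1_P)^G$, which gives the stated equivalence. The only point requiring care is the conjugation step, where we use that $P$ is Sylow in $N$ rather than merely in $G$, so that its $G$-conjugates are already $N$-conjugates. The rest is routine.
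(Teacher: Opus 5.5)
Your proof is correct and is essentially the paper's own argument. Both use the Clifford decomposition $\chi_N=e\sum_{i=1}^t\theta_i$ with $p\nmid et$ (from Corollary 11.29 of Isaacs), and the fact that $G$-conjugates of $P$ are already $N$-conjugates, so each $\theta_i$ occurs in $(1_P)^N$ with the same multiplicity as $\theta$. This gives $[\chi,(1_P)^G]=et\,[\theta,(1_P)^N]$, from which the equivalence follows.
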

\begin{proof}
  By Clifford's theorem, $\chi_N=e(\theta_1+\cdots +\theta_t)$, where $\theta_1=\theta$ and $\theta_i=\theta^{x_i}$ for some $x_i\in G$.
  Since $p\nmid [G:N]$, it follows that $p\nmid et$ by Corollary 11.29 of \cite{Is} and all $G$-conjugates of $P$ are contained in $N$.
  In particular, $P^{x_i^{-1}}=P^{y_i}$ for some $y_i\in N$.
  So,
  \[
    [\theta_i,(1_P)^N]= [\theta^{x_i},(1_P)^N]=[\theta,((1_P)^{x_i^{-1}})^N]=[\theta,((1_P)^{y_i})^N]=[\theta,(1_P)^N].         
  \]
  In particular, $[\chi,(1_P)^G]=[\chi_N,(1_P)^N]=[e\sum_{i=1}^t \theta_i,(1_P)^N]=et\cdot [\theta,(1_P)^N]$.
  Consequently, $p\nmid [\theta,(1_P)^N]$ if and only if $p\nmid[\chi,(1_P)^G]$.
\end{proof}

We now proceed to prove Theorem~A and Theorem~C at the same time. 

\begin{proof}[Proof of Theorem~A and of Theorem~C]
  If $P$ is a normal subgroup of $G$, then by Clifford's theorem  every  character $\chi \in \irr G$ that
  lies over $1_P$ contains $P$ in its kernel. As $p$ does not divide the order of the  factor group $G/P$,
  clearly  $(3)$ implies $(1)$.

  \bigskip
  To see that $(1)$ implies $(2)$, as in~\cite{MN} we can observe that every $|P|$-th root of unity
  of the ring  $R$ of algebraic integers is congruent to $1$ mod $pR$. Hence, for $\chi \in \irr G$ and
  $x \in P$, $\chi(x) \equiv \chi(1) \pmod{pR}$. Since $\chi(1) \not\in pR$, it follows that $\chi(x) \neq 0$. 

  \bigskip
  In order to prove that (2) implies (3)
   we assume, working by contradiction, that $G$ is a finite group of  smallest possible order
  such that
  a Sylow $p$-subgroup $P$ of $G$ is not normal in $G$ (in particular, $P \neq 1$), and such that $\chi(x) \neq 0$ for all  $x\in P$ and for all $p$-rational characters $\chi \in
  \irr G$ lying over $1_P$ (for $p=2$: such that $2 \nmid [\chi_P, 1_P]$). (In the following, we will write between brackets the additional conditions that, for $p=2$, we need to consider in order to prove Theorem~C).

  Clearly $G$ is not the trivial group, so it has a minimal normal subgroup $N$.
  Since the $p$-rational irreducible characters of $G/N$ lying over $1_{PN/N}$  lift to $p$-rational irreducible characters of $G$ lying over $1_P$ (and, for $p=2$: multiplicities are preserved), the minimality of $G$ implies that $NP \nor G$ and that $\oh pG = 1$. 

  \medskip
  We first suppose that $N$ is solvable. So $N$ is an elementary abelian group of order coprime to $p$. 
  Let $M/N$ be a chief factor of $G$ with $M \leq NP$. As $\oh pG =1$, the abelian $p$-group $M/N$ acts faithfully on $N$ and hence on the dual group $\irr N$. Therefore, by Brodkey's theorem (\cite[Corollary 3.4]{I2}) there exists a 
  $\lambda \in \irr N$ such that $I_M(\lambda) = N$.  Let $L := \ker{\lambda}$, $I := I_G(\lambda)$ and $T := I \cap NP$. Then $L \nor T$ and $T \nor I$. As $N \leq T$, we have $T = N P_0$
  where $P_0 := T \cap P$ is a Sylow $p$-subgroup of $T$.
   As $T/L = N/L \times LP_0/L$, $\mu := \lambda \times 1_{LP_0/L} \in \irr{T/L}$.
   Let $\widehat{\lambda}$ be the lift of $\mu$ to $T$.
  By~\cite[Corollary 6.27]{Is},  $\widehat{\lambda}$ is the only extension of $\lambda$ to $T$ with determinantal order coprime to $p$. It follows that $\widehat{\lambda}$ is $I$-invariant and hence $\ker{\widehat{\lambda}} = LP_0 \nor I$.
  So, for any  character  $\xi \in \irr{I|\widehat{\lambda}}$, we have  $LP_0 \leq \ker{\xi}$ and hence $\xi$ is $p$-rational since $I/LP_0$ is a $p'$-group.
  Moreover, as $\xi$ lies over $\widehat{\lambda}$, $\xi$ lies both  over  $\lambda$ and $1_{P_0}$.
  (For $p=2$: $\xi_T = \xi(1)\widehat{\lambda}$, so $[\xi_{P_0}, 1_{P_0}] =[(\xi_T)_{P_0}, 1_{P_0}] = \xi(1)[\widehat{\lambda}_{P_0}, 1_{P_0}] =
  \xi(1)$ is odd.)
  By Clifford correspondence $\chi := \xi^G \in \irr G$  and $\chi$ is  $p$-rational as $\Q(\chi) \subseteq \Q(\xi)$.
  Setting $K := INP = IP$, by Mackey's formula  $(\xi^K)_P = (\xi_{P_0})^P$ and hence $\xi^K$ lies over  $1_P$  because $\xi$ lies over $1_{P_0}$. Therefore, $\chi = (\xi^K)^G$ lies over $1_P$.
  (For $p=2$: setting $\psi = \xi^K$,  $  [\psi_P, 1_P] = [(\xi_{P_0})^P, 1_P] = [\xi_{P_0}, 1_{P_0}] = \xi(1)$ is odd.
 Let now $\delta$ be an irreducible constituent of $\psi_{NP}$.
  As $NP \unlhd K$, by Lemma~\ref{lem5} $[\delta_P,1_P]$ is odd. Hence, as $NP \unlhd G$ and $\chi$ lies over $\delta$,
  another application of Lemma~\ref{lem5} yields that $[\chi_P, 1_P]$ is odd.)
  But $\chi$ lies also over $\lambda$ and hence over $\theta = \lambda^M$, because $\lambda^M  \in \irr M$ as $I_M(\lambda) = N$. Hence, by the induction formula $\theta(y) = 0$ for all elements $y \in M$ such that $y \not\in N$. Since $1 \neq|M/N|$ is a $p$-power, there exists  a nontrivial $p$-element $x \in P\cap M$.
 Thus $x^g\not\in N$ and $\theta^{g^{-1}}(x) = \theta(x^g) =0$ for all elements $g \in G$. As $\chi_M$ is a sum
  of $G$-conjugates of $\theta$, we get the contradiction $\chi(x) = 0$.
\newline

\begin{figure}[htbp]
  \centering
\begin{tikzcd}[baseline=-0.5ex]
                                          & K \arrow[rd, no head]                                         &                            &                         &   \\
I \arrow[rd, no head] \arrow[ru, no head] &                                                               & NP \arrow[rrdd, no head]   &                         &   \\
                                          & T \arrow[ld, no head] \arrow[rd, no head] \arrow[ru, no head] &                            &                         &   \\
N \arrow[rd, no head]                     &                                                               & LP_0 \arrow[rd, no head] &                         & P \\
                                          & L \arrow[rd, no head] \arrow[ru, no head]                   &                            & P_0 \arrow[ru, no head] &   \\
                                          &                                                               & 1 \arrow[ru, no head]      &                         &  
\end{tikzcd}
\end{figure}
\bigskip
Let us now assume that $N$ is non-solvable. First, we consider the case when $|N|$ is not divisible by the prime $p$ and hence, by Feit--Thompson's theorem, $p \neq 2$.

Write $N = S_1 \times S_2 \times \cdots \times S_n$, where $S_i =  S_1^{g_i}$ for
suitable $g_i \in G$, $1 \leq i \leq n$ (say $g_1=1$), and $S_1$ is a non-abelian simple group of order coprime to $p$.
Let  $W := NP$ and let  

$$K := \bigcap_{i=1}^n \norm W{S_i}$$
be the kernel of the (not necessarily transitive) permutation action of $W$ on the set
$\Omega = \{ S_1, S_2, \ldots, S_n\}$.
So, $ N \leq K \nor W$.
As $W/K$ is a $p$-group and $p \neq 2$, by~\cite[Corollary 5.7(b)]{MW} there exists a subset
$\Delta$ of $\Omega$ such that the setwise stabilizer of $\Delta$
in $W/K$ is trivial.
We can assume that $\Delta = \{S_1, \ldots, S_m\}$, for some $m<n$.
Let now $A = \norm W{S_1}/\cent{W}{S_1}$. Then $A$ is an almost simple group with socle $S$ isomorphic to $S_1$
and $ A/S$ is a $p$-group. 
By~\cite[Lemma 2.6]{MT},  there exist two distinct $A/S$-regular orbits in $\irr S$. Let $\alpha$ and $\beta$ be representatives of such orbits, seen as characters of $S_1$ via the natural isomorphism between $S$ and $S_1$.

We consider the following irreducible character $\theta$ of $N$
$$\theta =  \alpha^{g_1} \times \cdots \times \alpha^{g_m} \times \beta^{g_{m+1}} \times  \cdots \times \beta^{g_n}\,.$$
We claim that $I_W(\theta) = N$. 
In fact, if  $y \in W$ fixes $\theta$, then by considering  $W$ as a subgroup of the wreath product $A \wr (W/K)$ 
one sees that $y$ stabilizes the set $\Delta$, as the  characters $\alpha$ and $\beta$ lie in different $A$-orbits.
So, we conclude that $y \in K$.
Then, since $I_{\norm W{S_i}}(\alpha^{g_i})=N\cent W{S_i}$ for $1\leq i\leq m$
and $I_{\norm W{S_i}}(\beta^{g_i}) = N\cent W{S_i}$ for $m+1 \leq i \leq n$, we deduce that
    $$y \in  \bigcap_{i= 1}^n N\cent W{S_i} = N.$$
    Thus, $I_W(\theta) = N$ and hence $\tau := \theta^W$ is a $p$-defect zero irreducible  character of $W$.
    By Lemma~\ref{Lprat} there exists a $p$-rational character $\chi \in \irr G$ such that $\chi$ lies over $\tau$.
    As $W \nor G$, by Clifford theorem $\chi_W$ is a sum of conjugate characters $\tau^g\in \irr W$, for suitable elements $g \in G$. 
    Since by Lemma~\ref{Ldef0} $\tau$ lies over $1_P$ and $\tau(x) = 0$ for every nontrivial element $x \in P$,
    $\chi$ also lies over $1_P$ and $\chi(x) = 0$ for every nontrivial element $x \in P$.
    As $P \neq 1$, we have a contradiction.  
    
    \bigskip
    Let us finally assume that $N$ is non-solvable and that $p$ divides $|N|$.
    Again, we write  $N = S_1 \times S_2 \times \cdots \times S_n$,
    where $S := S_1$ is a non-abelian simple group such that $p$ divides $|S|$ and $S_i=S^{g_i}$ for some $g_i\in G$, $i = 1, 2, \ldots, n$.
  Let $D=\mathbf{N}_{G}(S)$,  $C=\mathbf{C}_{G}(S)$ and $E = SC = S \times C$.
  Then $D/C$ is an almost simple group with socle $E/C$, isomorphic to $S$.
  Since $N \leq E$ and $G/N$ has a normal Sylow $p$-subgroup, then $D/E$  has a normal Sylow $p$-subgroup as well.

  We now state the following

  \medskip
  ({\bf Claim}):  there exist a character $\alpha \in \mathrm{Irr}(E/C)$ and  $xC\in E/C$, with
  $x$ a $p$-element of  $S$, 
  such that $\alpha^{\varsigma}(xC)=0$ for all $\varsigma \in \mathrm{Aut}(E/C)$, and a $p$-rational character
  $\beta \in \mathrm{Irr}(J/C)$ lying above both $\alpha$ and  $1_{QC/C}$, where $J/C:=I_{D/C}(\alpha)$ and $Q\in \mathrm{Syl}_{p}(J)$ (for $p=2$: such that $[\beta_{QC/C}, 1_{QC/C}]$ is odd).

  \medskip

  In order to prove the claim, we set $\overline{D} := D/C$ and we use the bar convention (so, $\overline{E} \cong S$). 
  We start by assuming  that $\overline{E}$ satisfies part (ii) of Theorem~\ref{simple1}. So, there exists a $p$-defect zero character
  $\alpha \in \mathrm{Irr}(\overline{E})$ such that, for $\overline{J}:=I_{\overline{D}}(\alpha)$,
  $\overline{T}/\overline{E}$ the normal Sylow $p$-subgroup of $\overline{J}/\overline{E}$ and
  $\overline{Q}\in \mathrm{Syl}_{p}(\overline{J})$,
 there exists a $p$-rational character $\gamma \in \mathrm{Irr}(\overline{T})$ such that $\gamma$ extends $\alpha$ and $\gamma$
  lies over $1_{\overline{Q}}$. (Moreover, for $p=2$:  $[\gamma_{\overline{Q}}, 1_{\overline{Q}}]$ is odd).
  By Schur-Zassenhaus' theorem there exists a complement  $\overline{H}/\overline{E}$ of $\overline{T}/\overline{E}$ in  $\overline{J}/\overline{E}$ and  by Lemma~\ref{Lprat}
  there exists a $p$-rational  character $\eta \in \irr{\overline{H}}$ that lies over $\alpha$.
 Now, by~\cite[Lemma 6.8]{N} the  restriction of characters of $\overline{J}$ to $\overline{H}$ defines a bijection from the set of the irreducible characters of $\overline{J}$ lying over $\gamma$ to the set of the irreducible characters of $\overline{H}$ lying over $\alpha$. 
Let now $\beta\in \irr{\overline{J}}$ be the unique character lying over $\gamma$ such that $\beta_{\overline{H}} = \eta$.
Thus, if $\sigma$ is a Galois automorphism that fixes both $\gamma$ and $\eta$, then 
$\sigma$ fixes also $\beta$. Since both $\gamma$ and $\eta$ are $p$-rational, we conclude that
$\beta$ is $p$-rational as well.
Moreover, $\beta$ lies over $\gamma$ and $\gamma$ lies over $1_{\overline{Q}}$, so $\beta$ lies over $1_{\overline{Q}}$.
(For $p=2$: $\overline{T}$ is a normal subgroup of odd index of $\overline{J}$ and $[\gamma_{\overline{Q}}, 1_{\overline{Q}}]$ is odd,
so by Lemma~\ref{lem5} $[\beta_{\overline{Q}}, 1_{\overline{Q}}]$ is odd).
Finally, let $x \in S$ be any nontrivial $p$-element; such an element exists as $p$ divides $|S|$.
For all $\varsigma \in \mathrm{Aut}(\overline{E})$,
$\alpha^{\varsigma}$ is a  $p$-defect zero character of $\overline{E}$ and hence  $\alpha^{\varsigma}(\overline{x}) =0$. 

If $\overline{E}$ satisfies part (i) of Theorem~\ref{simple1} and $p \neq 2$, then the Claim clearly holds. 
We can now assume that $p=2$ and we observe that if there exists a character $\alpha \in \mathrm{Irr}(\overline{E})$
of $2$-defect zero, then by Lemma~\ref{Lgab}(iii) $\overline{E}$ satisfies part (ii) of Theorem~\ref{simple1} and hence we are done by the preceding paragraph.
On the other hand, the Claim clearly holds if $\overline{E} = \mathrm{Aut}(\overline{E})$. 
Then by \cite[Corollary 2]{GO} and \cite{atlas} we can assume that $\overline{E}$ is either an alternating group of degree at least $7$ or
$\overline{E} \in\mathcal{L} = \{M_{12}, M_{22}, J_2, HS, Suz \}$.
In particular,   $[\overline{D}:\overline{E}] \leq 2$. If $\overline{E}$ is an alternating group of degree at least $7$, then we are done by \cite[Theorem 3.17]{GLLV}.
Finally, if $\overline{E}\in \mathcal{L}$, then using \cite{GAP} one verifies, as in Lemma~\ref{small}, that
there exist a rational character  $\alpha \in \mathrm{Irr}(\overline{E})$ and an element $\overline{x} \in \overline{P}$ with $x\in S$ a $p$-element, where $\overline{P} \in \syl p{\overline{E}}$, such that $[\alpha_{\overline{P}}, 1_{\overline{P}}]$ is odd and $\alpha(\overline{y}) = 0$ for every $\overline{y} \in \overline{E}$ with $|\overline{y}| = |\overline{x}|$. Hence, another application of Lemma~\ref{Lgab}(iii) finishes the proof of the Claim. 

\medskip

 In the following, we  adopt the notation of the  Claim. We first observe that $\beta$ lies over $1_Q$ and that, without loss of generality, we may
  assume $Q\leq P$.

  By seeing $\alpha$ as a character of $E = S \times C$, we write $\alpha= \alpha_1 \times 1_C$, with $\alpha_1\in\mathrm{Irr}(S)$,  and
  we define  $$\theta= \alpha_N = \alpha_1 \times 1_{S_2}\times \cdots \times 1_{S_n}$$
  and $I = I_G(\theta)$.
  We observe that every element $g \in I$ fixes also $\mathrm{ker}(\theta) = S_2 \times \cdots \times S_n$ and hence
  $g\in D$. 
  It follows that  $I = I_D(\theta) = I_{D}(\alpha_1)=I_D(\alpha)=J$.
  We see now  $\beta$ as an irreducible character of $I = J$ and observe that $\beta$ lies over $\theta$.
  Thus, Clifford's correspondence  yields that
  $\chi:=\beta^G\in \mathrm{Irr}(G)$.
  Moreover, since $\beta$ is $p$-rational, $\chi$ is also $p$-rational.
  
  We now set $X=I NP$.
  Observe that $N\leq I$, and so $X=IP$.
  As $Q \leq P$, then $Q = I \cap P$ and by~Mackey's formula  $(1_Q)^I = ((1_P)_{P\cap I})^I = ((1_P)^X)_I$ and hence 
  \[
  [\beta^{X},(1_P)^{X}] = [\beta,((1_P)^{X})_I] = [\beta,(1_Q)^{I}]= [\beta_Q, 1_Q] =[\beta_{QC/C}, 1_{QC/C}]  
\]
is  nonzero. 
  We conclude that  $\chi=(\beta^X)^G$ lies above $1_P$.
  (For $p=2$: setting $\psi = \beta^X$, again by Mackey's formula  $[\psi_P, 1_P]=[(\beta_Q)^{P},1_P]= [\beta_Q, 1_Q] =  [\beta_{QC/C}, 1_{QC/C}]$ is odd. Let now $\mu$ be an irreducible constituent of $\psi_{NP}$.
  As $NP \unlhd X$, by Lemma~\ref{lem5} $[\mu_P,1_P]$ is odd. Hence, as $NP \unlhd G$ and $\chi$ lies over $\mu$,
  another application of Lemma~\ref{lem5} yields that   $[\chi_P, 1_P]$ is odd.) 
  
  However, $\chi$ lies also over $\theta$ and hence by Clifford's theorem
  $\chi_N$ decomposes as a sum of $G$-conjugate characters $\theta^g$, with $g \in G$.
  For $g \in G$, there exist  $\varsigma \in \mathrm{Aut}(S)$ and $i \in \{1,\ldots, n\}$, such that
  $$\theta^g = \theta^{\varsigma g_i} = \alpha_1^{\varsigma g_i} \times 1_{N_i}$$
  where $N_i = \prod_{j\neq i}S_j$. 
  As $\alpha_1^{\varsigma g_i}(x^{g_i}) = \alpha_1^{\varsigma}(x) =\alpha^{\varsigma}(xC) = 0$ for every $\varsigma \in\mathrm{Aut}(S) = \mathrm{Aut}(E/C)$,  
we deduce that $\chi$  vanishes on the $p$-element $\prod_{i=1}^t x^{g_i}$, the final contradiction.
\end{proof}

\section{Proof of Theorem B }

Before proving Theorem B, we show a direct consequence of Theorem \ref{simple2}.

\begin{prop}\label{cor, simple2}
Let $A$ be an almost simple group with socle $S$.
Assume that $A/S$ has a normal Sylow $2$-subgroup.
Then there exist  an irreducible character $\alpha$ of $S$ and a $2$-element $x \in S$ such that 
$\alpha^\varsigma(x)=0$ for all $\varsigma \in \Aut(S)$ and  
a strongly real irreducible character $\beta$ of $I=I_{A}(\alpha)$ such that $\beta$  lies over both $\alpha$ and  $1_P$, where $P$ is a Sylow $2$-subgroup of $I$. 
\end{prop}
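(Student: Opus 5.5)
We apply Theorem~\ref{simple2} to $S$ and treat its two alternatives separately, imitating the proof of the Claim in Section~4 (the strongly real analogue of that argument). Throughout, write $T/S$ for the normal Sylow $2$-subgroup of $A/S$.

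\emph{Case (i) of Theorem~\ref{simple2}.} There are $\alpha\in\Irr(S)$ and a $2$-element $x\in S$ with $\alpha^\varsigma(x)=0$ for all $\varsigma\in\Aut(S)$. Moreover, $\alpha$ extends to a strongly real character $\hat\beta$ of $I_{\Aut(S)}(\alpha)$ lying over a Sylow $2$-subgroup of that group. Set $I=I_A(\alpha)=A\cap I_{\Aut(S)}(\alpha)$ and take $\beta:=\hat\beta_I$. This restriction is irreducible, since it extends $\alpha$, and it is strongly real, since an orthogonal representation stays orthogonal on restriction. The only issue is whether $\beta$ still lies over $1_P$ for $P\in\Syl_2(I)$. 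Choose $P\le \hat P\in\Syl_2(I_{\Aut(S)}(\alpha))$. Then $\hat\beta_{\hat P}\ni 1_{\hat P}$ gives $\beta_P\ni 1_P$, because a $\hat P$-fixed vector is $P$-fixed. So case (i) is immediate.

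\emph{Case (ii) of Theorem~\ref{simple2}.} Apply (ii) to the almost simple group $T$, which has $T/S$ a $2$-group. This gives $\alpha\in\Irr(S)$ of $2$-defect zero and a strongly real extension $\gamma$ of $\alpha$ to $I_T(\alpha)$ lying over $1_{P_0}$, with $P_0\in\Syl_2(I_T(\alpha))$. Since $\alpha^\varsigma$ again has $2$-defect zero, $\alpha^\varsigma(x)=0$ for any nontrivial $2$-element $x\in S$. Now set $I=I_A(\alpha)$. Then $I_T(\alpha)=I\cap T$ is normal in $I$ with odd index, and $P_0$ is a Sylow $2$-subgroup of $I$, so $P=P_0$.

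We need a strongly real $\beta\in\Irr(I)$ over $\gamma$, which then automatically lies over $1_P$. Let $H/S$ be a complement of $(I\cap T)/S$ in $I/S$ (Schur--Zassenhaus). Since $|H/S|$ is odd and $\alpha$ is real, the Gallagher/odd-index theory should give a real extension of $\alpha$ to $H$. Concretely, $\alpha$ is $H$-invariant of coprime index, so by Isaacs' canonical extension (\cite[Corollary 6.28]{Is}: unique extension with $\det$ of order dividing $\alpha(1)o(\det\alpha)$) there is a canonical extension $\eta$. Uniqueness forces $\bar\eta=\eta$, so $\eta$ is real.

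Then \cite[Lemma 6.8]{N} gives a unique $\beta\in\Irr(I|\gamma)$ with $\beta_H=\eta$. Complex conjugation fixes $\gamma$ and $\eta$, hence fixes $\beta$, so $\beta$ is real. To upgrade real to strongly real, note that $\beta_{I\cap T}=\gamma$: the bijection is via extensions, since $\gamma$ is $I$-invariant by uniqueness. Then $\gamma$ has Frobenius--Schur indicator $1$, and the indicator of the real irreducible $\beta$ must be $1$ as well, because a symplectic representation restricts to a symplectic one, and $\gamma$ is irreducible and orthogonal.

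The main obstacle is this last part of case (ii): checking that $\gamma$ is $I$-invariant so that the Lemma~6.8 correspondence yields an extension, and securing the realness of the odd-index extension $\eta$. If the canonical-extension argument is awkward, an alternative is available. Note that $\alpha(1)$ is even, and use \cite[Lemma 2.4(iii)]{T}-type reasoning: $\beta$ lies over $1_P$ with odd multiplicity, since $[\gamma_P,1_P]$ is odd by Lemma~\ref{Lgab}(iii) and $\beta$ extends $\gamma$. Together with realness, this forces strong reality via the permutation module $(1_P)^I$.
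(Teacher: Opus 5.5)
Your case (i) is correct and is the same as the paper's argument. Case (ii) has a genuine gap, located at the two points you yourself flagged.

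\textbf{The $I$-invariance of $\gamma$.} You want $\beta\in\Irr(I)$ with $\beta_{I\cap T}=\gamma$. This forces $\gamma$ to be $I$-invariant, because the restriction of a character of $I$ to a normal subgroup is $I$-invariant. You justify invariance ``by uniqueness'', but Theorem~\ref{simple2}(ii) gives no uniqueness.
\begin{itemize}
\item[(a)] Every $\gamma\lambda$ with $\lambda\in\Irr((I\cap T)/S)$ of order at most $2$ is again a strongly real extension of $\alpha$, and several of these may lie over $1_P$.
\item[(b)] The odd-order group $I/(I\cap T)$ acts on this set and need not fix the $\gamma$ you were handed. For example, if $I/S\cong\AAA_4\le\mathrm{Out}(\PO^+_8(q))$ with $q$ odd, a $3$-element permutes the three nontrivial $\lambda$'s transitively.
\end{itemize}
If $\gamma$ is not $I$-invariant, no character of $I$ restricts to it, so your construction breaks down. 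You are trying to prove more than is needed, namely an extension of $\alpha$ to all of $I$.

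\textbf{The extension $\eta$.} The canonical extension of $\alpha$ to the complement $H$ is also unjustified. \cite[Corollary 6.28]{Is} requires $\gcd([H:S],\alpha(1))=1$, but only $\alpha(1)_2$ is controlled. For instance, take $S=\PSL_2(13^3)$ and a cuspidal character of degree $q-1$ attached to a torus character of order $7$. It is real, of $2$-defect zero, and invariant under the field automorphism of order $3$, yet $3\mid q-1$.

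\textbf{The missing ingredient.} It is the odd-index lemma that the paper quotes as \cite[Lemma 2.1(ii)]{MT1}: a strongly real irreducible character of a normal subgroup of odd index lies under a strongly real irreducible character of the whole group. Applied to $I_T(\alpha)\unlhd I_A(\alpha)$, it gives $\beta$ directly, and $\beta$ lies over $\alpha$ and $1_P$ because it lies over $\gamma$. No complement, no $\eta$ and no \cite[Lemma 6.8]{N} are needed.

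\textbf{Proving it yourself.} Work in $J=I_I(\gamma)$ rather than in $I$.
\begin{itemize}
\item[(a)] Since $\bar\gamma=\gamma$, the cohomology class of the triple $(J,I\cap T,\gamma)$ equals its own inverse. Its order divides the odd number $|J:I\cap T|$, so it is trivial and $\gamma$ extends to $J$.
\item[(b)] Squaring is bijective on the odd-order group of linear characters of $J/(I\cap T)$. Hence exactly one Gallagher extension $\hat\gamma\mu$ is real.
\item[(c)] Your Frobenius--Schur argument shows this real extension is strongly real.
\item[(d)] Its induction to $I$ is irreducible by the Clifford correspondence and is still strongly real.
\end{itemize}
The same argument with $J=H$ is the correct way to obtain a real $\eta$. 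Your closing ``alternative'' only addresses strong reality of an already-constructed $\beta$, so it does not close the gap.
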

\begin{proof}
  Assume first that part (i) of  Theorem \ref{simple2} holds for $S$. 
  Then $S$ admits an irreducible character $\alpha$ and a $2$-element $x \in S$ such that $\alpha^\varsigma(x) = 0$ for all $\varsigma \in \Aut(S)$, and $\alpha$ extends to a strongly real character $\gamma$ of $T := I_{\Aut(S)}(\alpha)$ lying above $1_Q$, where $Q \in \mathrm{Syl}_2(T)$. Let $I := T \cap A = I_A(\alpha)$. 
  Without loss of generality, we may choose $Q$ such that $P := I \cap Q$ is a Sylow $2$-subgroup of $I$. 
  Consequently, the restriction $\beta := \gamma_I \in \mathrm{Irr}(I)$ is a strongly real character lying above both $\alpha$ and $1_P$.

  Assume next that part (ii) of Theorem \ref{simple2} holds for $S$. 
  Let $H/S \in \mathrm{Syl}_2(A/S)$ and observe that $H \unlhd A$. 
  Then there exists a character $\alpha \in \mathrm{Irr}(S)$ of $2$-defect zero such that $\alpha$ extends to a strongly real character $\gamma$ of $I := I_H(\alpha)$ lying above $1_P$, where $P \in \mathrm{Syl}_2(I)$. 
  Set $T = I_A(\alpha)$. 
  Since $[T : I]$ is odd, $P$ is also a Sylow $2$-subgroup of $T$ and, 
  as $\gamma$ lies over $\alpha$ and $1_P$, every irreducible character of $T$ lying over $\gamma$ must also lie over $\alpha$ and $1_P$. 
  Moreover, by~\cite[Lemma 2.1(ii)]{MT1} there exists a strongly real character $\beta \in \mathrm{Irr}(T)$ such that $\beta$ lies over $\gamma$,
  and consequently over both $\alpha$ and $1_P$.
\end{proof}

Now, we are ready to prove Theorem B.

\begin{proof}[Proof of Theorem B]
  As in  the proof of Theorem~A and Theorem~C, it follows easily that  (3) implies (1) and that (1) implies (2).
  
We show  that (2) implies (3).
Working by induction on $|G|$, we can assume that $G$ has a unique minimal normal subgroup  $N$,
$N$ is not a $2$-group, and that $NP$ is a normal subgroup of $G$, $N < PN$.

We assume first that $|N|$ is odd. 
Hence,  $N$ is an elementary abelian $q$-group for some odd prime $q$. 
Let $K/N$ be a chief factor of $G$,  with  $K \leq NP$. So, $K/N$ is an elementary abelian $2$-group and
$\mathbf{O}_2(K) \leq \mathbf{O}_2(G) = 1$.
By Corollary~3.4 of~\cite{I2} and Theorem~2.5 of~\cite{MT1}, there exists a character $\nu \in \irr N$ and
an element $y\in K$ such that $\nu^y = \overline{\nu}$ and  $\tau := \nu^K \in \mathrm{Irr}(K)$.
We observe that $\tau$ is a character of $2$-defect zero of $K$, and that $\overline{\tau} = (\overline{\nu})^K = (\nu^y)^K = \nu^K = \tau$. 
Hence, by Lemma~\ref{triv}(i) $\tau$ is strongly real and by Lemma~\ref{Ldef0}  $[\tau_Q, 1_Q]$ is odd, where
$Q$ is a Sylow $2$-subgroup of $K$.

Now, let $M := NP = KP$,  $I = I_{M}(\tau)$ and $Q_1=I\cap P$.
We remark that  $Q_1$ is a Sylow $2$-subgroup of $I$,  because $IP=M$ and hence $[I:Q_1]=[M:P]$. 
Applying Lemma \ref{Lgab}(iii) to $(I,K,\tau)$, we deduce that $\tau$ extends to a strongly real character $\phi \in \mathrm{Irr}(I)$ such that $[\phi, (1_{Q_1})^I]$ is odd. Consequently, the induced character $\psi := \phi^{M} \in \mathrm{Irr}(M)$ is also strongly real. 
Moreover,$$[\psi, (1_P)^{M}] = [\phi^{M}, (1_P)^{M}] = [\phi, ((1_P)^{M})_I] = [\phi, (1_{Q_1})^I]$$is odd. 
In particular, $\psi$ lies above $1_P$.
Since $[G : M]$ is odd, \cite[Lemma 2.1(ii)]{MT1} implies that there exists a (unique) strongly real character $\chi \in \mathrm{Irr}(G)$ lying above $\psi$. 
So $\chi$ also lies above $1_P$. 
Finally, by Clifford's theorem $\chi_K$ is a sum of $G$-conjugates of $\tau$. Because $\tau$ has $2$-defect zero in $K$, it follows that $\chi$ vanishes on every nontrivial $2$-element in $K$, which contradicts (2).

\medskip
  Assume now that $|N|$ is even.
  Then (as $G$ has no non-trivial normal subgroups of $2$-power order)
  $N=S_1\times \cdots \times S_t$, where $S := S_1$ is a non-abelian simple group and $S_i=S^{g_i}$ for some $g_i\in G$, $i = 1, 2, \ldots, t$.
  Let $D=\mathbf{N}_{G}(S)$,  $C=\mathbf{C}_{G}(S)$ and $E = SC = S \times C$.
  Then $D/C$ is an almost simple group with socle $E/C$, isomorphic to $S$.
  Since $N \leq E$ and $G/N$ has a normal Sylow $2$-subgroup, then $D/E$  has a normal Sylow $2$-subgroup as well.
  Applying Proposition~\ref{cor, simple2} to $D/C$, 
  we deduce that there exist a character $\alpha \in \mathrm{Irr}(E/C)$ and an element  $xC\in E/C$, with
  $x$ a $2$-element of  $S$, 
  such that $\alpha^{\varsigma}(xC)=0$ for all $\varsigma \in \mathrm{Aut}(E/C)$, and a strongly real character
  $\beta \in \mathrm{Irr}(J/C)$ lying above both $\alpha$ and  $1_{QC/C}$, where $J:=I_D(\alpha)$ and $Q\in \mathrm{Syl}_{2}(J)$.
  We observe that, in particular, $\beta$ lies over $1_Q$ and that, without loss of generality, we may
  assume $Q\leq P$.
  By lifting, we write $\alpha= \alpha_1 \times 1_C$, with $\alpha_1\in\mathrm{Irr}(S)$,  and
  we define  $$\theta= \alpha_N = \alpha_1 \times 1_{S_2}\times \cdots \times 1_{S_t}$$
  and $I = I_G(\theta)$.
  We observe that every element $g \in I$ fixes also $\mathrm{ker}(\theta) = S_2 \times \cdots \times S_t$ and hence
  $g\in D$. 
  It follows that  $I = I_D(\theta) = I_{D}(\alpha_1)=I_D(\alpha)=J$.
  By seeing $\beta$ as an irreducible character of $I$, Clifford's correspondence then  yields that
  $\chi:=\beta^G\in \mathrm{Irr}(G)$.
  Moreover, since $\beta$ is strongly real, $\chi$ is also strongly real.
  
  We now set $X=I NP$.
  Observe that $N\leq I$, and so $X=IP$.
  As $Q = I \cap P$, by~Mackey's formula  $(1_Q)^I = ((1_P)_Q)^I = ((1_P)^X)_I$ and hence 
  \[
   [\beta^{X},(1_P)^{X}]=[\beta,((1_P)^{X})_I]=[\beta,(1_Q)^{I}]\neq 0.
  \] 
  We conclude that $\beta^X$, and hence $\chi=\beta^G$ lies above $1_P$.
  However, $\chi$ lies also over $\theta$ and hence by Clifford's theorem
  $\chi_N$ decomposes as a sum of $G$-conjugate characters $\theta^g$, for suitable $g \in G$.
  For any given $g \in G$, there exist a $\varsigma \in \mathrm{Aut}(S)$ and an  $i \in \{1,\ldots, t\}$, such that
  $$\theta^g = \theta^{\varsigma g_i} = \alpha_1^{\varsigma g_i} \times 1_{N_i}$$
  where $N_i = \prod_{j\neq i} S_j$. 
  As $\alpha_1^{\varsigma g_i}(x^{g_i}) = \alpha_1^{\varsigma}(x) = 0$ for every $\varsigma \in\mathrm{Aut}(S)$,  
we deduce that $\chi$  vanishes on the $2$-element $\prod_{i=1}^t x^{g_i}$, a contradiction.
 \end{proof}

\end{document}